\documentclass[12pt,a4paper]{amsart}
\usepackage{mathtools,amssymb,amsmath,amsopn,amsthm,graphicx,MnSymbol,centernot,stmaryrd,array}
\usepackage{tikz-cd}
\usepackage{enumitem}
\usepackage{amsaddr}
\usepackage{etoolbox}
\patchcmd{\subsection}{-.5em}{.5em}{}{}
\usetikzlibrary{matrix}
\addtolength\textwidth{1 in}
\addtolength\hoffset{-.5 in}
\begin{document}

\newtheorem{definition}{Definition}[subsection]
\newtheorem{definitions}[definition]{Definitions}
\newtheorem{deflem}[definition]{Definition and Lemma}
\newtheorem{lemma}[definition]{Lemma}
\newtheorem{proposition}[definition]{Proposition}
\newtheorem{theorem}[definition]{Theorem}
\newtheorem{corollary}[definition]{Corollary}
\newtheorem{cors}[definition]{Corollaries}
\theoremstyle{remark}
\newtheorem{rmk}[definition]{Remark}
\theoremstyle{remark}
\newtheorem{remarks}[definition]{Remarks}
\theoremstyle{remark}
\newtheorem{notation}[definition]{Notation}
\theoremstyle{remark}
\newtheorem{example}[definition]{Example}
\theoremstyle{remark}
\newtheorem{examples}[definition]{Examples}
\theoremstyle{remark}
\newtheorem{dgram}[definition]{Diagram}
\theoremstyle{remark}
\newtheorem{fact}[definition]{Fact}
\theoremstyle{remark}
\newtheorem{illust}[definition]{Illustration}
\theoremstyle{remark}
\newtheorem{que}[definition]{Question}
\theoremstyle{definition}
\newtheorem{conj}[definition]{Conjecture}
\newtheorem{scho}[definition]{Scholium}
\newtheorem{por}[definition]{Porism}
\DeclarePairedDelimiter\floor{\lfloor}{\rfloor}

\renewenvironment{proof}{\noindent {\bf{Proof.}}}{\hspace*{3mm}{$\Box$}{\vspace{9pt}}}

\author[Gupta, Kuber, Sardar]{Esha Gupta, Amit Kuber \and Shantanu Sardar}
\address{Department of Mathematics and Statistics\\Indian Institute of Technology, Kanpur\\Uttar Pradesh, India}
\email{eshag@iitk.ac.in, askuber@iitk.ac.in, ssardar@iitk.ac.in}
\title{{On the stable radical of some non-domestic string algebras}}
\keywords{string algebra, non-domestic, radical, stable rank, bridge quiver}
\subjclass[2010]{16G30, 16S90}

\begin{abstract}
We introduce the concept of a prime band in a string algebra $\Lambda$ and use it to associate to $\Lambda$ its finite bridge quiver. Then we introduce a new technique of `recursive systems' for showing that a graph map between finite dimensional string modules lies in its stable radical. Further we study two classes of non-domestic string algebras in terms of some connectedness properties of its bridge quiver. `Meta-$\bigcup$-cyclic' string algebras constitute the first class that is essentially characterized by the statement that each finite string is a substring of a band. Extending this class we have `meta-torsion-free' string algebras that are characterized by a dichotomy statement for ranks of graph maps between string modules--such maps either have finite rank or are in the stable radical. Their stable ranks can only take values from $\{\omega,\omega+1,\omega+2\}$.
\end{abstract}

\maketitle

\newcommand\A{\mathcal{A}}
\newcommand\C{\mathcal{C}}
\newcommand\D{\mathcal{D}}
\newcommand\HH{\mathcal{H}}
\newcommand\K{\mathcal{K}}
\newcommand\LL{\mathcal{L}}
\newcommand\M{\mathrm{M}}
\newcommand\Q{\mathcal{Q}}
\newcommand\T{\mathcal{T}}
\newcommand\U{\mathcal{U}}
\newcommand{\N}{\mathbb{N}} 
\newcommand{\R}{\mathbb{R}}
\newcommand{\Z}{\mathbb{Z}}
\newcommand{\bb}{\mathfrak b}
\newcommand{\uu}{\mathfrak u}
\newcommand{\vv}{\mathfrak v}
\newcommand{\ww}{\mathfrak w}
\newcommand{\xx}{\mathfrak x}
\newcommand{\yy}{\mathfrak y}
\newcommand{\zz}{\mathfrak z}
\newcommand{\ZZ}{\mathfrak Z}
\newcommand{\MM}{\mathfrak M}
\newcommand{\mm}{\mathfrak m}
\newcommand{\modfp}{\mathrm{mod}\mbox{-}}
\newcommand{\rad}[1]{\mathrm{rad}_{\Lambda}^{#1}}
\newcommand{\la}{l}
\newcommand{\La}{L}
\newcommand{\ra}{r}
\newcommand{\Ra}{R}
\newcommand{\lb}{\Bar{l}}
\newcommand{\Lb}{\Bar{L}}
\newcommand{\rb}{\Bar{r}}
\newcommand{\Rb}{\Bar{R}}
\newcommand{\sBa}{\sigma^{\mathrm{Ba}}}
\newcommand{\brac}[2]{\langle #1,#2\rangle}
\newcommand{\st}{\mathrm{st}}
\newcommand{\rk}{\mathrm{rk}}
\newcommand{\Zg}{\mathrm{Zg}(\Lambda)}
\newcommand{\Zgs}{\mathrm{Zg_{str}}(\Lambda)}
\newcommand{\STR}[1]{\mathrm{Str}(#1)}
\newcommand{\dmod}{\mbox{-}\operatorname{mod}}
\newcommand{\Ba}[1]{\mathrm{Ba}(#1)}
\newcommand{\PBa}[1]{\mathrm{PrBa}(#1)}

\section{Introduction}
One of the classical problems of representation theory is to classify all representations of a finite dimensional algebra $\Lambda$ and morphisms between them. The introduction of the Auslander-Reiten (AR) quiver which described the category $\mathrm{mod}\mbox{-}\Lambda$ of finite dimensional right $\Lambda$-modules modulo $\rad\omega$ was an important step in this direction; here $\rad{}$ stands for the radical of $\mathrm{mod}\mbox{-}\Lambda$. Butler and Ringel \cite{BR} classified the vertices of the AR quiver of a string algebra into two classes, namely the string modules and the band modules, as well as its arrows into four classes. This was taken forward by Schr\"{o}er in \cite{SchroerInfRadMod} where he computed the rank, i.e., the least ordinal $\lambda$ such that $\rad\lambda=0$, for any domestic string algebra. He showed that $\rad{\omega(n+2)}=0$, where $n$ is the maximal length of a path in its bridge quiver. He also proved that the rank of a non-domestic string algebra is not defined. This raises the question of computing the stable rank of these algebras, i.e., the smallest ordinal $\lambda$ such that $\rad\lambda = \rad{\lambda+1}$--the corresponding power of the radical is called the stable radical. The notion of a finite bridge quiver that played a crucial role in the analysis of the domestic case was unavailable in the non-domestic case.

Another well-studied Morita invariant associated to a finite dimensional algebra is its Ziegler spectrum whose points are the (isomorphism classes of) indecomposable pure-injective modules and whose topology is generated by a basis of open sets consisting of sets of pure-injective modules which do not lie in the kernel of a pp-functor. The Ziegler spectrum is quasicompact and its isolated points are precisely the vertices of the AR quiver. Ringel provided with a list of indecomposable pure-injective modules for a domestic string algebra and conjectured that it was a complete list. After various attempts the conjecture was settled in the affirmative by Prest and Puninski in \cite{PP}. Further with Laking \cite{LPP} they computed the Cantor-Bendixson (CB) rank of each point in the Ziegler spectrum of a domestic string algebra which equals its Krull-Gabriel (KG) dimension, leading to \cite[Theorem~10.19]{LPP} which states that the KG dimension of a domestic string algebra is equal to $(n+2)$, where $n$ is the length of the longest path in its bridge quiver.

In this paper we study the connections between some graph-theoretic properties of the bridge quiver of a string algebra and its representation-theoretic properties. We introduce the notion of a prime band and show that the bridge quiver constructed using only prime bands is finite. This bridge quiver coincides with the existing notion of bridge quiver for the domestic case. Furthermore, as in the domestic case, we show that the paths in this bridge quiver ``generate'' all the strings (Lemma \ref{generateallstrings}) although in the non-domestic case there could be multiple paths generating the same string. In this sense the bridge quiver ``linearizes'' the strings.

Borrowing the terminology from group theory we introduce the class of torsion-free string algebras in which, loosely speaking, we demand that every string, which can be extended to a strictly longer string, be extendable to a string indexed by $\Z$. The same requirement imposed on finite directed paths in the bridge quiver leads to the class of meta-torsion-free string algebras.

The bridge quiver of a domestic string algebra is acyclic \cite[p.44]{SchroerThesis}. At the other end of the spectrum we introduce the class of meta-$\bigcup$-cyclic string algebras characterized by strong connectedness of each connected component of their bridge quiver. In the torsion-free case such string algebras are characterized by extensibility of its strings to bands (Theorem \ref{purenondomeqdef}). The main result about these algebras (Theorem \ref{PNDmain}) states that the rank of any graph map between string modules, which does fnot lie in the stable radical, is finite.

The search for the converse to this result leads to the larger class of meta-torsion-free string algebras which are completely characterized (Theorem \ref{EPNDmain}) by the conclusion of Theorem \ref{PNDmain}. Further the analysis of graph maps involving band modules yields bounds on the stable rank of such string algebras--Corollary \ref{mtfrk} states that the stable rank of a meta-torsion-free string algebra can only take values amongst $\omega, \omega+1$ and $\omega+2$. It is also illustrated in Examples \ref{allranks} that all the three values are attained.

A new tool we introduce to show that a graph map lies in the stable radical is that of a `recursive term'--they are a compact version of factorizable systems described in \cite{MorsBetwnFinDimInfDimRep}. While a factorizable system talks about a countable collection of indecomposable modules and morphisms between them satisfying certain conditions, a recursive term condenses these conditions into a single equation. Here a `term' is a label of a graph map that is invariant under the change of a base string--this notion is introduced in \cite{SK} by the second and the third author.

The paper is organised as follows. We recall some basic definitions related to string algebras in \S\ref{stralg} whereas \S\ref{repth} is devoted to a basic introduction of string modules, band modules and graph maps between them. We recall the arithmetic of ordinal-indexed powers of the radical in \S\ref{rad}. In \S\ref{Hammock} we introduce terms with reference to hammock posets. Having covered the relevant background, we turn to define prime bands in \S\ref{primeband} and show that there are only finitely many prime bands as well as band-free strings in any string algebra. Using this finiteness property we go on to define the bridge quiver in \S\ref{primebridge}. \S\ref{roottrees} deals with showing that the bridge quiver is sufficient to talk about all the strings in the string algebra. We end the section by introducing meta-$\bigcup$-cyclic string algebras and proving some immediate results about them. We introduce recursive systems in \S\ref{recsystem} and demonstrate the existence of one in each non-domestic string algebra in \S\ref{casemetaucyclic}. The characterization of meta-torsion-free string algebras in terms of ranks of graph maps between string modules is proved in \S\ref{metatorsionfree}. We close the paper by making a conjecture about the stable rank of a special biserial algebra in \S\ref{fut}.
\subsection*{Acknowledgements} This work is supported by the \emph{Council of Scientific and Industrial Research (CSIR)} India - Research Grant No. 09/092(0951)/2016-EMR-I.

\section{Background}\label{background}
\subsection{Fundamentals of string algebras}\label{stralg}
Fix an algebraically closed field $k$. A \emph{quiver} is a directed graph possibly with parallel arrows as well as loops. We will use the quadruple $\Q = (Q_0, Q_1, s, t)$ to denote a quiver, where $Q_0$ is the set of vertices and $Q_1$ is the set of arrows, and where $s,t:Q_1\to Q_0$ denote the source and target functions. We will always use small roman letters $v,w$ to denote the vertices and $a,b,c,d$ to denote arrows of a quiver. Let us denote by $Q_1^-$ the collection, for each $b\in Q_1$, of the corresponding capital roman letter $B$. We treat $B$ as a path in $\Q$ where the arrow $b$ is traced in the reverse direction, and thus $s(B)=t(b)$ and $t(B)=s(b)$. Moreover we will use the Greek letters $\alpha,\beta,\gamma$ to denote syllables, i.e., a small or capital roman letter. For each $v\in Q_0$, we denote the lazy path (of length $0$) at $v$ by $1_{(v,1)}$ and its inverse by $1_{(v,-1)}$; clearly the source and the target of both of them is the vertex $v$.

A \emph{relation} on a quiver $\Q$ is a finite $k$-linear combination of paths with the same source and target. We use the notation $\rho$ to denote the set of relations, so that the algebra corresponding to the pair $(\Q,\rho)$ is given by $k\Q/\langle\rho\rangle$, where $\langle\rho\rangle$ is the ideal generated by $\rho$ in the path algebra $k\Q$ that is generated as a $k$-algebra by paths of non-negative length in $\Q$.

\begin{definition}
A \emph{string algebra} is an algebra $\Lambda$ presented with the help of a finite quiver $(\Q,\rho)$ where the following conditions are satisfied.
\begin{itemize}
\item[(a)] $\rho$ consists of monomials only.
\item[(b)] There are only finitely many paths in $\mathcal{Q}$ which do not have a subpath in $\langle\rho\rangle$.
\item[(c)] Any vertex of $Q_0$ is the source of at most two arrows, and the target of at most two arrows.
\item[(d)] For any arrow $b$, there is at most one arrow $c$ with $s(c)= t(b)$ and $cb \notin\rho$ and at most one arrow $a$ with $t(b)= s(a)$ and $ba \notin\rho$.
\end{itemize}
\end{definition}

\begin{examples}\label{Ex}
Here are two well-known families of string algebras.
\begin{enumerate}
    \item $\Lambda_2$ :    
    \begin{tikzcd}
    v_1 \arrow[r, "a", bend left] \arrow[r, "b"', bend  right] & v_2 \arrow[r, "c"] & v_3 \arrow[r, "d", bend left] \arrow[r, "e"', bend right] & v_4
    \end{tikzcd}
    with $\rho = \{cb, dc\}$.
    
    In fact this can be generalized to $\Lambda_n$ for all $n \geq 1$ as
    \begin{center}
    \begin{tikzcd}
    v_1 \arrow[r, "a_1", bend left] \arrow[r, "b_1"', bend right] & v_2 \arrow[r, "c_1"'] & v_3 \arrow[r, "a_2", bend left] \arrow[r, "b_2"', bend right] & v_4 \arrow[rr, dotted] &  & v_{2n-1} \arrow[r, "a_n", bend left] \arrow[r, "b_n"', bend right] & v_{2n}
    \end{tikzcd}
    \end{center}
    with $\rho = \{c_ib_i , b_{i+1}c_i \ 1 \leq i \leq n-1\}$
    \item $GP_{n,m}$ for $n, m \geq 2$:
    \begin{tikzcd}
    v \arrow["b"', loop, distance=2em, in=35, out=325] \arrow["a"', loop, distance=2em, in=215, out=145]
    \end{tikzcd}
    with $\rho = \{a^n, b^m, ab, ba\}$. This family was originally studied by Gelfand and Ponomarev.
\end{enumerate}
\end{examples}

\begin{definition}
A \emph{string} $\uu$ is either a lazy path at a vertex, its inverse or a finite word $\alpha_n\hdots\alpha_2\alpha_1$ of syllables satisfying
\begin{itemize}
\item $s(\alpha_{i+1})=t(\alpha_i)$ for $1\leq i \leq n-1$;
\item $\alpha_i$ and $\alpha_{i+1}$ are not inverses of each other for $1\leq i \leq n-1$;
\item no subpath (or its inverse) of $\uu$ is in $\rho$.
\end{itemize}
\end{definition}
We use the notation $\STR\Lambda$ to denote the collection of all finite strings of $\Lambda$ and small fractal letters to denote strings. We extend the definition of the source and target functions to all strings as $s(\uu)=s(\alpha_1)$ and $t(\uu)=t(\alpha_n)$.

Given two strings $\uu,\vv$ such that $s(\vv)=t(\uu)$, we say that the concatenation (written juxtaposition) $\vv\uu$ exists if the word $\vv\uu$ is a string.
\begin{definition}
Say that a string $\bb=\alpha_n\hdots\alpha_2\alpha_1$, $n>1$, is \emph{cyclic} if $t(\alpha_n)=s(\alpha_1)$. Say a cyclic string $\bb$ is a \emph{band} if all its powers $\bb^n$ under concatenation, $n\geq 1$, exist, $\alpha_1\in Q_1^{-1}$, $\alpha_n\in Q_1$, and if $\bb$ is not a power of any of its proper substrings, i.e., $\bb$ is primitive.
\end{definition}
We use the notation $\Ba\Lambda$ to denote the collection of all bands of $\Lambda$ up to cyclic permutation of its syllables, and the small fractal letter $\bb$ to denote a band. 

Say that a string algebra is \emph{domestic} if there are only finitely many bands otherwise say that it is \emph{non-domestic}.

\begin{examples}
    The only bands in $\Lambda_2$ from Examples \ref{Ex} are $aB, bA, dE, eD$. Since there are only finitely many bands, $\Lambda_2$ is a domestic string algebra.
    
    Now consider $GP_{2,3}$ from Examples \ref{Ex}. The strings $\bb_1 = aB$ and $\bb_2 = aB^2$ are bands. Observe that the string $\bb_1^{n_1}\bb_2^{m_1}\bb_1^{n_2}\bb_2^{m_2}\hdots \bb_1^{n_k}\bb_2^{m_k}$ is a band for all non-recurring sequences of positive integers $(n_1,m_1,n_2,m_2,\hdots ,n_k,m_k)$. Hence $GP_{2,3}$ is a non-domestic string algebra. In fact for all $n,m \geq 2,n+m\geq 5$, the algebra $GP_{n,m}$ is non-domestic.
\end{examples}

A \emph{left $\N$-string} is an infinite word $\hdots\alpha_2\alpha_1$ such that for any $n\in\N$, the word $\alpha_n\hdots\alpha_1$ is a string. A left $\N$-string in a domestic string algebra is an almost periodic string, i.e., is of the form $\prescript{\infty}{}{\bb}\uu$ for some primitive cyclic word $\bb$ and some finite string $\uu$ such that the composition $\bb\uu$ is defined, whereas such a statement is not true for non-domestic string algebras.

For technical reasons we choose and fix two arbitrary maps $\sigma,\varepsilon:Q_1\to\{-1,1\}$ satisfying the following conditions.
\begin{itemize}
\item[(a)] If $b_1\neq b_2$ are arrows with $s(b_1)= s(b_2)$, then $\sigma(b_1)= -\sigma(b_2)$.
\item[(b)] If $c_1\neq c_2$ are arrows with $t(c_1)= t(c_2)$, then $\varepsilon(c_1)= -\varepsilon(c_2)$.
\item[(c)] If $b, c$ are arrows with $s(b)= t(c)$ and $bc\notin\rho$, then $\sigma(b)= -\varepsilon(c)$.
\end{itemize}

The functions can be extended to all strings as follows: if $b$ is an arrow, set $\sigma(B):= \varepsilon(b)$, $\varepsilon(B):= \sigma(b)$; if $\uu=\alpha_n\hdots\alpha_1$ is a string of length $n\geq 1$, set $\sigma(\uu):= \sigma(\alpha_1)$ and $\varepsilon(\uu)= \varepsilon(\alpha_n)$. Finally define $\sigma(1_{(v,i)})= -i$ and $\varepsilon(1_{(v,i)})=i$. The composition $1_{(v,i)}\uu$ is defined for $\uu$ satisfying $t(\uu)=v$ provided $\varepsilon(\uu)=i$; in this case we say that $1_{(v,i)}\uu=\uu$. Similarly the composition $\uu1_{(v,i)}$ is defined for $\uu$ satisfying $s(\uu)=v$ provided $\sigma(\uu)=-i$; in this case we say that $\uu1_{(v,i)}=\uu$.

Henceforth the term `string algebra' will always mean a quiver with relations together with a choice of $\sigma$ and $\varepsilon$ maps. A string algebra is an artin algebra.

\begin{example}\label{Ex2}
A possible choice of $\sigma$ and $\varepsilon$ maps for $GP_{n,m}$ from Examples \ref{Ex} is $$\sigma(a)=\varepsilon(b)=1, \ \sigma(b)=\varepsilon(a)=-1.$$
\end{example}

\subsection{Classification of representations of string algebras}\label{repth}
Let $\Lambda$ be a string algebra. We can associate a `string module' $\M(\vv)$ to each string $\vv\in \STR\Lambda$ and a `band module' $\mathrm B(\bb,n,\lambda)$ to each triplet $(\bb,n,\lambda)$ where $\bb$ is a band, $n\in\mathbb Z^+$ and $\lambda\in k^*$. The details of the construction are omitted here and can be found in \cite[\S~2.3]{LakingThesis}. For distinct strings $\vv,\vv'$ we have $\M(\vv)\cong \M(\vv')$ if and only if $\vv'=\vv^{-1}$. Similarly $\mathrm B(\bb,n,\lambda) \cong \mathrm B(\bb',n',\lambda')$ if and only if $\bb'$ is a cyclic permutation of $\bb$ or $\bb^{-1}$, $n=n'$ and $\lambda=\lambda'$. Furthermore no string module is isomorphic to a band module.

In fact (the isomorphism classes of) the modules listed in the above paragraph are the only indecomposable finite dimensional $\Lambda$-modules, a result essentially due to \cite{GP}. This is a crucial step towards classification of finite dimensional $\Lambda$-modules. Let $\modfp \Lambda$ denote the category of finite dimensional right $\Lambda$-modules and module homomorphisms between them. A well-studied invariant associated to an artin algebra is its Auslander-Reiten (AR) quiver. It has  as its vertices the isomorphism classes of finite dimensional indecomposable $\Lambda$-modules and as its arrows the irreducible morphisms between such modules. 

Schr\"{o}er described a basis for the $k$-vector space $\mathrm{Hom}_\Lambda(M,N)$, where $M,N$ are vertices in the AR Quiver. The elements of these bases are called \emph{graph maps} between the corresponding modules. Say that a graph map is of type SB if its source is a string module and its target is a band module. Similarly we define when a graph map is of type BS, SS and BB. Below we recall a few details and more can be found in \cite[\S~2.4]{LakingThesis} and \cite{SchroerThesis}.

Let $\uu = \alpha_n \hdots \alpha_2\alpha_1 \in \STR \Lambda$. A substring $\vv = \alpha_i \hdots \alpha_j (1 \leq j \leq i \leq n)$ of $\uu$ is called an \emph{image substring} of $\uu$ if either $\alpha_{i+1}$ is inverse or $i = n$, and also either $\alpha_{j-1}$ is direct or $j = 1$. Dually a \emph{factor substring} of $\uu$ is defined.

\begin{example}
Continuing with Examples \ref{Ex} consider the string $DecaB$ in $\Lambda_2$. Then $ec$ is an image substring of $DecaB$ and $ca$ is a factor substring of $DecaB$.
\end{example}

If $\vv$ is an image substring of $\uu$, then the canonical map $\M(\vv)\to \M(\uu)$ is a monomorphism. Dually if $\vv$ is a factor substring of $\ww$, then the canonical map $\M(\ww)\to \M(\vv)$ is an epimorphism. A graph map $f : \M(\ww)\to \M(\uu)$ is a composition of such two morphisms and $\vv$ will be referred to as the string associated to $f$.

Let $\vv$ be a finite factor substring of $^\infty\bb^\infty$ for a band $\bb$. It is possible to define a set of canonical maps from $\mathrm B(\bb,n,\lambda)\to\M(\vv)$ indexed by $\mathrm{Hom}_k(k^n,k)$. A graph map $\mathrm B(\bb,n,\lambda) \to \M(\uu)$ is simply the composition of a canonical map to $\M(\vv)$ with a graph map $\M(\vv)\to\M(\uu)$, and $\vv$ will be referred to as the string associated to such a graph map. Dually we can describe graph maps from string modules to band modules.

For bands $\bb,\bb'$, let $\vv$ be a finite string that is a factor substring of $^\infty\bb^\infty$ and an image substring of $^\infty\bb'^\infty$. It is again possible to define a set of canonical maps from $\mathrm B(\bb,n,\lambda)\to\mathrm B(\bb',n',\lambda')$ indexed by $\mathrm{Hom}_k(k^n,k^{n'})$. Such canonical maps are graph maps between these band modules, and we refer to $\vv$ as the string associated to such a graph map. In case $\bb=\bb'$ and $\lambda=\lambda'$, then there are more graph maps induced by the basis elements of the $\mathrm{Hom}$-set between $(k^n,\lambda)$ and $ (k^{n'},\lambda)$ thought of as $k[T,T^{-1}]$-modules.

\subsection{Stable rank of a module category}\label{rad}
Recall that a two-sided \emph{ideal} $I$ of a category $\mathcal{C}$ is a class of morphisms of $\mathcal{C}$ such that $\alpha_1\phi, \phi\alpha_2 \in I$, whenever $\phi \in I$, $\alpha_1,\alpha_2$ are morphisms of $\mathcal{C}$ and the compositions are defined.
\begin{definition}
Let $\Lambda$ be an artin $k$-algebra. The radical of the category $\modfp\Lambda$, denoted $\rad{}$, is a two-sided ideal of $\modfp\Lambda$ generated by the non-invertible morphisms between indecomposable finite dimensional $\Lambda$-modules.
\end{definition}
To understand the structure of transfinite compositions of morphisms in the radical we define its powers indexed by ordinals. 
\begin{definition}
First set $\rad0 := \modfp\Lambda$. For $n\in \N\setminus\{0\}$, $\rad n$ denotes the ideal generated by all compositions of $n$ morphisms in $\rad{}$. For a limit ordinal $\nu$, define $$\rad\nu := \bigcap\{ \rad\lambda \mid \lambda < \nu \}.$$
If $\nu = \lambda + n$, where $n \in \N\setminus\{0\} $ and $\lambda$ is a limit ordinal, then define $$\rad\nu := (\rad\lambda)^{n+1}.$$ Finally set $$\rad\infty := \bigcap\{\rad\nu\mid \nu \mbox{ is an ordinal}\}.$$
\end{definition}
It is clear from the above definition that $$\rad0 \supseteq \rad1 \supseteq \rad2 \hdots\rad\omega\supseteq\rad{\omega+1}\hdots \supseteq \rad\infty.$$ This sequence has to eventually stabilize which motivates the following definition.
\begin{definition}
The \emph{stable rank} of $\modfp\Lambda$, denoted $\st(\Lambda)$, is the smallest ordinal $\nu$ such that $\rad\nu =\rad{\nu+1}$, and the \emph{stable radical} is the ideal $\rad{\st(\Lambda)}$.
\end{definition}
Schr\"{o}er \cite[Theorem~2,\,Theorem~3]{SchroerInfRadMod} showed that a string algebra is domestic if and only if $\rad{\st(\Lambda)}=0$. In fact, he also shows that \cite[Theorem~3]{SchroerInfRadMod} $\st(\Lambda)<w^2$ for a domestic string algebra. 

Transfinite powers of the radical allow us to partition the set of morphisms in $\modfp\Lambda$.
\begin{definition}
Given a morphism $f$ in $\modfp\Lambda$, set $\rk(f) := \nu$ if $f \in \rad\nu \setminus \rad{\nu + 1}$ and $\rk(f) := \infty$ if $f \in \rad\infty$.
\end{definition}

The following lemma helps us to do arithmetic of ranks and will be a key step for finding the rank of a `recursive term' in \S\ref{recsystem}.
\begin{lemma}\label{rkarithmetic}\cite[Lemma~0.7]{MorsBetwnFinDimInfDimRep}
Suppose that $f=hg$ are morphisms in $\modfp\Lambda$. If $rk(g),rk(h)\geq\nu \geq 1$ then $rk(f)>\nu$.
\end{lemma}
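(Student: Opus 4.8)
The plan is to argue by a transfinite induction on $\nu$, carefully unpacking the definition of $\rad{\nu}$ in each of the three cases (successor of a non-limit, limit ordinal, successor past a limit). The statement to prove is that if $g \colon X \to Y$ and $h \colon Y \to Z$ are morphisms in $\modfp\Lambda$ with $\rk(g), \rk(h) \geq \nu \geq 1$, then $\rk(hg) > \nu$, i.e.\ $hg \in \rad{\nu+1}$. The base case $\nu = 1$ is immediate: $g, h \in \rad{1} = \rad{}$ forces $hg \in \rad{2} = \rad{\nu+1}$ directly from the definition of $\rad{2}$ as the ideal generated by compositions of two radical morphisms. For the inductive step I would treat $\nu = \mu + 1$ with $\mu$ not a limit (so $\nu$ is a finite successor of a finite ordinal, or one can just cover all finite $n$ at once): here $g \in \rad{n}$ and $h \in \rad{n}$ means each is a sum of length-$n$ compositions of radical maps, so $hg$ is a sum of length-$(2n)$ compositions, hence lies in $\rad{2n} \subseteq \rad{n+1}$ as $2n \geq n+1$.

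The more delicate cases are the ordinal-arithmetic ones. If $\nu$ is a limit ordinal, then $g, h \in \rad{\nu} = \bigcap_{\lambda < \nu} \rad{\lambda}$; I want $hg \in \rad{\nu + 1} = (\rad{\nu})^{2}$, which is again immediate since $\rad{\nu+1}$ is by definition generated by compositions of two morphisms from $\rad{\nu}$, and $hg$ is exactly such a composition. Finally, if $\nu = \lambda + n$ with $\lambda$ a limit ordinal and $n \geq 1$, then by definition $\rad{\nu} = (\rad{\lambda})^{n+1}$, so $g$ and $h$ are each sums of $(n+1)$-fold compositions of morphisms in $\rad{\lambda}$; concatenating, $hg$ is a sum of $(2n+2)$-fold compositions of morphisms in $\rad{\lambda}$, hence lies in $(\rad{\lambda})^{2n+2} \subseteq (\rad{\lambda})^{n+2} = \rad{\lambda + (n+1)} = \rad{\nu + 1}$, using $2n + 2 \geq n + 2$ and the monotonicity $(\rad{\lambda})^{j} \subseteq (\rad{\lambda})^{i}$ for $j \geq i$.

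The one genuinely load-bearing observation underneath all cases is that for a limit ordinal $\lambda$, $\rad{\lambda}$ is itself a two-sided ideal (it is an intersection of ideals), so its powers $(\rad{\lambda})^{m}$ form a decreasing chain of ideals, and a composition of morphisms drawn from $(\rad{\lambda})^{i}$ and $(\rad{\lambda})^{j}$ lands in $(\rad{\lambda})^{i+j}$. Granting that, the lemma reduces to the elementary inequalities $2n \geq n+1$ and $2n+2 \geq n+2$ for $n \geq 1$. I expect the main obstacle to be purely bookkeeping: making sure the "$\geq \nu$" hypothesis is used correctly, since $\rk(g) \geq \nu$ means $g \in \rad{\nu}$ (not merely $g \in \rad{\nu} \setminus \rad{\nu+1}$), and that the case split on the ordinal type of $\nu$ is exhaustive and each branch really produces membership in $\rad{\nu+1}$ as defined — in particular checking that $\rad{\nu+1}$ genuinely unwinds to $(\rad{\lambda})^{n+2}$ when $\nu = \lambda + n$, so that the power count matches.
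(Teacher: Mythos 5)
Your proof is correct. Note that the paper does not actually prove this lemma: it is quoted verbatim from Prest's \cite[Lemma~0.7]{MorsBetwnFinDimInfDimRep}, so there is no in-paper argument to compare against. Your case split by the ordinal type of $\nu$ (finite, limit, successor past a limit), together with the observation that $\rad{\lambda}$ is an ideal for limit $\lambda$ so its powers compose additively and decrease, is exactly the direct unwinding of the definitions that one would expect, and each of the inequalities $2n\geq n+1$ and $2n+2\geq n+2$ closes the corresponding case. One small remark: you frame the argument as transfinite induction, but in fact no inductive hypothesis is ever invoked --- each of the three cases is settled directly from the definition of $\rad{\nu+1}$ --- so this is really just an exhaustive case analysis rather than an induction, and the proof would read more cleanly if presented as such.
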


For convenience we also set $\rad\nu(M,N):=\rad\nu\cap\mathrm{Hom}_\Lambda(M,N)$ for $M,N\in\modfp\Lambda$. The invertible morphisms in $\modfp\Lambda$ are precisely the morphisms of rank $0$ while the arrows of the AR quiver of $\Lambda$ have rank $1$.

\begin{rmk}\label{sbbbrank}
Since $\rad\nu$ is an ideal for each $\nu$, it is enough to determine the ranks of the basis elements of the $\mathrm{Hom}$-sets, i.e., of the graph maps.
\begin{itemize}
    \item Let $f:\mathrm B(\bb,n,\lambda)\to\M(\uu)$ be a graph map of type BS with associated string $\vv$. Then for appropriate cyclic permutations $\bb',\bb''$ of the band $\bb$, the canonical map $\mathrm B(\bb,n,\lambda)\to\M(\vv)$ factors through $\M(\bb'\vv\bb'')$ and hence $f\in\rad\omega$. A dual argument shows that a graph map of type SB also lies in $\rad\omega$.
    \item  As a consequence if a map between two string modules factors as a composition of a map of type SB followed by a map of type BS then it lies in $\rad {\omega+1}$.
    \item Let $f:\mathrm B(\bb,n,\lambda)\to\mathrm B(\bb',n',\lambda')$ be a graph map of type BB. If it is induced by a basis element of the $\mathrm{Hom}$-set between $k[T,T^{-1}]$-modules then it is of finite rank. On the other hand if there is a finite string $\vv$ associated to $f$, then it factors through a direct sum of $n$ (or $n'$) copies of $\M(\vv)$. Therefore being a composition of a linear combination of maps of type BS followed by a linear combination of maps of type SB, we see that $f\in\rad{\omega+1}$.
 \end{itemize}
\end{rmk}

\subsection{Hammock posets for strings}\label{Hammock}
Let $v\in Q_0$. The strings with an endpoint at $v$ are partitioned into two sets $H_r(v)$ and $H_l(v)$ using the $\varepsilon$ and $\sigma$ functions. 
$$H_r(v):=\{\vv\in\STR\Lambda\mid t(\vv) = v\mbox{ and }\varepsilon(\vv)=1\},$$
$$H_l(v):=\{\vv\in\STR\Lambda\mid s(\vv) = v\mbox{ and }\sigma(\vv)=-1\}.$$
The strings in $H_l(v)$ are those extending to the left of $v$, i.e., starting at $v$ while the strings in $H_r(v)$ are those extending to the right of $v$ i.e., ending at $v$.

Consider an ordering $<_r$ on $H_r(v)$ defined by $\uu<_r\vv$ if one of the following holds:
\begin{enumerate}
\item $\vv=\uu a\xx$ for $a\in Q_1$ and $\xx\in\STR\Lambda$.
\item $\uu =\vv B\yy$ for $b\in Q_1$ and $\yy\in\STR\Lambda$.
\item $\vv=\zz a\xx$ and $\uu=\zz B\yy$ for $a,b\in Q_1$ and $\xx,\yy,\zz\in\STR\Lambda$.
\end{enumerate}
An ordering $<_l$ on $H_l(v)$ is defined by $\uu<_l\vv$ if $\vv^{-1}<_r\uu^{-1}$ in $H_r(v)$. 
Let $\mm_l$ be the direct string of maximal length and $\MM_l$ be the inverse string of maximal length in $H_l(v)$; similarly let $\MM_r$ be the inverse string of maximal length and $\mm_r$ be the direct string of maximal length in $H_r(v)$.
\begin{lemma}\cite[\S2.5]{SchroerThesis}
The posets $(H_l(v),<_l)$ and $(H_r(v),<_r)$ are total orders. The element $\mm_l$ is the minimal and $\MM_l$ the maximal element of $H_l(v)$ while $\MM_r$ is the minimal and $\mm_r$ the maximal element of $H_r(v)$. Each element of $H_{l/r}(v)$ (except the minimal element) has a direct predecessor and each element (except the maximal element) has a direct successor.
\end{lemma}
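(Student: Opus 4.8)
The plan is to prove all assertions for $H_r(v)$; the $H_l(v)$ statements then follow by transporting along the bijection $\uu\mapsto\uu^{-1}$, which by definition turns $(H_l(v),<_l)$ into the order-dual of a right hammock (after interchanging the roles of $\sigma$ and $\varepsilon$), so minima, maxima, and immediate predecessors/successors all correspond. The engine of the argument is a \emph{branching dichotomy}: if $\zz\in H_r(v)$ (we allow $\zz=1_{(v,1)}$) and $w=s(\zz)$, then among the syllables $\gamma$ with $t(\gamma)=w$ for which the word $\zz\gamma$ is again a string, at most one is a direct arrow and at most one is the formal inverse of an arrow. Granting this, $H_r(v)$, partially ordered by ``$\yy$ is an initial segment of $\xx$ as words'' (i.e. $\xx=\yy\ww$ for some $\ww\in\STR\Lambda$), becomes a finitely branching rooted tree $T$ with root $1_{(v,1)}$, where the (at most two) children of a node are its direct prolongation — call it the \emph{right} child — and its inverse prolongation, the \emph{left} child.

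I would establish the branching dichotomy by a short case analysis on the syllable $\zeta$ of $\zz$ adjacent to $w$ (or, when $\zz$ is lazy, directly at $v$, using conditions (a) and (b) on $\sigma,\varepsilon$). Writing $\zeta$ and $\gamma$ as direct or inverse and unwinding when the two-syllable word $\zeta\gamma$ is a legal string produces four cases, and in each the desired bound reduces to one of: a vertex is the head or the tail of at most two arrows (condition (c)); a given arrow has at most one legal successor arrow and at most one legal predecessor arrow (condition (d)); or two distinct arrows sharing a source, respectively a target, take opposite values under $\sigma$, respectively $\varepsilon$ (the fixed sign conventions).

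Next I would identify $<_r$ with the in-order (symmetric) traversal order of $T$ — equivalently, draw $T$ in the plane with every left child to the left of its parent and every right child to the right, and read the nodes off from left to right. Checking this against the three defining clauses of $<_r$ is routine: clause (3) says that at a genuine fork the direct branch lies to the right of the inverse branch, while clauses (1) and (2) say that a node lies above its entire inverse subtree and below its entire direct subtree. This yields the total order at once. For the extremes, the least element is the leftmost node, reached from the root by taking inverse prolongations as long as possible; condition (b) bounds the lengths of $\rho$-avoiding, hence of purely inverse, strings, so this terminates, and the result is exactly the longest inverse string $\MM_r$ ending at $v$, which is unique by the branching dichotomy. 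Dually the greatest element is the longest direct string $\mm_r$.

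Finally, for immediate predecessors and successors I would transcribe the standard in-order recipe into string language. If $\vv$ is not maximal and admits a direct prolongation $\vv a$ at its source, then, since appending an inverse syllable strictly decreases in $<_r$, the successor of $\vv$ is the leftmost node of the subtree rooted at $\vv a$, namely $\vv a\Rb$ where $\Rb$ is the longest inverse string appendable to $\vv a$ (this again terminates by condition (b)); this is ``adding a hook''. If $\vv$ has no direct prolongation at its source then, not being maximal, $\vv$ is not purely direct, so reading away from $v$ it ends in a (possibly empty) run of direct syllables preceded by an inverse syllable, and $\vv^{+}$ is obtained by ``deleting that cohook'', i.e. erasing that run together with the inverse syllable in front of it; predecessors are the mirror image. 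In each case one checks $\vv<_r\vv^{+}$ and emptiness of the open interval $(\vv,\vv^{+})$ directly from the in-order description, the only non-formal inputs being, once more, uniqueness of one-step prolongations and boundedness of one-sided strings. The step I expect to be most delicate is the branching dichotomy itself — keeping the direct-versus-inverse bookkeeping and the condition-(d) application straight in the cases where $\zeta$ or $\gamma$ is an inverse syllable, and separately making sure the degenerate lazy-path instances of clauses (1)--(3) are faithfully reproduced by the tree-traversal picture.
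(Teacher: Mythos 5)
The paper itself offers no proof of this lemma --- it simply cites Schr\"{o}er's thesis --- so there is nothing internal to compare against. Your reconstruction is correct and is exactly the standard argument one finds in that source: the branching dichotomy from conditions (c), (d) and the sign conventions (a), (b) makes $H_r(v)$ a binary tree rooted at $1_{(v,1)}$ with inverse prolongations as left children and direct prolongations as right children; the three clauses of $<_r$ are precisely the in-order traversal, which gives a total order; the extremes are the leftmost (all-inverse) and rightmost (all-direct) leaves, finite by condition (b); and immediate successors/predecessors are obtained by the usual hook/cohook moves, whose termination again rests on (b). The one place you should be a bit careful is the transport to $H_l(v)$: with the sign conventions as written in the paper, $\uu\mapsto\uu^{-1}$ sends a string with $\sigma(\uu)=-1$ to one with $\varepsilon(\uu^{-1})=-1$, so the image lands in $\{\vv:t(\vv)=v,\ \varepsilon(\vv)=-1\}$ rather than literally in $H_r(v)$; this is the point your parenthetical ``after interchanging the roles of $\sigma$ and $\varepsilon$'' is gesturing at, and it would be worth stating explicitly that one either runs the whole $H_r$-argument with the opposite sign choice, or observes that every step is symmetric under flipping the sign of $\varepsilon$, before invoking the order-reversing bijection. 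With that made precise, the proof is complete.
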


Suppose $\uu,\vv\in H_r(v)$ and $\vv$ is the direct successor of $\uu$ with respect to $<_r$. From the proof of the above lemma we know the description of these elements in terms of each other, and thus that $|\uu|\neq|\vv|$, where $|\uu|$ denotes the length of the string $\uu$. If $|\uu|<|\vv|$, then we say that $\ra(\uu):=\vv$ where $\vv$ can be written as $\uu a\xx$ for unique $a\in Q_1,\xx\in\STR\Lambda$ such that $\xx$ is the inverse string of maximal length such that $\uu a\xx$ is also a string. If $|\uu|>|\vv|$, then we say that $\rb(\vv):=\uu$ where $\uu$ can be written as $\vv B\yy$ for unique $b\in Q_1,\yy\in\STR\Lambda$ such that $\yy$ is the direct string of maximal length such that $\vv B\yy$ is also a string.

The notations $\la,\lb$ can be defined in $H_l(v)$ analogously as follows. Suppose $\uu<_l\vv$. If $|\uu|<|\vv|$, then we say that $\la(\uu):=\vv=\yy B\uu$ for unique $b\in Q_1,\yy\in\STR\Lambda$ such that $\yy$ is the direct string of maximal length such that $\yy B\vv$ is also a string, otherwise $\lb(\vv):=\uu=\xx a\uu$ for unique $a\in Q_1,\xx\in\STR\Lambda$ such that $\xx$ is the inverse string of maximal length such that $\xx a\uu$ is also a string.
\begin{example}
In $\Lambda_2$ from Examples \ref{Ex} we have $\la(a)=ecaBa$ and $\lb(a)=ca$. Note that $\la(c)$ is not defined.
\end{example}

The following technical condition on string algebras, which roughly states that whenever a string can be extended on either side, then it can be extended to an infinite string, will be useful in \S\ref{MetaUcyclic}.
\begin{definition}
We say that a string algebra $\Lambda$ is \emph{$\la$-torsion-free} (resp. \emph{$\ra$-torsion-free}) if for each $v\in Q_0$, whenever $I$ is a finite interval in the total order $H_l(v)$ (resp. $H_r(v)$) such that the sequence of lengths of strings in $I$ is monotone, there exists a string $\xx\notin I$ such that $I\cup\{\xx\}$ is also an interval in $H_l(v)$ (resp. $H_r(v)$) such that its length sequence is also monotone.

We say that a string algebra is \emph{torsion-free} if it is both $\la$ and $\ra$-torsion-free. 
\end{definition}

\begin{examples}
All the string algebras discussed in Examples \ref{Ex} are torsion-free. The following quiver with $\rho = \{cb\}$ is not a torsion-free string algebra for the reason that $\lb(a)=c$ exists while $\lb^2(a)$ does not.
\begin{center}
\begin{tikzcd}
v_1 \arrow[r, "a", bend left] \arrow[r, "b"', bend right] & v_2 \arrow[r, "c"] & v_3
\end{tikzcd}
\end{center}
\end{examples}

Let $\Lambda$ be a string algebra and let $\vv\in H_l(v)$ such that $\la(\vv)$ and $\la^{n+1}(\vv)=\la(\la^n(\vv))$ exist for all $n\in\N$. The limit of the increasing sequence $\brac 1 \la(\vv):=\lim_{n\to\infty}\la^n(\vv)$ is a left $\N$-string. Note that $\brac 1\la(\vv)\notin H_l(v)$.

The simplest question we ask is whether the equation \begin{equation}\label{lalbeqn}
    \brac 1\lb(\xx)=\brac 1\la(\vv)
\end{equation}
has a solution for a given string $\vv$. If one solution exists then infinitely many solutions exist for if $\yy'$ is a solution then so is $\lb(\yy')$. We say that $\yy$ is a \emph{fundamental solution} of Equation \eqref{lalbeqn} if $\yy=\lb(\xx)$ has no solution, i.e., if $\yy$ is a solution of minimal length. Further suppose that $\vv=\la(\xx)$ has no solution. Since both $\vv$ and $\yy$ are substrings of the left $\N$-string $\brac 1\la(\vv)=\brac 1\lb(\yy)$, one of them is a proper substring of the other. If $\vv$ is a substring of $\yy$, then we rewrite Equation \eqref{lalbeqn} as $\yy=\brac\lb\la(\vv)$, otherwise we write $\vv=\brac\la\lb(\yy)$. If $\yy=\brac\lb\la(\vv)$, then we also say that $\yy=\brac\lb\la(\la^n(\vv))$ for any $n\in\N$.

The new expression $\brac\lb\la$ labels a path in the hammock poset of a string algebra. Say a \emph{real term} is an expression formed out of $\la,\lb$, brackets and concatenation that labels a path between two strings for a domestic string algebra--the details of the construction of real terms can be found in \cite{SK}. In this paper we will not concern ourselves with real terms in view of Theorem \ref{norealterm} which loosely states that no real term, other than a finite composition of finite power of $\la,\lb,\ra,\rb$, can label a path between two strings in the class of non-domestic string algebras that are the main objects of study here.

\begin{rmk}
Note that because of the description of the strings $\la(\vv)$ and $\lb(\vv)$ for $\vv \in \STR\Lambda$, the strings $\lb(\la(\vv))$ and $\la(\lb(\vv))$ are not defined. 
\end{rmk}

\section{The bridge quiver of a string algebra}\label{BridgeQuiver}

\subsection{Prime bands}\label{primeband}
The finiteness of the set of bands is crucial in the study of domestic string algebras. On the other hand the basic obstruction for studying non-domestic string algebras is the infinitude as well as the complexity of their bands. In order to deal with this situation we introduce the concept of ``prime bands'' which play the same role in the analysis of non-domestic string algebras that is played by the collection of (finitely many) bands in the domestic case.

\begin{definition}
A band $\bb$ is a \emph{prime band} if for none of its cyclic permutation $\bb$ can be written as $\bb_1\bb_2\hdots\bb_k$
for some $k>1$ where each $\bb_i$ is a cyclic permutation of a band.
\end{definition}

Note that in a domestic string algebra all bands are prime.

\begin{example}
In $GP_{2,3}$ from Examples \ref{Ex}, we have $aB^2$ is a prime band while $aB^2aB$ is not. 
\end{example}

The main goal of this section is to show that there are only finitely many prime bands in any string algebra.

Say that a string is \emph{mixed} if it contains both direct and inverse syllables. The definition of a string algebra clearly implies that each band is a mixed primitive cyclic string.
\begin{rmk}
Suppose $\uu$ is a mixed cyclic string. Then the string $\uu^n$ is defined for all $n\geq2$ if and only if $\uu^2$ is defined. The only non-trivial point is that a substring of $\uu^n$ which is a blocked relation, i.e., an element of $\rho$, is actually a substring of $\uu^2$.
\end{rmk}

Say that a cyclic string $\uu=\alpha_n\hdots\alpha_1$ is \emph{permutable} if the words $\alpha_k\hdots\alpha_1\alpha_n\hdots\alpha_{k+1}$ are strings for each $1\leq k<n$. Now we can give a characterisation of bands.
\begin{rmk}\label{banddef}
As an easy consequence of the above remark note that a mixed primitive cyclic string is a cyclic permutation of a band if and only if it is permutable.
\end{rmk}

Now we observe how many times a mixed string can occur as a substring in a prime band.

\begin{proposition}\label{joint}
Let $\bb$ be a prime band and $aB$ be a string for $a,b\in Q_0$. Then any permutation of $\bb$ contains the substring $aB$ at most once.
\end{proposition}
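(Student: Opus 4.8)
The plan is to prove the contrapositive: if some cyclic permutation of the band $\bb$ contains the string $aB$ at least twice, then $\bb$ is not prime. Since being prime depends only on the cyclic word underlying $\bb$, I may first replace $\bb$ by such a cyclic permutation, and then rotate once more so that $\bb=\alpha_n\hdots\alpha_1$ with $\alpha_1=\alpha_q=B$ and $\alpha_2=\alpha_{q+1}=a$; as two occurrences of the two-syllable pattern $aB$ cannot overlap (otherwise $a=B$), we get $3\leq q\leq n-1$. Put $\xx:=\alpha_{q+1}\alpha_q\hdots\alpha_3$ and $\yy:=\alpha_2\alpha_1\alpha_n\hdots\alpha_{q+2}$. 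Then $\xx$ is a contiguous substring of $\bb$ and $\yy$ a contiguous substring of $\bb^2$, so both are strings; moreover $|\xx|=q-1\geq2$, $|\yy|=n-q+1\geq2$, and a routine check shows that $\bb$ is a cyclic permutation of the concatenation $\xx\yy$. Hence it suffices to prove that each of $\xx,\yy$ is a power of a cyclic permutation of a band, for then $\bb\sim\xx\yy$ is a product of at least two cyclic permutations of bands, contradicting primality.

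Both $\xx$ and $\yy$ are mixed, as each contains the direct syllable $a$ and the inverse syllable $B$; and each is cyclic, since one end of it is the arrow $a$ (with target $t(a)$) while its other end is a syllable which in $\bb$ is immediately preceded by an occurrence of $a$ (hence has source $t(a)$). By Remark~\ref{banddef} together with the remark preceding it, a mixed cyclic string whose square is a string is a power of a cyclic permutation of a band (its rotations all being substrings of its square, it is permutable and all its powers exist; and its primitive root, which has length $\geq2$ and inherits being mixed, cyclic and having its square a string, is then a cyclic permutation of a band). So it remains to prove that $\xx^2$ and $\yy^2$ are strings.

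Inside one copy of $\xx$ or $\yy$ there is nothing to check, and composability and non-inverseness at the seam of $\xx^2$ (resp.\ $\yy^2$) follow from the corresponding facts for $\bb$ at the pertinent occurrence of $a$. The crucial point is that the syllable of $\xx$ (resp.\ $\yy$) adjacent to the arrow $a$ other than at the seam is the inverse syllable $\alpha_q=B$ (resp.\ $\alpha_1=B$). Consequently, in $\xx^2=\xx\xx$ (and in $\yy^2$) the copy of $a$ at the seam is flanked on one side by this inverse syllable, so no relation of $\rho$ can straddle the seam on that side; on the other side, any directed path straddling the seam consists of that copy of $a$ together with syllables of a copy of $\xx$ (resp.\ $\yy$)---equivalently, since $a$ occurs in $\bb$ both at position $2$ and at position $q+1$, it is literally a directed subpath of $\bb$, hence not a relation (and its reverse is an inverse path, hence also not a relation). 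So $\xx^2$ and $\yy^2$ are strings, which completes the argument.

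I expect the main obstacle to be exactly the seam analysis of the previous paragraph. It is the only place where one uses that the repeated string is of the special form $aB$, a direct arrow immediately followed by an inverse syllable; this is precisely the feature that blocks any relation from bridging the two halves of the cut, so that the two pieces produced by the cut come out to be (powers of cyclic permutations of) honest bands rather than mere strings. One should also separately---but easily---dispatch the boundary cases $q=3$ and $q=n-1$, in which $\xx$, respectively $\yy$, has length exactly $2$.
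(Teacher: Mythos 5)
Your proof is correct. It attacks the problem in the same spirit as the paper's---cut the cyclic word at occurrences of $aB$ and show the resulting pieces are (cyclic permutations of) bands---but the execution differs in two respects. First, the paper cuts at \emph{every} occurrence of $aB$, taking only ``minimal'' segments $\bb_i=B\uu_i a$ between consecutive occurrences; because no $\uu_i$ contains $aB$, each $\bb_i$ is automatically primitive, so the paper obtains cyclic permutations of bands directly. You instead cut at just two occurrences, producing only two pieces $\xx,\yy$ which need not be primitive, and you compensate by passing to primitive roots; this works, but it is an extra step the paper avoids. Second, and more substantially, the two proofs place the cut in different positions relative to the pattern $aB$. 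The paper cuts \emph{between} the $a$ and the $B$, so each $\bb_i$ begins with $a$ and ends with $B$; at the seam of $\bb_i^2$ the adjacent syllables are the direct $a$ and the inverse $B$, so no directed path can straddle the seam at all, and permutability of $\bb_i$ is immediate. You cut just to the left of each $a$, so the seam of $\xx^2$ (or $\yy^2$) juxtaposes the direct $a$ with the unknown syllable $\alpha_3$ (resp.\ $\alpha_{q+2}$); this forces you into the longer argument in your third paragraph, where you trace the maximal directed path across the seam and observe that it is literally a directed subpath of $\bb$ because $a$ occurs at both positions $2$ and $q+1$. Both arguments are sound, but the paper's choice of cut point makes the seam analysis essentially trivial, whereas yours transfers the burden to the observation that the path across the seam already lives in $\bb$. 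One small stylistic remark: once you know $\xx_0$ is mixed, its length $\geq 2$ is automatic, so the separate treatment of the boundary cases $q=3$ and $q=n-1$ you flag at the end is not really needed beyond noting $|\xx|,|\yy|\geq 2$, which you already did.
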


\begin{proof}
Suppose, for contradiction, that the string $aB$ occurs in a permutation of $\bb$ at least twice, so that a permutation of $\bb$ is of the form $$B\uu_1aB\uu_2aB\hdots\uu_na,$$ where $n>1$ and no $\uu_i$ contains $aB$ as a substring.

We claim that each $\bb_i:=B\uu_ia$ is a cyclic permutation of a band. Clearly $\bb_i$ is a mixed cyclic string. It is also primitive since any of its permutations cannot contain more than one copy of $aB$. Therefore, in view of the above remark, it only remains to show that $\bb_i$ is permutable. A substring of a permutation of $\bb_i$ that lies in $\rho$ is also a substring of $\bb_i$ for its first and the final syllables are not both direct or both inverse. This completes the proof.
\hfill\end{proof}

Now we are ready to prove the the main result of this section.
\begin{theorem}\label{finprimeband}
There are only finitely many prime bands in a string algebra.
\end{theorem}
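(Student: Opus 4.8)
The plan is to bound the length of any prime band in terms of fixed data of the quiver with relations, which immediately gives finiteness since there are only finitely many strings of bounded length. The key structural input is Proposition \ref{joint}: in a prime band $\bb$, every string of the form $aB$ (with $a,b\in Q_1$) occurs at most once among all cyclic permutations of $\bb$. So the number of ``descents'' — positions where a direct syllable is immediately followed (reading right to left in our convention) by an inverse syllable, i.e.\ occurrences of a syllable pattern $aB$ — is at most the number of such strings $aB$, which is finite and depends only on $(\Q,\rho)$. Dually, I also want to control the ``ascents'', occurrences of a pattern $bA$ where an inverse syllable is followed by a direct one. Between consecutive descents (resp.\ ascents), the band runs through a maximal direct or maximal inverse subpath; by condition (b) in the definition of a string algebra there are only finitely many paths in $\Q$ avoiding $\langle\rho\rangle$, so each such maximal monotone stretch has bounded length. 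Multiplying the bound on the number of monotone stretches by the bound on their individual lengths bounds $|\bb|$.

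**First I would** set up notation: let $N$ be the number of strings of the form $aB$ with $a,b\in Q_1$ (equivalently the number of ``peaks''), and note by a symmetric argument — applying Proposition \ref{joint} to the inverse band $\bb^{-1}$, which is again a prime band since the property of being a concatenation of cyclic permutations of bands is stable under inversion — that every string of the form $bA$ occurs at most once in the cyclic permutations of $\bb$; call the number of such strings $N'$ (the ``valleys''). Reading $\bb$ cyclically, it is an alternating sequence of maximal direct runs and maximal inverse runs; each transition from a direct run to an inverse run is a peak $aB$ and each transition from inverse to direct is a valley $bA$. Since $\bb$ is mixed and cyclic, the peaks and valleys alternate around the cycle, so the number of maximal monotone runs is at most $2\min(N,N')\le N+N'$. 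Let $L$ be the maximal length of a path in $\Q$ with no subpath in $\langle\rho\rangle$; this is finite by condition (b). Each maximal direct run of $\bb$ is such a path, hence has length at most $L$, and likewise for inverse runs (which correspond to such paths traced backwards). Therefore $|\bb|\le (N+N')L$, a bound independent of $\bb$.

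**Then I would** conclude: every prime band has length at most $(N+N')L$, and a string of bounded length is a word of bounded length in the finite alphabet $Q_1\cup Q_1^{-}$, so there are only finitely many of them; a fortiori only finitely many are prime bands. One should double-check the edge cases — that the alternation of peaks and valleys is genuine (a band cannot have two consecutive peaks without an intervening valley, because the portion in between would have to be simultaneously the end of the inverse run after the first peak and the start of the inverse run before the second, forcing the runs to merge), and that ``maximal monotone run'' is well-defined for the cyclic word $\bb$ even when $\bb$ consists of a single peak and single valley.

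**The main obstacle** I anticipate is making the combinatorial bookkeeping around the cyclic structure fully rigorous: precisely, showing that distinct maximal monotone runs of $\bb$ cannot be ``glued'' in a way that lets the same peak or valley string $aB$ do double duty, and handling the cyclic wrap-around carefully so that Proposition \ref{joint} is applied to genuinely distinct occurrences. The counting itself is elementary once the alternating peak/valley decomposition of a mixed cyclic string is pinned down; the subtlety is entirely in that decomposition and in confirming that each maximal run really does embed as a relation-free path so that condition (b) applies.
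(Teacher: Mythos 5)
Your proof is correct and follows essentially the same route as the paper: decompose the prime band into alternating maximal direct and inverse runs, bound the number of runs via Proposition \ref{joint} (there are at most as many transitions as there are strings of the form $aB$), and bound the length of each run by condition (b) of the definition of a string algebra. The extra work you do counting ``valleys'' as well as ``peaks'' via the inverse band $\bb^{-1}$ is harmless but redundant, since in the cyclic alternating decomposition the number of direct runs equals the number of inverse runs, so bounding the peaks alone already bounds both.
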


\begin{proof}
Let $\bb=\uu_1\vv_1\uu_2\vv_2\hdots\uu_n\vv_n$ be a prime band where each $\uu_i$ is a direct string while each $\vv_i$ is an inverse string. By Proposition \ref{joint} we know that $n$ is bounded as a function of the number of strings of the form $aB$, which clearly is finite given the finiteness of $Q_1$. Moreover the length of each direct (as well as inverse) string is absolutely bounded by the second clause in the definition of string algebras. Thus the length of a prime band is absolutely bounded which proves the statement.
\hfill\end{proof}

We can also use the idea of counting substrings of the form $aB$ to prove a result which would be useful in concluding the finiteness of the `bridge quiver' in the next section. Say that a string is \emph{band-free} if it does not contain any cyclic permutation of a band as a substring.

\begin{proposition}\label{finbandfree}
There are only finitely many band-free strings in a string algebra.
\end{proposition}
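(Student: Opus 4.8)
\smallskip
\noindent\emph{Proof sketch.} The plan is to bound the length of band-free strings; since $\mathcal{Q}$ is finite there are only finitely many strings of any fixed bounded length (lazy paths being finite in number in particular), so it suffices to produce a constant $N$, depending only on $\Lambda$, such that every string of length greater than $N$ contains a cyclic permutation of a band as a substring. Let $L$ be the maximal length of a path in $\mathcal{Q}$ with no subpath in $\langle\rho\rangle$; this is finite by clause~(b) of the definition of a string algebra. I will show one may take $N:=(2|Q_1|^2+1)L$.

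Mimicking the decomposition in the proof of Theorem~\ref{finprimeband}, write a string $\ww$ of length at least $1$ in its maximal alternating form $\ww=\ww_t\ww_{t-1}\cdots\ww_1$, where the $\ww_i$ are nonempty and alternately direct and inverse paths. Each $\ww_i$ has no subpath in $\langle\rho\rangle$, so $|\ww_i|\le L$ and hence $|\ww|\le tL$; it therefore suffices to bound the number $t$ of blocks. Each of the $t-1$ junctions between consecutive blocks is either a \emph{peak}, i.e.\ a substring of the form $aB$ with $a,b\in Q_1$ (necessarily $a\ne b$), or a \emph{valley}, i.e.\ a substring of the form $Ba$. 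The collections of peak-words and of valley-words are finite, each of size at most $|Q_1|^2$, so if $t-1>2|Q_1|^2$ then by pigeonhole either some peak $aB$ or some valley $Ba$ occurs in $\ww$ at two distinct positions.

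The core step is to extract a band from such a repetition, exactly along the lines of the proof of Proposition~\ref{joint}. Suppose $aB$ occurs twice and let $\xx$ be the substring of $\ww$ running from the $a$ of one occurrence to the $B$ of the other, so that $\xx$ has the form $B\uu a$. Since $aB$ is a string, $s(a)=t(B)$, so $\xx$ is cyclic; it is mixed, containing the direct syllable $a$ and the inverse syllable $B$; and it is permutable, because the only new adjacency created by a cyclic rotation of $\xx$ is that of $a$ with $B$, which is legitimate as $aB$ is a string, whereas any subword of such a rotation that straddles this adjacency contains both $a$ and $B$, hence is mixed, hence can be neither a monomial relation nor the inverse of one. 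By Remark~\ref{banddef}, if $\xx$ is primitive it is a cyclic permutation of a band; otherwise $\xx=\yy^k$ with $\yy$ primitive, and the same observations show that $\yy$ is a mixed permutable cyclic string, hence a cyclic permutation of a band, and $\yy$ is a substring of $\xx$. The case of a repeated valley is symmetric. In every case $\ww$ contains a cyclic permutation of a band, so it is not band-free; hence every band-free string has length at most $N$, and the proposition follows.

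The step I expect to require the most care --- though it is bookkeeping rather than a new idea --- is the permutability check for $\xx$, and for its primitive root $\yy$: one must verify that no cyclic rotation of it contains a forbidden subword. This is precisely the subtle point already handled in the proof of Proposition~\ref{joint}, and it ultimately rests on the fact that a mixed word is never a monomial and so cannot be (the inverse of) a relation. The other ingredients --- the alternating decomposition, the length bound on relation-free paths, and the pigeonhole count --- are routine.
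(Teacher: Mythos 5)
Your proposal is correct and follows essentially the same approach as the paper's proof: bound band-free strings by decomposing into alternating direct/inverse blocks, bound each block's length by the longest relation-free path, pigeonhole on the junction words, and then extract a mixed permutable primitive cyclic word (hence a cyclic permutation of a band by Remark~\ref{banddef}) from a repeated junction, precisely as in Proposition~\ref{joint}. The only cosmetic differences are that the paper tracks only the peaks $aB$ and chooses adjacent occurrences so that primitivity is immediate, whereas you track both peaks and valleys and pass to the primitive root when needed; both variants are sound.
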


\begin{proof}
Suppose, on contrary, that there are infinitely many such strings. This implies that there are band-free strings of arbitrary lengths. Suppose in a string algebra the absolute bound on the length of a direct string is $m$ and the number of strings of type $aB$, $a,b \in Q_1$, is $n$. Then, any string of length greater than $(n+1)m + 2n$ must contain a string of type $aB$ at least twice and hence would be of the form $$\uu_1aB\uu_2aB\uu_3,$$
for strings $\uu_1, \uu_2, \uu_3$ and $a,b \in Q_1$ such that $aB$ is not a substring of $\uu_2$. Then, $B\uu_2a$ is a mixed primitive cycle. It is also permutable because a substring of any of its permutation that lies in $\rho$ would also be its substring. Therefore, by Remark \ref{banddef}, $B\uu_2a$ is a band, contrary to our assumption.
\hfill\end{proof}

This result can also be deduced from \cite[Lemma~5.2]{STV}.

\subsection{Bridges and half bridges}\label{primebridge}
In order to understand the flow in the hammock posets $H_{l/r}(v)$ for fixed $v\in Q_0$, we need to understand how to move between bands. In the domestic case this movement can be captured using some special strings called `bridges', and then we can construct a quiver whose vertices are bands and whose arrows are such bridges. In the non-domestic case we replace the bands with prime bands and define the bridges in a similar way.

Recall that $\Ba\Lambda$ is the collection of bands up to cyclic permutation. Let $Q_0^{\mathrm{Ba}}$ be a fixed set of representatives of prime bands in $\Ba\Lambda$.

\begin{definition}
For $\bb_1,\bb_2\in Q_0^{\mathrm{Ba}}$ say that a finite string $\uu$ is a \emph{weak bridge} $\bb_1\to\bb_2$ if it is band-free and if the word $\bb_2\uu\bb_1$ is a string.

Say that a weak bridge $\bb_1\xrightarrow{\uu}\bb_2$ is a \emph{bridge} if there is no prime band $\bb$ and weak bridges $\bb_1\xrightarrow{\uu_1}\bb$ and $\bb\xrightarrow{\uu_2}\bb_2$ such that one of the following holds.
\begin{itemize}
    \item $\uu=\uu_2\uu_1,|\uu_1|>0,|\uu_2|>0$.
    \item $\uu=\uu'_2\uu'_1,|\uu'_1|>0,|\uu'_2|>0,\uu_2=\uu'_2\uu''_2,\uu_1=\uu''_1\uu'_1$ and $\bb=\uu''_2\uu''_1$.
\end{itemize}
\end{definition}

While in the domestic case the above conditions imply that a bridge has non-zero length, a bridge between two prime bands in a non-domestic algebra could have zero length. In particular, we also allow zero length bridges from a band to itself. These are called \emph{trivial} bridges.

By $Q_1^{\mathrm{Ba}}$ we denote the set of all bridges between prime bands in $Q_0^{\mathrm{Ba}}$; this together with $Q_0^{\mathrm{Ba}}$ constitutes a quiver $\Q^{\mathrm{Ba}}=(Q_0^{\mathrm{Ba}},Q_1^{\mathrm{Ba}})$ known as the \emph{bridge quiver}. Theorem \ref{finprimeband} and Proposition \ref{finbandfree} together imply that the the bridge quiver of any string algebra is finite.

\begin{definition}
Given $\xx\in\STR\Lambda$ and $\bb\in Q^{\mathrm{Ba}}_0$, a \emph{weak half bridge} $\xx\to\bb$ is a string $\uu$ such that
\begin{enumerate}
    \item the word $\bb\uu\xx$ is a string (in case $\xx$ is of the form $1_{(v,i)}$, then we require that $\bb\uu$ and $\uu1_{(v,i)}$ are defined);
    \item the word $\uu$ is band-free.
\end{enumerate}
\end{definition}

Suppose $\bb_1\xrightarrow{\uu}\bb_2$ is a bridge. The \emph{exit} of this bridge is first syllable in $\prescript{\infty}{}{\bb_2}\uu\bb_1$ from the right where the strings $\prescript{\infty}{}{\bb_2}\uu\bb_1$ and $\prescript{\infty}{}{\bb_1}$ differ. We further define a function $\sBa:Q_1^{\mathrm{Ba}}\to\{1,-1\}$ by $\sBa(\uu):=1$ if and only if the exit of $\uu$ is an inverse arrow. Similarly, the \emph{exit} of a weak half bridge $\xx\xrightarrow{\uu}\bb$ is the first syllable of $\bb\uu$. We extend the map $\sBa$ to weak half bridges too.

Proposition \ref{finbandfree} says that the set of weak half bridges from $\xx$ is finite. We define a relation $\sqsubseteq$ on the set of all weak half bridges from $\xx$ by $\uu\sqsubseteq\uu'$ for distinct $\xx\xrightarrow{\uu}\bb,\ \xx\xrightarrow{\uu'}\bb'$ if there is a bridge $\bb\xrightarrow{\vv}\bb'$ such that either $\uu'=\vv\uu$ or $\uu'$ can be obtained from $\vv\uu$ by removing, if exists, a cyclic permutation of $\bb$. 

The dual notions are defined as follows. Say $\bb\xrightarrow{\uu}1_{(v,i)}$ is a \emph{reverse weak half/half bridge} if $1_{(v,-i)}\xrightarrow{\uu^{-1}}\bb^{-1}$ is a weak half/half bridge.

\begin{definition}
Given $\xx\in\STR\Lambda$ and $\bb\in Q^{\mathrm{Ba}}_0$, a weak half bridge $\xx\xrightarrow{\uu}\bb$ is a \emph{half bridge} if there does not exist a weak half bridge $\xx\xrightarrow{\uu'}\bb'$ such that $\uu'\sqsubseteq\uu,\ \uu\nsqsubseteq\uu'$ and $\sBa(\uu) = \sBa(\uu')$.
\end{definition}

\begin{rmk}
Recall that every band in a domestic string algebra is a prime band. Moreover the notions of a bridge
and (reverse) half bridge coincide with the existing notions.
\end{rmk}

\begin{definition}
Given $v_1, v_2 \in Q_0$ and $i_1, i_2 \in \{-1,1\} $, a \emph{weak zero bridge} $1_{(v_1,i_1)}\xrightarrow{\uu}1_{(v_2,i_2)}$ is a band-free string $\uu$ such that $1_{(v_2,i_2)}\uu$ and $\uu1_{(v_1,i_1)}$ are defined.

A \emph{zero bridge} $1_{(v_1,i_1)}\xrightarrow{\uu}1_{(v_2,i_2)}$ is a weak zero bridge for which there is no band $\bb$, weak half bridge $1_{(v_1,i_1)}\xrightarrow{\uu_1}\bb$ and reverse weak half bridge $\bb\xrightarrow{\uu_2}1_{(v_2,i_2)}$ such that $|\uu_1|,|\uu_2|>0$, and either $\uu=\uu_2\uu_1$ or $\uu$ can be obtained from $\uu_2\uu_1$ by removing, if exists, a cyclic permutation of $\bb$.
\end{definition}

Finally we introduce two notions which will be crucial in the next subsection. Define the \emph{extended bridge quiver} (resp. \emph{extended weak bridge quiver}), denoted $\overline{\Q}^{\mathrm{Ba}}$ (resp. $\widetilde{\Q}^{\mathrm{Ba}}$), as follows. Its vertex set is $\overline{Q}_0^{\mathrm{Ba}}=\widetilde{\Q}^{\mathrm{Ba}}_0:=Q_0^{\mathrm{Ba}}\sqcup\{1_{(v,i)}\mid v\in Q_0, i\in\{1,-1\}\}$, whereas its arrows are all the (resp. weak) bridges between prime bands, (resp. weak) half bridges from and reverse (resp. weak) half bridges to vertices of the form $1_{(v,i)}$, and (resp. weak) zero bridges between two vertices of the form $1_{(v,i)}$.
\begin{example}
Continuing with Example \ref{Ex2}, Figure \ref{extbrquiverex} shows a part of the extended bridge quiver for $GP_{2,3}$.
\begin{figure}[h]
    \begin{tikzcd}
    aB \arrow["B", loop, distance=2em, in=145, out=215] \arrow[dd, "{1_{(v,-1)}}"', bend right=49] &  &                  &    &  & bA \arrow["b"', loop, distance=2em, in=35, out=325] \arrow[dd, "{1_{(v,1)}}"'] \arrow[dd, "b", shift left] \\
    &  & {1_{(v,-1)}} \arrow[llu, "{1_{(v,-1)}}"] \arrow[llu, "a"', shift right] \arrow[llu, "B"', dashed, bend right] \arrow[lld, "{1_{(v,-1)}}"'] \arrow[lld, "a", shift left] & {1_{(v,1)}} \arrow[rru, "b"] \arrow[rru, "{1_{(v,1)}}"', shift right] \arrow[rru, "b^2", dashed, bend left] \arrow[rrd, "b"'] \arrow[rrd, "{1_{(v,1)}}", shift left] \arrow[rrd, "b^2"', dashed, bend right] &  &    \\ aB^2 \arrow[uu, "B"] \arrow[uu, "{1_{(v,-1)}}"', shift right]    &  &          &   &  & b^2A \arrow[uu, "{1_{(v,1)}}"', bend right=49] 
    \end{tikzcd}
    \caption{The bridge quiver for $GP_{2,3}$ with weak half bridges. The dotted arrows represent weak half bridges which are not half bridges.}
    \label{extbrquiverex}
\end{figure}
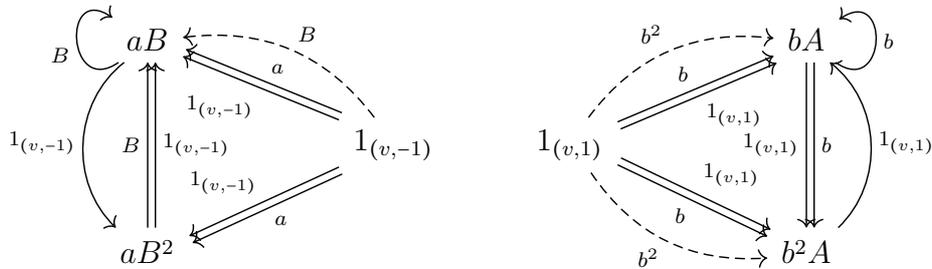

Note that in Figure \ref{extbrquiverex} the weak half bridge $\uu:=(1_{(v,-1)} \xrightarrow{B} aB)$ is not a half bridge because $\uu':=(1_{(v,-1)}\xrightarrow{1_{(v,-1)}}aB)$ satisfies $\uu'\sqsubseteq\uu$, $\uu\nsqsubseteq\uu'$ and $\sBa(\uu)=\sBa(\uu')$. On the other hand, although $1_{(v,1)}\xrightarrow{1_{(v,1)}}bA$ is strictly lower than  $1_{(v,1)}\xrightarrow{b}b^2A$ with respect to $\sqsubseteq$, the latter is a half bridge since the values of $\sBa$ associated to them are different.
\end{example}

\subsection{Generating strings with paths in bridge quivers}\label{roottrees}
The goal of this section is to show that each string is `generated by', in a precise sense, at least one directed path in the extended bridge quiver.

A \emph{weak path} is a directed path in $\widetilde{\Q}^\mathrm{Ba}$ or $\overline{\Q}^\mathrm{Ba}$ that begins and ends at a vertex not in $Q_0^\mathrm{Ba}$. A \emph{path} is a weak path of the form $\mathcal P:=(1_{(v,i)}\xrightarrow{\uu_0}\bb_1\xrightarrow{\uu_1}\bb_2\xrightarrow{\uu_2}\hdots\xrightarrow{\uu_{n-1}}\bb_n\xrightarrow{\uu_n}1_{(v',i')})$ for some $n\geq 0$. If $n=0$ then $\uu_0$ is a (weak) zero bridge otherwise $\uu_0$ is a (weak) half bridge, $\uu_{n}$ is a reverse (weak) half bridge while the rest $\uu_{i}$ are non-trivial (weak) bridges. Say that a string $\xx\in H_l(v)$ is \emph{generated by} $\mathcal P$ if $\xx$ is the string obtained from reducing the word of the form $\uu_n\bb_n^{m_n}\uu_n\hdots\uu_1\bb_1^{m_1}\uu_0$ such that for all $1\leq j\leq n$ we have $m_j\geq-1$. Analogous definition could be obtained for strings generated by weak paths where powers of prime bands are replaced by $1_{(v,i)}$ wherever appropriate.

\begin{rmk}\label{weaktononweak}
We need $m_j=-1$ in order to express a weak bridge, a weak (reverse) half bridge, or a weak zero bridge as a string generated by a finite path consisting of their non-weak counterparts.
\end{rmk}

Here is an example where negative power of a band is necessary. 
\begin{example}
Consider the following quiver
\begin{center}
\begin{tikzcd}
v_3                 &                                     & v_2 \arrow[ld, "d"] \arrow[ll, "c"'] \\
v_4 \arrow[r, "a"'] & v_1 \arrow[r, "e"'] \arrow[lu, "b"] & v_5                                 
\end{tikzcd}
\end{center}
with $\rho=\{ba, ed\}$. There is a unique band $\bb=cDB$. Then the string $ea$ is generated by the path $1_{(v_4,i)}\xrightarrow{cDa}\bb\xrightarrow{eB}1_{(v_5,j)}$ for appropriate $i,j\in\{1,-1\}$ as $ea$ can be obtained by simplifying the word $eB\bb^{-1}cDa$.
\end{example}

In a non-domestic string algebra there could be multiple paths generating the same string.
\begin{example}
Consider the extended bridge quiver of $GP_{2,3}$ from Figure \ref{extbrquiverex}. Then the string $B$ is generated by the path $1_{(v,-1)}\xrightarrow{1_{(v,-1)}} aB\xrightarrow{B}aB\xrightarrow{1_{(v,-1)}}1_{(v,-1)}$ as well as the path $1_{(v,-1)}\xrightarrow{1_{(v,-1)}}aB^2\xrightarrow{B}aB\xrightarrow{1_{(v,-1)}}1_{(v,-1)}$.
\end{example}

Let $\mathcal S(1_{(v,i)})$ denote the set of all strings generated by paths in the extended bridge quiver starting with a zero/half bridge from $1_{(v,i)}$. The following expected result states that we have indeed developed a language that accounts for all strings in a string algebra.
\begin{lemma}\label{generateallstrings}
For $v\in Q_0$, we have $\mathcal S(1_{(v,1)})=H_l(v)$ and $\mathcal S(1_{(v,-1)})=\{\xx^{-1}\mid\xx\in H_r(v)\}$.
\end{lemma}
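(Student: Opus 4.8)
The plan is to prove both inclusions of the first equality; the second, $\mathcal S(1_{(v,-1)})=\{\xx^{-1}\mid\xx\in H_r(v)\}$, is then obtained by rerunning the same argument verbatim with the value $1$ replaced by $-1$ (so that one reads $\sigma(\xx)=1$ in place of $\sigma(\xx)=-1$, using that $\{\xx^{-1}\mid\xx\in H_r(v)\}$ is exactly the set of strings starting at $v$ with $\sigma$-value $1$). For the easy inclusion $\mathcal S(1_{(v,1)})\subseteq H_l(v)$: if $\xx$ is the reduction of $\uu_n\bb_n^{m_n}\uu_{n-1}\cdots\uu_1\bb_1^{m_1}\uu_0$ coming from a path $1_{(v,1)}\xrightarrow{\uu_0}\bb_1\xrightarrow{\uu_1}\cdots\xrightarrow{\uu_{n-1}}\bb_n\xrightarrow{\uu_n}1_{(v',i')}$ with all $m_j\geq-1$, then every adjacent concatenation in this word, as well as every band power $\bb_j^{m_j}$, is a legal string by the defining properties of half bridges, bridges, reverse half bridges and zero bridges together with the fact that a band has all its powers; hence the word, and so $\xx$, is a string, and the condition $\uu_0 1_{(v,1)}=\uu_0$ forces $s(\xx)=v$ and $\sigma(\xx)=-1$, i.e.\ $\xx\in H_l(v)$.

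The substance is the inclusion $H_l(v)\subseteq\mathcal S(1_{(v,1)})$, which I would deduce from a strengthened statement, proved by induction on $|\zz|$ and uniform over the vertices of the extended bridge quiver: for every $\bb_0\in\overline{Q}_0^{\mathrm{Ba}}$ and every string $\zz$ such that $\bb_0\zz$ is a string (with the usual reading when $\bb_0$ is a lazy path) and, in case $\bb_0$ is a prime band, $\zz$ does not begin from its $\bb_0$-end with a cyclic permutation of $\bb_0$, the string $\zz$ is generated by a directed path of $\overline{\Q}^{\mathrm{Ba}}$ that issues from $\bb_0$ --- through a zero bridge or half bridge when $\bb_0$ is a lazy path, and through a non-trivial bridge when $\bb_0$ is a prime band. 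Taking $\bb_0=1_{(v,1)}$ recovers the desired inclusion.

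For the base of the induction, suppose $\zz\in H_l(v)$ is band-free (and $\bb_0=1_{(v,1)}$): either $\zz$ is a zero bridge $1_{(v,1)}\to 1_{(t(\zz),j)}$, hence generated by the length-$0$ path it determines, or, by the definition of a zero bridge, $\zz$ factors --- possibly after deleting one cyclic permutation of a prime band $\bb_1$ --- as $\uu_2\uu_1$ with $\uu_1$ a weak half bridge $1_{(v,1)}\to\bb_1$ and $\uu_2$ a reverse weak half bridge $\bb_1\to 1_{(t(\zz),j)}$; choosing the exponent $m_1=0$ (or $m_1=-1$ in the deletion case, cf.\ Remark \ref{weaktononweak}) then reduces the problem to promoting these weak (reverse) half bridges to genuine ones. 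For the inductive step, if $\zz$ (attached to $\bb_0$, whether $\bb_0=1_{(v,1)}$ or a prime band) is not band-free, it contains as a substring a cyclic permutation of a prime band; choosing the occurrence of such a cyclic permutation that is closest to, and extends maximally towards, the $\bb_0$-end, write $\zz=\ww\,\bb_1^{m_1}\uu_0$ with $m_1\geq1$, with $\bb_1\in Q_0^{\mathrm{Ba}}$, and with $\uu_0$ band-free and a weak half bridge $1_{(v,1)}\to\bb_1$ chosen $\sqsubseteq$-minimal among those compatible with $\zz$. Then $\ww$, viewed as a string hanging off $\bb_1$ after rephasing $\bb_1$ into its fixed representative, is strictly shorter and does not begin with a copy of $\bb_1$, so by the inductive hypothesis it is generated by a path issuing from $\bb_1$; prepending the arrow $\uu_0$ to that path and reading $\bb_1$ with total exponent $m_1$ produces a path generating $\zz$, once $\uu_0$ has been promoted to a genuine half bridge.

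The strategy above is routine; the work lies in three pieces of bookkeeping that I expect to be the real obstacles. First, \emph{promotion}: turning a weak bridge, weak (reverse) half bridge or weak zero bridge into a genuine one needs its own finite descent, using the definition of $\sqsubseteq$ together with the finiteness of the bridge quiver (Theorem \ref{finprimeband} and Proposition \ref{finbandfree}) to ensure termination, and one must track the band exponents thereby introduced --- which, as Remark \ref{weaktononweak} already signals, may be forced down to $-1$. Second, one must pin down a \emph{canonical decomposition} --- no trivial bridge immediately after $\uu_0$, each intermediate band exponent maximal --- so that the peeling step genuinely decreases $|\zz|$ and the recursion terminates. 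Third, the \emph{rephasing}: the cyclic permutation of a prime band that literally occurs inside $\zz$ must be transported to the fixed representative in $Q_0^{\mathrm{Ba}}$ compatibly with the exit of the half bridge and the value of $\sBa$, so that the word assembled from the output path really does reduce back to $\zz$. None of these is conceptually hard, but they are where a careful proof must spend its effort.
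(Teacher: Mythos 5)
Your proposal reaches the same conclusion by a genuinely different route. The paper's proof inducts on the length of $\uu\in H_l(v)$ one syllable at a time, extending a generating weak path by a zero bridge $1_{(v',i')}\xrightarrow{\alpha_{k+1}}1_{(w,j)}$ and then invoking Proposition~\ref{weaktostrong} to ``repair'' the tail of the weak path into a genuine path segment $\bb_n\to 1_{(w,j)}$; the final pass to non-weak arrows is delegated to Remark~\ref{weaktononweak}. You instead do a top-down ``peeling'' induction over a strengthened statement quantified over all vertices $\bb_0$ of $\overline{\Q}^{\mathrm{Ba}}$, stripping off the first maximal band occurrence $\bb_1^{m_1}$ along with the band-free prefix $\uu_0$ and recursing on the tail $\ww$. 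Both strategies can be made to work; the paper's approach has the advantage that all the decomposition work is isolated in the one-step lemma Proposition~\ref{weaktostrong} (whose hypothesis --- $\ww$ band-free, one new syllable $\alpha$ --- keeps the case analysis short, with the ``rephasing'' of a cyclic permutation of $\bb_\ww$ to its fixed representative handled there by the exponent $m=0$), whereas your approach must handle the general decomposition and rephasing inside the inductive step.

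A few places where your outline is thinner than a proof. Your strengthened statement speaks of a string ``generated by a directed path issuing from $\bb_0$'' when $\bb_0\in Q_0^{\mathrm{Ba}}$, but the paper only defines ``generated by'' for paths beginning and ending at lazy vertices, so this notion needs to be spelled out before it can be inducted on. The phrase ``$\uu_0$ chosen $\sqsubseteq$-minimal among those compatible with $\zz$'' is doing unclear work: once you fix the occurrence of a band closest to the $\bb_0$-end, $\uu_0$ is determined as the band-free prefix, and $\sqsubseteq$ (which in the paper is used only to filter weak half bridges down to half bridges via $\sBa$, not as a well-ordering) does not obviously give you a canonical choice beyond that. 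Finally, the three items you defer --- promotion of weak arrows, canonicity of the decomposition so the measure drops, and rephasing the cyclic permutation to the representative while keeping the reduced word equal to $\zz$ --- are exactly the content of the proof; in the paper two of them are absorbed by Proposition~\ref{weaktostrong} and the third by Remark~\ref{weaktononweak}, while in your proposal they remain open promissory notes. The plan is sound, but until those are discharged what you have is a roadmap rather than a proof.
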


The following result gives the inductive way to prove this lemma.
\begin{proposition}\label{weaktostrong}
Suppose $\ww$ is a band-free string and $\alpha$ is a syllable such that the composition $\alpha\ww$ is defined. Then there is a path in $\widetilde{\Q}_0^{\mathrm{Ba}}$ that generates $\alpha\ww$.
\end{proposition}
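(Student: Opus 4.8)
The plan is to split into cases according to whether the string $\alpha\ww$ is itself band-free, after first isolating an auxiliary fact: \emph{every substring of a string that is a cyclic permutation of a band contains, as a substring, a cyclic permutation of a prime band}. I would prove this by induction on the length of the band $\hat{\bb}$ of which the given occurrence $\bb'$ is a cyclic permutation. If $\hat{\bb}$ is prime, take $\bb'$ itself. Otherwise some cyclic permutation of $\hat{\bb}$ has the form $\bb_1\hdots\bb_k$ with $k>1$ and each $\bb_i$ a cyclic permutation of a band; since $k\geq 2$, the cut that produces the particular cyclic permutation $\bb'$ falls strictly inside at most one of the blocks, so at least one full block $\bb_j$ survives as a substring of $\bb'$. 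As $\bb_j$ is a cyclic permutation of a band strictly shorter than $\hat{\bb}$, the induction hypothesis applies to the occurrence of $\bb_j$ inside $\bb'$. Granting this fact, if $\alpha\ww$ is not band-free while $\ww$ is, then the shortest prefix $\bb_1'$ of $\alpha\ww$ which is a cyclic permutation of a band is automatically a cyclic permutation of a \emph{prime} band: the prime sub-occurrence provided above must contain the leftmost syllable $\alpha$ of $\alpha\ww$ (otherwise it lies inside $\ww$, contradicting band-freeness), hence is a prefix of $\alpha\ww$ of length at most $|\bb_1'|$, which forces it to equal $\bb_1'$.

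Now the two cases. If $\alpha\ww$ is band-free, then $\alpha\ww$ is itself a weak zero bridge $1_{(s(\ww),-\sigma(\ww))}\xrightarrow{\alpha\ww}1_{(t(\alpha),\varepsilon(\alpha))}$; the only checks are the $\sigma$- and $\varepsilon$-compatibilities at the two lazy endpoints, which are immediate from $\sigma(\alpha\ww)=\sigma(\ww)$ and $\varepsilon(\alpha\ww)=\varepsilon(\alpha)$. Hence the weak path of length one consisting of this single arrow generates $\alpha\ww$; in particular this covers the case $\ww=1_{(v,i)}$. If $\alpha\ww$ is not band-free, write $\alpha\ww=\bb_1'\ww_0$ with $\bb_1'$ a cyclic permutation of a prime band $\bb_1\in Q_0^{\mathrm{Ba}}$ and $\ww_0$ the (possibly lazy) remaining substring of $\ww$, which is band-free because $\ww$ is. Reading $\ww_0$ into the cyclic representative $\bb_1'$ of the vertex $\bb_1$ exhibits $\ww_0$ as a weak half bridge $1_{(s(\ww),-\sigma(\ww))}\to\bb_1$, and leaving $\bb_1$ at the top syllable of the rotation $\bb_1'$ gives a reverse weak half bridge $\bb_1\to 1_{(t(\alpha),\varepsilon(\alpha))}$. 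Assigning the exponent $1$ to the single band $\bb_1$, the resulting weak path $1_{(s(\ww),-\sigma(\ww))}\xrightarrow{\ww_0}\bb_1\to 1_{(t(\alpha),\varepsilon(\alpha))}$ of length two generates the word $\bb_1'\ww_0=\alpha\ww$.

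I expect the auxiliary fact of the first paragraph to be the main obstacle. The definition of a prime band asserts only that \emph{some} cyclic permutation of a non-prime band factors through cyclic permutations of (shorter) bands, so the work is to show that no matter which cyclic permutation $\bb'$ of $\hat{\bb}$ actually occurs in the ambient string, a full factor $\bb_j$ still sits inside that particular $\bb'$; this is where $k\geq 2$ together with the periodicity of $\bb'$ and the characterisation of cyclic permutations of bands as permutable mixed primitive cyclic strings (Remark \ref{banddef}) enter. A secondary, purely bookkeeping, point is keeping the cyclic representative of the prime band and the $\sigma,\varepsilon$-signs at the lazy endpoints consistent throughout the non-band-free case.
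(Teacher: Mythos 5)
Your first paragraph is sound and in fact makes explicit a step that the paper itself passes over: that the shortest prefix $\bb_1'$ of $\alpha\ww$ which is a cyclic permutation of a band is automatically prime. The auxiliary lemma (every cyclic permutation of a band contains a cyclic permutation of a prime band as a substring), its proof by induction using that with $k\geq 2$ any rotation leaves at least one full factor $\bb_j$ intact, and the subsequent use of band-freeness of $\ww$ to force the prime sub-occurrence to be a prefix of the same length as $\bb_1'$, are all correct. The band-free case is also handled correctly.

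The non-band-free case has a real gap, precisely in what you dismiss as ``purely bookkeeping.'' Writing $\alpha\ww=\bb_1'\ww_0$, you claim $\ww_0$ is a weak half bridge $1_{(s(\ww),-\sigma(\ww))}\to\bb_1$; by definition this requires $\bb_1\ww_0$ to be a string, where $\bb_1\in Q_0^{\mathrm{Ba}}$ is the \emph{fixed} representative. If $\bb_1\neq\bb_1'$, write $\bb_1=\uu'\uu$ and $\bb_1'=\uu\uu'$ with $\uu,\uu'$ nontrivial. Then $\bb_1\ww_0=\uu'\uu\ww_0$ needs the juncture between $\uu$ and $\ww_0$ to be legal, which nothing guarantees; what you know from $\alpha\ww=\uu\uu'\ww_0$ is the juncture between $\uu'$ and $\ww_0$. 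Moreover, even granting the half bridge, with band exponent $1$ the generated word is $\uu_1\bb_1\ww_0=\uu_1\uu'\uu\ww_0$ for whatever reverse weak half bridge $\uu_1$ you choose, and this cannot reduce to $\alpha\ww=\uu\uu'\ww_0$ unless $\uu_1$ is lazy and $\bb_1=\bb_1'$. The paper handles the rotation explicitly: it takes the weak half bridge to be $\uu'\ww_0$ (and $\bb_1\uu'\ww_0=\uu'\uu\uu'\ww_0=\uu'(\alpha\ww)$ is a string, since a forbidden subword would have to contain the mixed word $\uu\uu'$), the reverse weak half bridge to be $\uu$, and band exponent $m=0$, giving $\uu\,\bb_1^{0}\,\uu'\ww_0=\uu\uu'\ww_0=\alpha\ww$. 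So the plan is the paper's, and your justification of primality is a genuine improvement, but the half bridge and exponent must be corrected whenever the chosen representative $\bb_1$ differs from the rotation $\bb_1'$ that actually appears in $\alpha\ww$.
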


\begin{proof}
The string $\ww$ is band-free. If $\alpha\ww$ is band-free then $1_{(w',j')}\xrightarrow{\alpha\ww}1_{(w,j)}$ is the required path for appropriate $w,w'\in Q_0$ and $j,j'\in\{1,-1\}$.

On the other hand if $\alpha\ww$ is not band-free then there is a decomposition $\ww=\ww_1\ww_2$ where $\ww_1$ is of least length such that $\alpha\ww_1$ is a cyclic permutation of a band, say $\bb_\ww$. Due to the minimal length condition the band $\bb_\ww$ is a prime band.

If $\bb_\ww=\alpha\ww_1$ then the path $1_{(w',j')}\xrightarrow{\ww_2}\bb_\ww\xrightarrow{1_{(w,j)}}1_{(w,j)}$ generates $\alpha\ww$. Otherwise let $\bb_\ww=\uu'\uu,\alpha\ww_1=\uu\uu'$. Then the path $1_{(w',j')}\xrightarrow{\uu'\ww_2}\bb_\ww\xrightarrow{\uu}1_{(w,j)}$ generates $\alpha\ww$.
\hfill\end{proof}

\begin{proof} (of Lemma \ref{generateallstrings}) We only prove the first equality. By definition $\mathcal S(1_{(v,i)})\subseteq H_l(v)$, so it remains to prove the reverse inclusion.

Let $\uu\in H_l(v)$. Let $\uu=\alpha_p\hdots\alpha_1$. Let $\uu_k=\alpha_k\hdots\alpha_1$ for $1\leq k\leq p$. We prove that there is a path in the extended weak bridge quiver that generates $\uu_p$ by induction on $p$.

For $p=1$, the string $\uu=\alpha_1$ is itself a zero bridge. So, for induction, we assume the result for $p=k$ and prove for $p=k+1$.

Suppose $\mathcal P_k:=(1_{(v,1)}\xrightarrow{\uu_0}\bb_1\xrightarrow{\uu_1}\bb_2\xrightarrow{\uu_2}\hdots\xrightarrow{\uu_{n-1}}\bb_n\xrightarrow{\uu_n}1_{(v',i')})$ is a path in the extended weak bridge quiver that generates $\uu_k$. Then the weak path obtained by adjoining the zero bridge $1_{(v',i')}\xrightarrow{\alpha_{k+1}}1_{(w,j)}$ to $\mathcal P_k$ generates $\uu_{k+1}$. Using Proposition  \ref{weaktostrong} we could replace the part $\bb_n\xrightarrow{\uu_n}1_{(v',i')}\xrightarrow{\alpha_{k+1}}1_{(w,j)}$ of a weak path by a part of a path from $\bb_n$ to $1_{(w,j)}$ so as to obtain a path $\mathcal P_{k+1}$ in the extended weak bridge quiver generating $\uu_{k+1}$, thus completing the induction step.

Now it remains to show that there is a path in the extended bridge quiver that generates $\uu$. This can be achieved by using Remark \ref{weaktononweak} on the weak zero/half/reverse half/bridges appearing in the path obtained above in $\widetilde{\Q}^\mathrm{Ba}$.
\hfill\end{proof}

\subsection{Meta-$\bigcup$-cyclic string algebras}\label{MetaUcyclic}
We want to restrict our attention to a subclass of string algebras defined using some graph-theoretic property of its bridge quiver.

A \emph{meta-band} in $\Lambda$ is a directed cycle of positive length in its bridge quiver. Note that a string algebra is non-domestic if and only if it contains a meta-band.
\begin{definition}
Say that $\Lambda$ is \emph{meta-$\bigcup$-cyclic} if each connected component of its bridge quiver contains at least two vertices and can be written as a union of meta-bands.  
\end{definition}
A meta-$\bigcup$-cyclic string algebra is clearly non-domestic and each arrow of its bridge quiver lies in a meta-band. Moreover, $\Lambda$ is meta-$\bigcup$-cyclic if and only if each connected component of its bridge quiver is strongly connected and has at least two vertices.

We say a string $\xx$ is \emph{extendable} if it is a substring of a cyclic permutation of a band. The main goal of this section is to give an equivalent characterization (Theorem \ref{purenondomeqdef}) of a meta-$\bigcup$-cyclic string algebra in terms of extensibility of its strings.

We begin by studying which bands occur in the almost periodic strings of the form $\brac1\la(\xx_0)$. Say that a finite string $\uu$ is an \emph{$\la$-string} if it is a mixed string and is a substring of  $\brac1\la(1_{(v,i)})$ for some vertex $v$ and $i\in\{1,-1\}$.

\begin{rmk}\label{ltermstring}
Given any vertex $v$, $i\in\{1,-1\}$, a syllable $\alpha$ such that $\alpha1_{(v,i)}$ is defined, and $n>0$ there exists at most one $\la$-string $\uu$ of length $n$ such that $\alpha$ is the first syllable of $\uu$. 

An $\la$-string can always be extended on the left to an $\la$-string. In particular for an $\la$-string $\uu$ if there exists a direct syllable $\beta$ such that $\beta\uu$ exists then $\beta\uu$ is an $\la$-string, otherwise $\alpha\uu$ is an $\la$-string where $\alpha$ is an inverse syllable such that the composition exists.
\end{rmk}

This remark is the key to prove the following result.
\begin{proposition}\label{evenprim}
Suppose $\brac1\la(\xx_0)=\prescript{\infty}{}{\bb}\uu\xx_0$ for some string $\xx_0$ and band $\bb$, then $\bb$ is prime.
\end{proposition}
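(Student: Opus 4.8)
The plan is to convert the hypothesis into a statement about the deterministic left‑extension of $\la$‑strings and then contradict non‑primality of $\bb$ by a splicing argument.

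\textbf{Step 1 (reformulation).} By the definition of $\la$, each application of $\la$ prepends, at the target end, one inverse syllable---which is forced, being the unique one admitting a prepend there whenever $\la$ is defined---followed by a maximal run of direct syllables. Unwinding this syllable by syllable is exactly the rule of Remark~\ref{ltermstring}: reading outward from $\xx_0$, each new syllable of $\brac1\la(\xx_0)$ is the unique admissible direct prepend if one exists, and otherwise the unique admissible inverse prepend. Hence $\brac1\la(\xx_0)$ is the \emph{only} left $\N$‑string having $\xx_0$ as a suffix all of whose syllables outward of $\xx_0$ are chosen by this greedy rule, and the hypothesis $\brac1\la(\xx_0)=\prescript{\infty}{}{\bb}\uu\xx_0$ says that, read outward from $\uu\xx_0$, the periodic word $\prescript{\infty}{}{\bb}$ is produced by this rule.

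\textbf{Step 2 (splicing at a block boundary).} Suppose $\bb$ is not prime, so that a cyclic permutation of $\bb$ factors as $\bb_1\bb_2\cdots\bb_k$ with $k\ge 2$, each $\bb_i$ a cyclic permutation of a band; then each $\bb_i$ is primitive, each $\bb_i^{2}$ is a string, and each $\prescript{\infty}{}{\bb_i}$ exists. After relabelling, let $\bb_1,\bb_2,\dots$ be the order in which the blocks occur reading outward, and let $\gamma_i$ be the first syllable of $\bb_i$ in that direction. Fix a position in $\brac1\la(\xx_0)$ where a copy of $\bb_i$ is followed outward by a copy of $\bb_{i+1}$, and let $\zz$ be obtained from $\brac1\la(\xx_0)$ by discarding everything beyond that copy of $\bb_i$ and continuing instead by $\prescript{\infty}{}{\bb_i}$; the splice is legitimate since ``$\bb_i$ followed by $\bb_i$'' is a string and $\prescript{\infty}{}{\bb_i}$ exists. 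If the $\bb_j$ are all equal then the chosen cyclic permutation of $\bb$ is $\bb_1^{k}$, contradicting primitivity of $\bb$; otherwise $\zz$ is eventually periodic with minimal period $|\bb_i|<|\bb|$ while $\brac1\la(\xx_0)$ is eventually periodic with minimal period $|\bb|$, so $\zz\neq\brac1\la(\xx_0)$. As $\zz$ agrees with $\brac1\la(\xx_0)$ up to and including that copy of $\bb_i$, and $\brac1\la(\xx_0)$ obeys the greedy rule, $\zz$ must first violate the rule beyond; when $\gamma_i\neq\gamma_{i+1}$ this violation is exactly the syllable after $\bb_i$, where the greedy choice is $\gamma_{i+1}$ but $\zz$ takes the block‑repeat syllable $\gamma_i$. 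Since $\gamma_i,\gamma_{i+1}$ are both admissible prepends there and the greedy rule prefers the unique direct one, we conclude: \emph{for every $i$, either $\gamma_{i+1}=\gamma_i$, or $\gamma_{i+1}$ is direct and $\gamma_i$ is inverse.}

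\textbf{Step 3 (closing up).} Split the cyclic sequence $\gamma_1,\dots,\gamma_k$ into maximal constant runs. If there were at least two runs, then by Step~2 each value terminating a run is forced to be inverse and each value beginning a run is forced to be direct; but within a run all values coincide, so a single value would be both direct and inverse---impossible. Hence all $\gamma_j$ equal one syllable $\gamma$. It remains to exclude this case: since the greedy rule is determined, up to the length of the longest relation (finite by clause~(b) of the definition of a string algebra), by the target‑end segment of the current string, tracing the greedy continuation past successive occurrences of $\gamma$ forces the blocks $\bb_1,\dots,\bb_k$ to share arbitrarily long outward‑initial segments, hence to be powers of a common primitive cyclic word; this contradicts either primitivity of $\bb$ (the $\bb_j$ would all coincide, making $\bb$ their common power) or primitivity of some $\bb_j$. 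Therefore $\bb$ is prime.

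\textbf{Expected main obstacle.} The delicate point is the last part of Step~3: because the greedy rule is memoryless only up to the maximal relation length, rigorously forcing all the blocks $\bb_i$---possibly of different lengths---to coincide (or to be non‑primitive) needs care. A robust route is likely to run the whole argument on a \emph{shortest} non‑prime band whose infinite power is generated greedily, or to recast everything in terms of maximal direct substrings and the left‑maximality property that the $\la$‑construction actually preserves.
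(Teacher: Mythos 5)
Your proof follows the same basic strategy as the paper's: interpret $\brac1\la$ as a greedy left-extension rule, decompose a putative non-prime band into blocks $\bb_i$, study the first syllables $\gamma_i$ of the blocks, and derive a cyclic contradiction. Your splicing device (replacing the continuation past a copy of $\bb_i$ by $\prescript{\infty}{}{\bb_i}$) is a clean way to extract from the greedy rule the dichotomy ``$\gamma_{i+1}=\gamma_i$ or $\gamma_{i+1}$ direct and $\gamma_i$ inverse,'' and the run argument in Step~3 correctly rules out having two or more maximal constant runs in the cyclic sequence. These parts are correct.

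However, the remaining case --- all $\gamma_j$ equal --- is exactly where your proof has a genuine gap, and you acknowledge it. Your sketch ``tracing the greedy continuation $\ldots$ forces the blocks to share arbitrarily long outward-initial segments, hence to be powers of a common primitive cyclic word'' is not carried out, and as stated it does not obviously work: the blocks $\bb_i$ can have different lengths, and a common initial segment of bounded length (or even of length up to the longest relation, which is all that local determinism gives directly) does not force a common period without a Fine--Wilf-type argument that you never make. The paper closes precisely this case cleanly using the \emph{uniqueness} half of Remark~\ref{ltermstring} rather than the greedy half: if $\bb_i$ and $\bb_{i+1}$ share the same first syllable, then the two $\la$-strings $\bb_{i-1}\hdots\bb_1\bb_n\hdots\bb_i\,1_{(v,j)}$ and $\bb_i\hdots\bb_1\bb_n\hdots\bb_{i+1}\,1_{(v,j)}$ have equal length and the same first syllable, hence coincide; but these are distinct cyclic permutations of $\bb'$ differing by a shift of $|\bb_i|<|\bb'|$, so their equality makes $\bb'$ a proper power, contradicting primitivity of $\bb$. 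Plugging this uniqueness argument in to dispose of your ``all $\gamma_j$ equal'' case would complete your proof; without it, Step~3 is incomplete. (Incidentally, note that the paper's route is dual to yours: it first eliminates equal-type consecutive pairs via uniqueness, concludes the types alternate so $n$ is even, and then uses the greedy rule only once to kill the wrap-around; you instead use greedy to show types can only step inverse$\to$direct, kill multiple runs, and are left with the equal case, which is where uniqueness is needed.)
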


\begin{proof}
We will prove this result by contradiction. Suppose $\bb$ is composite. Let a permutation $\bb'$ of $\bb$ be written as $\bb'=\bb_n\hdots\bb_2\bb_1$ where $\bb_i$ are cyclic permutations of bands. By extending $\uu$ if necessary we may assume that $\brac1\la(\xx_0)=\prescript{\infty}{}{\bb'}\uu'\xx_0$. Let $v=s(\bb_i)$ for each $1\leq i\leq n$. By our assumption each $\bb_i$ and all cyclic permutations of $\bb'$ are $\la$-strings. 

If there is $1\leq i<n$ such that the first syllables of $\bb_i,\bb_{i+1}$ are both direct or inverse then their first syllables are same since $\bb_{i+1}\bb_i$ and $\bb_i^2$ are both defined. Now for $\xx=1_{(v,j)}$ with appropriate $j$, we have that both $\bb_{i-1}\hdots\bb_1\bb_n\hdots\bb_i\xx$ and $\bb_i\hdots\bb_1\bb_n\hdots\bb_{i+1}\xx$ are defined. Hence the above remark implies that these two strings are equal which is a contradiction. Therefore $n$ is even and the first syllables of consecutive $\bb_i$ are not both direct or both inverse.

Suppose the first syllable of $\bb_n$ is direct while that of $\bb_1$ is inverse then, since $\bb_n^2$ exists, $\bb_1\bb_n$ cannot be an $\la$-string in view of the latter part of the above remark which is again a contradiction. The case when the first syllable of $\bb_1$ is direct while that of $\bb_n$ is inverse is dealt with in a similar manner. So we can conclude that $n=1$, and hence that $\bb$ is prime.
\hfill\end{proof}

The next result is the key step towards the goal of this section.
\begin{lemma}
Suppose $\Lambda$ is a torsion-free string algebra. Then for a given string $\uu$ there exist strings $\vv,\vv'$ and prime bands $\bb,\bb'$ such that the word $\bb\vv\uu\vv'\bb'$ is defined.
\end{lemma}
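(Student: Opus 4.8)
The plan is to produce the prime bands $\bb,\bb'$ one at a time: since the statement is (left/right)-symmetric, it suffices to find a string $\vv'$ and a prime band $\bb'$ such that $\bb'\vv'$ extends $\uu$ on the right, and then apply the mirror argument on the left. Using the $\ra$-torsion-freeness of $\Lambda$, first I would start building the hammock interval in $H_r(s(\uu))$ (or rather, work inside $H_l$ after inverting) that begins with $\uu$ and grows by applying $\la$ (equivalently $\rb$) repeatedly. Concretely, either $\la(\uu)$ is already defined and we can iterate to form the left $\N$-string $\brac1\la(\uu)$, or $\uu$ is already maximal in the relevant sense; in the first case $\ra$-torsion-freeness guarantees we may keep extending monotonically, so $\brac1\la(\uu)$ exists as a genuine left $\N$-string. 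The key structural input is that a left $\N$-string in any string algebra that arises this way is almost periodic — of the form $\prescript{\infty}{}{\bb}\ww\uu$ for some band $\bb$ and finite string $\ww$ with $\bb\ww$ defined. (This is the point where I expect I might need an auxiliary lemma: the excerpt remarks that left $\N$-strings in \emph{non-domestic} algebras need not be almost periodic in general, so the periodicity must be extracted from the specific way $\brac1\la$ is built, namely via Remark \ref{ltermstring}, which says each $\la$-string of a given length with a prescribed first syllable is unique — forcing eventual periodicity by a pigeonhole argument on the finitely many $\la$-strings of bounded length.)

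Once $\brac1\la(\uu)=\prescript{\infty}{}{\bb}\ww\uu$ is in hand, Proposition \ref{evenprim} immediately upgrades $\bb$ to a prime band. Then I would set $\vv':=\ww$ (after possibly truncating so that $\bb\vv'\uu$ is literally a substring of $\brac1\la(\uu)$, which it is by construction), and this gives the desired one-sided extension $\bb\vv'\uu$. The subtlety to check here is the degenerate case where $\uu$ is \emph{not} extendable on the right at all, i.e. $\la(\uu)$ fails to exist; but in that situation $\uu$ already ends in a way that places it at the maximal end of its hammock, and I claim one can instead pass to a suitable endpoint vertex and use a trivial bridge/half-bridge argument, or simply observe that $\la(\uu)$ failing to exist contradicts $\ra$-torsion-freeness unless the relevant length sequence is non-monotone — in which case a short case analysis (splitting on whether the obstruction is a blocked relation or a forced choice of direct syllable) still produces a band abutting $\uu$. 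This is the main obstacle: handling the boundary cases of the hammock cleanly, since the clean statement "$\brac1\la(\uu)$ always exists" is exactly what torsion-freeness is designed to give, but one must be careful that the hypothesis applies (the length sequence along the interval we build is indeed monotone by the very definitions of $\la$ and $\lb$).

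Finally, by the left-right symmetry of the string-algebra axioms (and of the torsion-free hypothesis, which is both $\la$- and $\ra$-torsion-free), the mirrored argument applied to $\uu^{-1}$ produces a prime band $\bb''$ and a string $\vv''$ with $\bb''\vv''\uu^{-1}$ defined; inverting back gives a prime band $\bb:=(\bb'')^{-1}$ (up to cyclic permutation — a cyclic permutation of a band is handled by choosing the representative in $Q_0^{\mathrm{Ba}}$, which is harmless for the conclusion since we only need \emph{some} prime band) and a string $\vv:=(\vv'')^{-1}$ with $\uu\vv\bb$ defined. Concatenating, the word $\bb\vv\uu\vv'\bb'$ is defined, which is exactly the claim. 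I would close by remarking that no compatibility between the two sides is needed since the extensions happen at opposite ends of $\uu$, so the two one-sided constructions do not interfere.
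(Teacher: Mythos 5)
Your overall strategy is the same as the paper's: extend $\uu$ on one side by iterating a hammock operator, invoke torsion-freeness to keep going, argue almost-periodicity, apply Proposition~\ref{evenprim} to get a prime band, and then symmetrize. The almost-periodicity step (pigeonhole via the uniqueness in Remark~\ref{ltermstring}) is the right argument, and the paper leans on the same fact implicitly, so you are not behind the paper there.

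The genuine gap is your handling of the case where $\la(\uu)$ fails to exist. You offer two vague escape routes (a "trivial bridge/half-bridge argument" and "$\la(\uu)$ failing contradicts torsion-freeness"), and neither works: $\uu$ need not abut a band, and $\la(\uu)$ failing to exist is not by itself a contradiction with torsion-freeness. The clean resolution, which the paper uses, is a dichotomy on the two hammock operators. Assume WLOG that the first syllable $\alpha_1$ of $\uu=\alpha_k\cdots\alpha_1$ is inverse. If some direct $\beta$ gives a string $\uu\beta$, then $\ra(\uu)$ exists and you iterate $\ra$. If no such $\beta$ exists, observe that $\uu=\rb(\alpha_k\cdots\alpha_2)$ (the appended direct string is empty by assumption), and now torsion-freeness lets you iterate $\rb$ starting from $\uu$. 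Either way you obtain $\brac1\ra(\uu)$ or $\brac1\rb(\uu)$ of the form $\uu\vv'{\bb'}^\infty$. The point you miss is the switch to the complementary operator $\rb$ (equivalently $\lb$) when $\ra$ (equivalently $\la$) is unavailable, rather than trying to conclude extensibility by $\la$ alone. You also omit the degenerate case $\uu=1_{(v,i)}$, which the paper handles by first prepending a syllable $\alpha$ (possible because torsion-freeness forces the vertex $v$ to have total degree at least $2$) and then running the argument on $\alpha$.
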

\begin{proof}
First assume that $|\uu|>0$. Suppose $\uu=\alpha_k\alpha_{k-1}\hdots \alpha_1$ with $k\geq 1$. Without loss of generality we may assume that $\alpha_1$ is an inverse syllable. We first show that the string $\uu$ can be extended to the right.

If there is a direct syllable $\beta$ such that $\uu\beta$ is defined then $\ra(\uu)$ exists. Since $\Lambda$ is torsion-free we have that $\brac 1\ra(\uu)$ is also defined, say it has the form $\uu\vv'\bb'^\infty$ for some string $\vv'$ and prime band $\bb'$ thanks to Proposition \ref{evenprim}.

If there is no direct syllable $\beta$ such that $\uu\beta$ is defined then $\rb(\alpha_k\alpha_{k-1}\hdots \alpha_2)=\uu$. Once again since $\Lambda$ is torsion-free we get that $\brac1\rb(\uu)$ is defined, say it has the form $\uu\vv'\bb'^\infty$ for some string $\vv'$ and prime band $\bb'$ thanks to Proposition \ref{evenprim}.

In either case apply the dual argument to the string $\uu\vv'\bb'$ to obtain a string $\vv$ and a prime band $\bb$ such that $\bb\vv\uu\vv'\bb'$ is a string.

When $\uu=1_{(v,i)}$, since $\Lambda$ is torsion-free, the total degree of the vertex $v\in Q_0$ is at least 2. Thus there exists a syllable $\alpha$ such that $\alpha\uu$ is defined, and thus the above argument could be applied to the string $\alpha$.
\hfill\end{proof}

\begin{theorem}\label{purenondomeqdef}
Suppose $\Lambda$ is a torsion-free string algebra such that every connected component of its bridge quiver has at least two vertices. Then it is meta-$\bigcup$-cyclic if and only if every string in $\Lambda$ is a substring of a band.
\end{theorem}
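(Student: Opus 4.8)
The plan is to prove the two implications separately, in both cases passing between a string and a generating path in the (extended) bridge quiver and using a Fine--Wilf type argument to locate prime bands along that path.

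\emph{Meta-$\bigcup$-cyclic $\Rightarrow$ every string is a substring of a band.} Recall that meta-$\bigcup$-cyclicity is equivalent to each connected component of the bridge quiver $\Q^{\mathrm{Ba}}$ being strongly connected with at least two vertices. Let $\uu$ be a string. By the lemma preceding the statement there are strings $\vv,\vv'$ and prime bands $\bb,\bb'$ such that $\bb\vv\uu\vv'\bb'$ is a string; in fact its proof shows $\bb^{\,k}\vv\uu\vv'\bb'^{\,k}$ is a string for every $k\geq 1$. Fix $k$ larger than the lengths of all prime bands and apply Lemma \ref{generateallstrings}: a generating path of $\bb^{\,k}\vv\uu\vv'\bb'^{\,k}$ in $\overline{\Q}^{\mathrm{Ba}}$ must, by a periodicity (Fine--Wilf) argument applied to the blocks $\bb^{\,k}$ and $\bb'^{\,k}$, visit the vertex $\bb'$ and the vertex $\bb$, so that the portion of the path between these two visits is a directed path in $\Q^{\mathrm{Ba}}$ whose generated string contains $\vv\uu\vv'$ (hence $\uu$); in particular $\bb,\bb'$ lie in one component $\mathcal C$. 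Since $\mathcal C$ is strongly connected with at least two vertices there is a directed path between $\bb$ and $\bb'$ of positive length in the remaining direction, and concatenating it with the previous one gives a closed walk in $\Q^{\mathrm{Ba}}$ through $\bb$ and $\bb'$ of positive length. Traversing this closed walk once with sufficiently large exponents on its prime-band vertices produces a cyclic string $\mathfrak d$ which is mixed (it contains $\bb$), primitive once the exponent of $\bb$ is made dominant, and permutable, hence a band by Remark \ref{banddef}---this is exactly the mechanism by which a meta-band yields arbitrarily long bands. As the portion generating $\vv\uu\vv'$ survives in $\mathfrak d$, the string $\uu$ is a substring of a cyclic permutation of $\mathfrak d$.

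\emph{Every string is a substring of a band $\Rightarrow$ meta-$\bigcup$-cyclic.} Each component of $\Q^{\mathrm{Ba}}$ has at least two vertices by hypothesis, so it suffices to prove each component is strongly connected; running along an undirected walk, this reduces to producing, for every bridge $\bb_1\xrightarrow{\uu}\bb_2$, a directed path from $\bb_2$ back to $\bb_1$. For $k\geq 1$ the word $\bb_2^{\,k}\uu\bb_1^{\,k}$ is again a string (a bridge may be iterated against the two bands it connects), so by hypothesis it is a substring of $\mathfrak d^{\,\infty}$ for some band $\mathfrak d$; equivalently $\mathfrak d^{\,\infty}$ contains, for a suitable string $\zz$, the substring $\bb_1^{\,k}\zz\bb_2^{\,k}$. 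Applying Lemma \ref{generateallstrings} to the string $\bb_1^{\,k}\zz\bb_2^{\,k}$ and taking $k$ larger than all prime-band lengths, the same Fine--Wilf argument forces its generating path to visit, in order, the vertex $\bb_2$ and the vertex $\bb_1$, which is the required directed path from $\bb_2$ to $\bb_1$.

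\emph{Main obstacle.} The technical core, used in both directions, is the periodicity argument: a power $\bb^{\,k}$ of a prime band $\bb$, once $k$ exceeds the (finitely many) prime-band lengths, cannot be swallowed by the string a path generates without that path actually visiting the vertex $\bb$. The other points needing care are the iterability of a bridge ($\bb_2^{\,k}\uu\bb_1^{\,k}$ remaining a string) and the fact that a closed walk in the bridge quiver with large exponents generates a genuine (primitive, permutable) band; these, together with the hypothesis that each component has at least two vertices, are exactly what makes the closed walk in the forward direction nontrivial and the two-way connectivity work.
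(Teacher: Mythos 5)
Your proof is correct and follows the same basic strategy as the paper: both directions rest on the lemma giving $\bb\vv\uu\vv'\bb'$ a string (forward) or its absence (converse), on Lemma \ref{generateallstrings}, and on strong connectedness. The differences are cosmetic rather than substantive, and you actually fill in a step the paper elides. In the forward direction the paper simply says ``since the connected component containing $\bb,\bb'$ is strongly connected'' --- taking for granted that $\bb$ and $\bb'$ lie in the same component --- picks a directed path $\mathcal P$ from $\bb$ to $\bb'$ in $\Q^{\mathrm{Ba}}$, and concatenates it directly with the string $\vv\uu\vv'$ to produce the band $\vv\uu\vv'\bb'\uu_m\bb_{m-1}\cdots\uu_1\bb^p$; you instead pass through a generating path of $\bb^k\vv\uu\vv'\bb'^k$, use the Fine--Wilf argument to force a visit to the vertices $\bb'$ and $\bb$ (thereby \emph{proving} they share a component), and then close up with a walk in the opposite direction. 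That periodicity lemma --- a long power of a prime band in a generated string forces the generating path to visit that prime band, because prime-band lengths and band-free-string lengths are absolutely bounded --- is exactly the ingredient the paper's Proposition \ref{joint}/\ref{finbandfree} machinery is designed to support, so your explicit invocation of it is appropriate and adds rigor. In the converse you argue directly (every bridge admits a return path, via $\bb_1^k\zz\bb_2^k$ and a generating path) where the paper argues by contrapositive (a non-reversible bridge gives $\bb_2\uu\bb_1$ which cannot extend to a band); these are logically equivalent uses of the same mechanism. Overall: same approach, slightly more elaborate execution, with one genuine gap (same-component) made explicit.
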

\begin{proof}
Suppose $\uu\in\STR\Lambda$. Using the above lemma there exist strings $\vv,\vv'$ and prime bands $\bb,\bb'$ such that $\bb\vv\uu\vv'\bb'$ is a string.

Since the connected component of the bridge quiver containing $\bb,\bb'$ is strongly connected there is a path from $\bb$ to $\bb'$, say $\mathcal P:=\bb\xrightarrow{\uu_1}\bb_2\xrightarrow{\uu_2}\hdots\bb_{m-1}\xrightarrow{\uu_m}\bb'$. Now the string of the form $\vv\uu\vv'\bb'\uu_m\bb_{m-1}\hdots\uu_2\bb_2\uu_1\bb^p$ is a band for a suitable $p\geq 1$.

For the converse suppose that there is a connected component of the bridge quiver of $\Lambda$ that is not strongly connected.

Then we have a bridge $\bb_1\xrightarrow{\uu}\bb_2$ such that there is no directed path from $\bb_2$ to $\bb_1$ in the bridge quiver. Consider the string $\bb_2\uu\bb_1$. If it can be extended to a band $\uu'\bb_2\uu\bb_1$, then using Lemma \ref{generateallstrings} the string $\uu'$ would be generated by a path from $\bb_2$ to $\bb_1$ in the bridge quiver which is a contradiction to our choice.
\hfill\end{proof}

The definition of a prime band also yields the following result.
\begin{corollary}
Each arrow of a torsion-free meta-$\bigcup$-cyclic string algebra lies on a prime band.
\end{corollary}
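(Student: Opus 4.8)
The plan is to derive this as an immediate consequence of Theorem \ref{purenondomeqdef} together with the definition of a prime band. First I would check that the hypotheses of Theorem \ref{purenondomeqdef} are in force: a torsion-free meta-$\bigcup$-cyclic string algebra is torsion-free by assumption, and by the definition of meta-$\bigcup$-cyclic each connected component of its bridge quiver has at least two vertices. Hence Theorem \ref{purenondomeqdef} applies and every string of $\Lambda$ is a substring of a band.

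Next I would record the elementary fact that every band decomposes, up to cyclic permutation, into prime bands: if $\bb$ is prime there is nothing to do, and otherwise, by the definition of a prime band, some cyclic permutation of $\bb$ equals a concatenation $\bb_1\bb_2\cdots\bb_k$ with $k>1$ where each $\bb_i$ is a cyclic permutation of a band, necessarily of length strictly smaller than that of $\bb$; an induction on length then expresses each $\bb_i$, and hence $\bb$, as such a concatenation of cyclic permutations of prime bands.

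Now take an arrow $b\in Q_1$. The syllable $b$ is a string of length $1$, so by the first paragraph it is a substring of some band $\bb$; equivalently, $b$ occurs as a syllable of some cyclic permutation of $\bb$. Writing (a cyclic permutation of) $\bb$ as $\bb_1\cdots\bb_k$ with each $\bb_i$ a cyclic permutation of a prime band, the syllables of this word are exactly the syllables of $\bb$, and each of them belongs to a unique block $\bb_i$; in particular a single syllable cannot straddle the boundary between two consecutive blocks. Therefore $b$ is a syllable of some $\bb_i$, i.e. a (length-$1$) substring of a cyclic permutation of a prime band, which is precisely the assertion that $b$ lies on a prime band.

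I do not anticipate a genuine obstacle here: all the mathematical content sits in Theorem \ref{purenondomeqdef}, and the remainder is bookkeeping. The only point meriting a line of care is the last step — tracking the syllable $b$ through the passage to a cyclic permutation of $\bb$ and then into one of its prime-band blocks — which is handled by the observation that cyclic permutations of a band share the same multiset of syllables, each lying in exactly one block of any prime-band decomposition.
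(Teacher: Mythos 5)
Your approach — use Theorem \ref{purenondomeqdef} to conclude that every arrow $b$ is a substring of a band, then descend from that band to a prime band — is the argument the paper has in mind; the paper states the corollary without a written proof, attributing it simply to the definition of a prime band together with the theorem, which is exactly what you do.

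There is, however, a small gap in the auxiliary claim that ``every band decomposes, up to cyclic permutation, into prime bands'' as you prove it. Once you write a cyclic permutation $\bb'$ of $\bb$ as $\bb_1\cdots\bb_k$ with each $\bb_i$ a cyclic permutation of a band $\hat\bb_i$ of smaller length, the inductive hypothesis only tells you that \emph{some} cyclic permutation of $\hat\bb_i$ factors into cyclic permutations of prime bands — not that the particular rotation $\bb_i$ appearing in your factorization of $\bb'$ does. A different rotation of $\hat\bb_i$ cannot, in general, be spliced back into the word $\bb_1\cdots\bb_k$, so the induction as stated does not close. For the corollary this is immaterial, because you do not actually need the full decomposition: instead run the induction on the length of a band containing $b$. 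Take $\bb$ of minimal length such that $b$ is a syllable of some cyclic permutation of $\bb$. If $\bb$ were not prime, write a cyclic permutation $\bb'=\bb_1\cdots\bb_k$ with $k>1$, each $\bb_i$ a cyclic permutation of a band $\hat\bb_i$ with $|\hat\bb_i|<|\bb|$. Since all cyclic permutations of $\bb$ carry the same multiset of syllables, $b$ is a syllable of $\bb'$, hence lies in a single block $\bb_i$, hence is a syllable of a cyclic permutation of the strictly shorter band $\hat\bb_i$ — contradicting minimality. So $\bb$ is prime, which is what was wanted. Your final syllable-tracking step, that a length-one substring cannot straddle a block boundary, is fine and is exactly the observation that makes this descent work.
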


We close this section by showing that no ``complicated'' real term labels a graph map between finite dimensional string modules for a meta-$\bigcup$-cyclic string algebra.
\begin{theorem}\label{norealterm}
For a meta-$\bigcup$-cyclic string algebra $\Lambda$ and any $\vv\in\STR\Lambda$ there is no string $\uu$ such that the following equation holds
\begin{equation}\label{eq1}
    \brac1\lb(\uu)=\brac1\la(\vv).
\end{equation}
\end{theorem}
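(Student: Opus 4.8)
The plan is to argue by contradiction, exploiting the meta-$\bigcup$-cyclic hypothesis to force a contradiction from the existence of a solution to Equation \eqref{eq1}. So suppose there is $\vv\in\STR\Lambda$ and a string $\uu$ with $\brac1\lb(\uu)=\brac1\la(\vv)$. Since $\Lambda$ is meta-$\bigcup$-cyclic it is non-domestic, so by the discussion after Proposition \ref{evenprim} the common value, call it $\zz$, is a left $\N$-string which is almost periodic: $\zz=\prescript{\infty}{}{\bb}\ww\xx_0$ for a band $\bb$ and finite string $\ww$. By Proposition \ref{evenprim}, $\bb$ is a prime band. The idea is that the "$\la$-side" of $\zz$ witnesses $\bb$ appearing as the eventual periodicity of the increasing sequence $\la^n(\vv)$, so $\bb$ is the prime band reached by some half bridge from $\vv$ (or from the appropriate $1_{(v,i)}$); simultaneously the "$\lb$-side" of $\zz$ says the same left $\N$-string is produced by the increasing sequence $\lb^n(\uu)$. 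But recall from the remark before \S\ref{BridgeQuiver} that $\lb(\la(\vv))$ and $\la(\lb(\vv))$ are never defined: the operations $\la$ and $\lb$ move in genuinely opposite directions within the hammock poset $H_l(v)$ (one increases length, one decreases), so a single left $\N$-string cannot be simultaneously the $\sup$ of an $\la$-tower and the $\sup$ of an $\lb$-tower unless something collapses.

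The key step I expect is to translate Equation \eqref{eq1} into the statement that $\zz$ is the limit of an $\la$-tower \emph{and} of an $\lb$-tower, and then to read off what this forces about the bridge quiver. The $\la$-tower $\la^n(\vv)\to\zz$ means: writing the periodic part as $\prescript{\infty}{}{\bb}$, the prime band $\bb$ is such that $\bb\ww\vv$ is (eventually) a string and each truncation $\la^n(\vv)$ is obtained from $\zz$ by the $\la$-rule; in bridge-quiver language (Lemma \ref{generateallstrings} and the construction of half bridges in \S\ref{primebridge}) this says there is a half bridge $\vv\to\bb$, equivalently $\bb$ sits at the "top" of the relevant interval. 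Dually, the $\lb$-tower $\lb^n(\uu)\to\zz$, after passing to inverses, says $\zz^{-1}$ is an $\ra$-tower limit ending in $\prescript{\infty}{}{\bb}$ read backwards, i.e. there is a reverse half bridge $\bb\to\vv$ — or more precisely $\bb$ occurs as the periodicity seen from the $\lb$-direction. The punchline I want is that these two half bridges have \emph{opposite} values of $\sBa$ at $\bb$ (one exit is forced direct, the other inverse, by conditions (a)–(c) on $\sigma,\varepsilon$ and the definition of $\sBa$ via the exit syllable), yet they must both be "maximal" half bridges in their respective $\sqsubseteq$-orders; this is precisely the kind of configuration that cannot occur, because having an $\la$-tower and an $\lb$-tower meeting at the same periodic band $\bb$ would let us splice them into a strictly longer string on both sides of $\bb$, contradicting that $\zz$ was already built to be a maximal (limit) string via $\la$ on one end. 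Concretely: from $\brac1\lb(\uu)=\prescript{\infty}{}{\bb}\ww\xx_0=\brac1\la(\vv)$ one extracts a finite substring of $\zz$ of the form $\bb'\ttt\bb''$ with $\bb',\bb''$ cyclic permutations of $\bb$, such that on its left it extends (via $\la$) while on its right it also extends (via the $\lb$-reading), forcing $\bb$ itself to be non-prime — it would decompose as a product of shorter cyclic bands coming from the two towers — contradicting Proposition \ref{evenprim}.

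The main obstacle, and where I'd spend the real work, is the bookkeeping that makes "the $\lb$-side of $\zz$" precise: $\lb$ is defined on $H_l(v)$ by \emph{removing} a maximal direct tail, so a tower $\uu,\lb(\uu),\lb^2(\uu),\dots$ is an \emph{increasing} sequence in $<_l$ whose length sequence need not be monotone in an obvious way, and I need the meta-$\bigcup$-cyclic (hence torsion-free-adjacent, or at least non-domestic) hypothesis to guarantee its limit is almost periodic with the \emph{same} band $\bb$ that appears on the $\la$-side — the matching of the two bands is the crux. I would handle this by invoking Remark \ref{ltermstring} (uniqueness of $\la$-strings of a given length with a given first syllable) together with its obvious $\lb$-analogue: since $\zz$ determines all its finite left-truncations, and each sufficiently long truncation lies in the interval between consecutive tower elements on \emph{both} towers, the two towers are cofinal in the same $<_l$-direction, pinning the periodicity. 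Once the bands match, the contradiction with primality via Proposition \ref{evenprim} — or directly with the non-existence of $\lb\la$ and $\la\lb$ compositions noted in the remark before \S\ref{BridgeQuiver} — should close the argument. An alternative, possibly cleaner route: show directly that $\brac1\lb(\uu)=\brac1\la(\vv)$ would exhibit $\bb$ as both "entered by a direct exit" and "entered by an inverse exit" in the sense of $\sBa$, then observe that in a meta-$\bigcup$-cyclic algebra every component of the bridge quiver is strongly connected (so every such half bridge extends to a meta-band), and a meta-band cannot change the $\sBa$-parity of its own band — contradiction.
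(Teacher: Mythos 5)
Your opening reduction matches the paper: pass to the periodic tail and invoke Proposition \ref{evenprim} to get a prime band $\bb$ with $\brac1\la(\vv)=\prescript{\infty}{}{\bb}\ww\vv$. But after that the proposal goes in the wrong direction, and the two contradictions you reach for do not hold up.

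First, the sign of the key observation is reversed. The fact that $\brac1\la(1_{(v,i)})=\prescript{\infty}{}{\bb}$ does not ``exhibit $\bb$ as entered by a direct exit''; it shows precisely that $\bb$ has \emph{no} direct exit. Indeed $\la$ always appends a maximal direct string after each inverse arrow, so if some bridge out of $\bb$ had a direct exit syllable $\alpha$, then at the first step where $\la$ reached the exit vertex it would have absorbed $\alpha$ into its direct block and left $\prescript{\infty}{}{\bb}$; concretely, either the direct block of $\prescript{\infty}{}{\bb}$ ending there would fail to be maximal, or an empty direct block would fail to be maximal, contradicting the $\la$-limit. Dually, $\brac1\lb(\uu)=\prescript{\infty}{}{\bb}$ shows $\bb$ has no inverse exit. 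So Equation \eqref{eq1} forces $\bb$ to have \emph{no exit at all}. The contradiction is then immediate and elementary: because $\Lambda$ is meta-$\bigcup$-cyclic, the component of $\bb$ in the bridge quiver has at least two vertices and is strongly connected, so there is a non-trivial bridge out of $\bb$, which must have an exit syllable, and that syllable is either direct or inverse. This is the paper's argument, and nothing about primality or a parity mismatch between two half bridges is needed.

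Second, the two concrete contradictions you sketch are not valid. (i) You claim one can extract a substring $\bb'\mathfrak t\bb''$ of $\zz$ that forces $\bb$ to decompose into shorter bands, contradicting primality. But in the periodic region every such substring is just $\bb^k$ for some $k$, and having both the $\la$-tower and the $\lb$-tower converge to $\prescript{\infty}{}{\bb}$ gives no factorization of $\bb$ as a product of shorter cyclic permutations of bands; the hypothesis of Proposition \ref{evenprim} is already satisfied on the $\la$-side alone, and that proposition is what you used to \emph{get} primality of $\bb$, so you cannot then contradict it this way. (ii) The alternative via $\sBa$ asserts that ``a meta-band cannot change the $\sBa$-parity of its own band,'' but this is not a statement proved (or needed) anywhere, and ``entered by a direct/inverse exit'' conflates exits of bridges \emph{from} $\bb$ with half bridges \emph{into} $\bb$. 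The meta-$\bigcup$-cyclic hypothesis is used, correctly, only to guarantee the existence of some exit from $\bb$; it is not used to constrain the $\sBa$-parity of a cycle. Finally, the remark that $\lb(\la(\vv))$ and $\la(\lb(\vv))$ are undefined concerns finite composites and does not by itself say anything about the limits $\brac1\la$ and $\brac1\lb$; in fact Equation \eqref{eq1} does have solutions for some domestic string algebras, so any correct proof must make essential use of the meta-$\bigcup$-cyclic hypothesis through the exit-existence argument above.
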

\begin{proof}
Suppose $\brac1\la(\vv)$ exists for some $\vv\in\STR\Lambda$. From Theorem \ref{evenprim} we know that $\brac1\la(\vv)$ is of the form $\prescript{\infty}{}{\bb}\vv'\vv$ for a string $\vv'$ and a prime band $\bb$.

Since $\bb$ starts with an inverse syllable we also have $\brac1\la(1_{(v,i)})=\prescript{\infty}{}{\bb}$, where $v=s(\bb)$ and $i=-\sigma(\bb)$. It is clearly seen that Equation \eqref{eq1} has a solution if and only if the following equation has a solution. 
\begin{equation}\label{eq2}
\brac1\lb(\uu)=\brac1\la(1_{(v,i)}).
\end{equation}

Note that if a prime band $\bb$ has a direct (resp. inverse) syllable as an exit then for any substring $\uu$ of $\bb$ there is $n>0$ such that $\la^n(\uu)$ (resp. $\lb^n(\uu)$) is not a substring of $\bb^2$.

The equation $\brac1\la(1_{(v,i)})= \prescript{\infty}{}{\bb}$ implies that $\bb$ does not have a direct exit syllable. But since $\Lambda$ is meta-$\bigcup$-cyclic $\bb$ must have an exit and it must be an inverse syllable. Since any solution $\uu$ of Equation \eqref{eq2} is a substring of a finite power of $\bb$, in view of the discussion in the previous paragraph this equation does not have a solution.
\hfill\end{proof}

The result above can also be proved for the pairs $(\la,\lb),(\ra,\rb)$ and $(\rb,\ra)$.

\section{Computation of ranks of graph maps}\label{comprank}

\subsection{Recursive systems}\label{recsystem}
We want to develop a language to talk about graph maps between string modules. Given $\vv \in \STR\Lambda$ such that $\la(\vv)$ is defined and is equal to $a_k\hdots a_2a_1B\vv$, we set $\la_i(\vv) := a_{k-i}\hdots a_2a_1B\vv$ for $0\leq i\leq k$. Note that $\la_0(\vv)=\la(\vv)$. The notations $\lb_i(\vv), \ra_i(\vv), \rb_i(\vv)$ are defined in a similar way.
\begin{rmk}\label{prodli}
Since any string $\uu$ starting with an inverse syllable can be written as $\vv_pB_p\hdots \vv_2B_2\vv_1B_1$, where, for $1 \leq j \leq p$, $B_j$ are inverse syllables and $\vv_j$ are direct strings possibly of zero length, $\uu$ can be written as $\la_{k_p}\la_{k_{p-1}}\hdots \la_{k_1}(1_{(v,i)})$ which is a short hand for $\la_{k_p}(\la_{k_{p-1}} \hdots (\la_{k_1}(1_{(v,i)}))\hdots)$, where $v = s(B_1)$ and $i = -\sigma(B_1)$.
\end{rmk}

In case $\Lambda$ is a torsion-free domestic string algebra and $\la_i(\vv)$ exists for some $\vv \in \STR \Lambda$, there exists a real term $\mu$ such that $\mu(\vv) = \la_i(\vv)$ \cite{SK}.

As in \S\ref{Hammock}, we can ask if the equation 
\begin{equation*}
    \brac 1{\lb_j}(\xx)=\brac 1{\la_i}(\vv)
\end{equation*}
has a solution $\xx$ for a given string $\vv$. As described there we also assign meaning to the expressions $\brac{\lb_j}{\la_i}$ and $\brac{\la_i}{\lb_j}$ by replacing $\la=\la_0$ by $\la_i$ and $\lb=\lb_0$ by $\lb_j$. In a similar manner, we can also assign meaning to the expressions $\brac{\tau}{\mu}$ and $\brac{\mu}{\tau}$ where $\tau$ is a finite composition of $\la_{i_p}$ and $\mu$ is a finite composition of $\lb_{j_q}$. The expressions of the form $\tau,\mu, \brac{\tau}{\mu}$ and $\brac{\mu}{\tau}$ as defined above will be called \emph{complex terms}. We further say that the expressions of the form $\tau$ and $\brac{\mu}{\tau}$ (resp. $\mu$ and $\brac{\tau}{\mu}$) are \emph{$\la$-terms} (resp. \emph{$\lb$-terms}).

\begin{rmk}
Note that for strings $\vv,\xx$ such that the composition $\vv\xx$ is defined and $\la_i(\vv)$ exists, we have that $\la_i(\vv\xx)=\la_i(\vv)\xx$ whenever either (and hence both) sides are defined. Thus we denote the canonical inclusion map $\M(\vv) \to\M(\la_i(\vv))$ by $\La_i(v,j)$ where $v=t(\vv)$ and $j=\epsilon(\vv)$.
\end{rmk}
As in Remark \ref{prodli} if $\uu$ is a string starting with an inverse syllable and $j=-\sigma(\uu)$ then the canonical inclusion map $\M(1_{(v,j)})\to\M(\uu)$ can be written as a composition $\La_{k_p}\La_{k_{p-1}}\hdots \La_{k_1}(v,j)$. We say that the complex term labeling this graph map is $\la_{k_p}\la_{k_{p-1}}\hdots \la_{k_1}(v,j)$, where we also emphasize on the pair $(v,j)$.
\begin{definition}
Let $f$ be a canonical inclusion map between two string modules of $\Lambda$ such that the complex term $\tau:=\tau(v,j)$ labeling $f$ is an $\la$-term. We say that $\tau$ is a \emph{recursive term} if there are $\la$-terms $\tau_1,\tau_2$ and an $\lb$-term $\mu$ such that
\begin{equation*}
\tau(1_{(v,j)})= \brac{\mu}{\tau_1\tau\tau_2}(1_{(v,j)});
\end{equation*}
the data of the terms $\tau,\tau_1,\tau_2,\mu$ satisfying the above equation is said to constitute a \emph{recursive system}.
\end{definition}
\begin{example}
Let $\Lambda=GP_{2,3}$. Consider the graph map $f:\M(1_{(v,-1)})\to\M(B)$ labeled by the term $\la_1(v,-1)$. Then $\la_1$ satisfies the recursive equation $$\brac {\lb_1\lb} {\la\la_1\la}(1_{(v,-1)}) = \la_1(1_{(v,-1)}).$$
\end{example}
Maps labeled by recursive terms belong to the stable radical of $\Lambda$ which is the r of the next lemma. We will use the phrase `rank of a term' to refer to the rank of the graph map that it labels when the graph map is clear from the context.
\begin{lemma}\label{recursiveinfty}
Let $f$ be a canonical inclusion map between two distinct string modules of $\Lambda$ such that the complex term $\tau:=\tau(v,j)$ labeling $f$ is a recursive $\la$-term. Then $\rk(f)=\infty$.
\end{lemma}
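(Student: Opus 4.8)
The plan is to exploit the recursive equation $\tau(1_{(v,j)})=\langle\mu\mid\tau_1\tau\tau_2\rangle(1_{(v,j)})$ to build, from the single graph map $f$, a ``factorizable system'' in the sense of \cite{MorsBetwnFinDimInfDimRep}: a countable family of string modules and graph maps between them through which $f$ factors arbitrarily often in the radical, and then to invoke the arithmetic of ordinal powers (Lemma \ref{rkarithmetic}) to conclude $\rk(f)=\infty$. Concretely, I would first unpack what the recursive term asserts. The term $\tau$ labels the canonical inclusion $\M(\vv)\hookrightarrow\M(\tau(\vv))$ for every string $\vv$ for which $\tau(\vv)$ is defined, and in particular for $\vv=1_{(v,j)}$ it gives $f$. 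The equation says that the $\la$-string $\tau(1_{(v,j)})$ coincides with the string obtained by the $\la$-term $\tau_2$, then $\tau$, then $\tau_1$, and finally truncating from the left by the $\lb$-term $\mu$; so there is a string $\ww$ (the common ``limit'' side) with $\tau_1\tau\tau_2(1_{(v,j)})$ a factor substring of $\ww$ and $\tau(1_{(v,j)})$ an image substring of $\ww$, both obtained by taking $\la$- and $\lb$-closures.

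The heart of the argument is the following factorization. Applying $\tau$ to the string $\tau_2(1_{(v,j)})$ rather than to $1_{(v,j)}$ itself, functoriality of the $\la_i$ operators (the Remark preceding the definition of recursive term, $\la_i(\vv\xx)=\la_i(\vv)\xx$) lets me realize a graph map $f'\colon\M(\tau_2(1_{(v,j)}))\to\M(\tau\tau_2(1_{(v,j)}))$ labeled again by $\tau$. The recursive equation then packages three things: (i) a map of type SB-like truncation data coming from $\mu$ composed with (ii) the inclusion $f'$ (which is ``the same map $\tau$'' one recursion level deeper) composed with (iii) inclusions/projections coming from $\tau_1,\tau_2$, whose composite is again $f$ up to the canonical identifications. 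Iterating, I get for every $n$ a factorization $f=g_n\circ f^{(n)}\circ h_n$ where $f^{(n)}$ is a copy of ``$\tau$ applied $n$ levels in'', and $g_n,h_n$ are nontrivial radical maps (they involve at least one genuine $\la_i$ or $\lb_j$ step, hence lie in $\rad{}$ — in fact $h_n$, built from $\tau_2$ iterated, grows without bound in length, and similarly $g_n$). The point is that each recursion level contributes at least one extra radical factor to both the ``left'' and the ``right'' of the copy of $\tau$.

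To turn this into a rank bound I argue by induction on ordinals that $\rk(f)\geq\nu$ for every ordinal $\nu$. The key engine is Lemma \ref{rkarithmetic}: if in a factorization $f=hg$ both $g$ and $h$ have rank $\geq\nu\geq 1$, then $\rk(f)>\nu$. Suppose for contradiction $\rk(f)=\nu<\infty$, so $f\in\rad\nu\setminus\rad{\nu+1}$. Using the $n$-fold factorization with $n$ large, the ``outer'' pieces $g_n,h_n$ accumulate radical length; more precisely, because every recursion step inserts a copy of the bracket $\langle\mu\mid\tau_1(-)\tau_2\rangle$, the inner copy $f^{(n)}$ of $\tau$ is sandwiched between maps that are each themselves $n$-fold compositions of radical maps, so $\rk(g_n),\rk(h_n)\to\infty$ as $n\to\infty$. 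Choosing $n$ with $\rk(g_n),\rk(h_n)\geq\nu$, Lemma \ref{rkarithmetic} (applied to the factorization $f=g_n\cdot(f^{(n)}h_n)$, noting $\rk(f^{(n)}h_n)\geq\rk(h_n)\geq\nu$ since $f^{(n)}$ is a monomorphism — or more carefully, applied stepwise) forces $\rk(f)>\nu$, a contradiction. Hence $f\in\rad\infty$, i.e.\ $\rk(f)=\infty$.

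The main obstacle I anticipate is the bookkeeping that makes ``the inner copy of $\tau$ at recursion level $n$'' literally a factor of $f$ through genuinely different string modules, so that the outer factors $g_n,h_n$ are certified to lie in $\rad{}$ and, crucially, that their ranks grow. Two subtleties need care: first, that the canonical identifications $\M(\vv\xx)\cong$ (image/factor of $\M(\la_i(\vv)\xx)$) used to transport $\tau$ along $\tau_2$ are compatible with composition — this is exactly where the Remark on $\la_i(\vv\xx)=\la_i(\vv)\xx$ and the description of graph maps as inclusion-after-projection (\S\ref{repth}) do the work; second, that at each stage the ``bracket'' $\langle\mu\mid-\rangle$ contributes a \emph{nontrivial} radical map rather than an identity, which follows because $f$ is a map between \emph{distinct} string modules (the hypothesis $\M(\vv)\neq\M(\tau(\vv))$) so $\tau$ is not the empty term, whence at least one of $\tau_1,\tau_2,\mu$ is nontrivial and the recursion genuinely lengthens the strings. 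Once this is pinned down, the rank computation is a routine transfinite induction via Lemma \ref{rkarithmetic}.
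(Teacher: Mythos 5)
Your strategy is recognizably the same as the paper's: exploit the recursive equation together with Lemma \ref{rkarithmetic} to force $\rk(f)=\infty$. But the two executions diverge, and yours has a gap in the rank bookkeeping.

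The paper's proof does \emph{not} unroll the recursion. It uses a single factorization, namely the term-level identity $\brac{\mu}{\tau_1\tau\tau_2} = \brac{\mu}{\tau_1\tau\tau_2}\tau_1\tau\tau_2$, and combines it with two ideal-property inequalities to pin down $\rk(\tau) = \rk(\tau_1\tau\tau_2)$: the inequality $\rk(\tau_1\tau\tau_2)\geq\rk(\tau)$ because $\tau$ is a compositional factor of $\tau_1\tau\tau_2$, and the reverse inequality because $\tau$ equals $\brac{\mu}{\tau_1\tau\tau_2}$, which by the displayed identity lies in the ideal generated by $\tau_1\tau\tau_2$. Having these two sides of the same rank, a single application of Lemma \ref{rkarithmetic} to that one factorization yields $\rk(\tau)>\rk(\tau_1\tau\tau_2)$ whenever the latter is finite, an immediate contradiction. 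There is no $n$-fold iteration and no transfinite induction.

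You, instead, unroll the recursion into an explicit $n$-level factorizable system $f = g_n\circ f^{(n)}\circ h_n$, which is closer to the uncompactified picture of \cite{MorsBetwnFinDimInfDimRep} that the paper explicitly says recursive terms are condensing. That is a legitimate alternative route, but as written your rank argument breaks for ordinals $\nu\geq\omega$. You claim ``$\rk(g_n),\rk(h_n)\to\infty$ as $n\to\infty$'' because each is built from $n$ radical factors; but $n$ compositions of maps of rank $\geq 1$ only gives rank $\geq n$, and every finite $n$ stays strictly below $\omega$. So for any fixed $\nu\geq\omega$ you cannot ``choose $n$ with $\rk(g_n),\rk(h_n)\geq\nu$'' merely by taking $n$ large; the step is circular unless you already know that the inner copy of $\tau$ at the shifted base string itself has rank $\geq\nu$. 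The paper avoids this by never counting factors: it uses the ideal property on \emph{both} sides of the recursion to equate $\rk(\tau)$ and $\rk(\tau_1\tau\tau_2)$ outright, after which a single use of Lemma \ref{rkarithmetic} finishes. To repair your version you would either need to invoke the factorizable-system theorem of \cite{MorsBetwnFinDimInfDimRep} as a black box rather than re-deriving it by the flawed growth estimate, or establish (as the paper implicitly does) that each recursion level produces a map whose rank is again at least $\rk(f)$, not merely at least one.

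Two smaller points. First, your observation that the ``distinct string modules'' hypothesis is what makes $\tau$ (and hence the outer factors) noninvertible is correct and matches the paper's one-line remark that $\rk(\tau)\geq 1$. Second, the compatibility issues you flag about transporting $\tau$ along $\tau_2$ via the identity $\la_i(\vv\xx)=\la_i(\vv)\xx$ are real, but they are already absorbed in the paper's formulation of Equation (terms); you do not need to re-derive them.
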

\begin{proof}
Since $\tau$ is recursive there exist $\la$-terms $\tau_1,\tau_2$ and an $\lb$-term $\mu$ such that
\begin{equation*}
\tau(1_{(v,j)})= \brac{\mu}{\tau_1\tau\tau_2}(1_{(v,j)}).
\end{equation*}
Clearly $f$ is non-invertible and hence $\rk(\tau)\geq 1$. Since $\rad{\nu}$ is an ideal for all ordinals $\nu$ we have $\rk(\tau_1\tau\tau_2) \geq \rk(\tau)$.

We also have 
\begin{equation}\label{terms}
   \brac {\mu}{\tau_1\tau\tau_2} = \brac {\mu}{\tau_1\tau\tau_2}\tau_1\tau\tau_2.
\end{equation}

If $\rk(\tau_1\tau\tau_2) < \infty$ then by Lemma \ref{rkarithmetic} we have $\rk(\tau) = \rk(\brac{\mu}{\tau_1\tau\tau_2}) > \rk (\tau_1\tau\tau_2)$, which is a contradiction. Thus $\rk(\tau_1\tau\tau_2) = \infty$. Combining the above two equations we also have $\rk(\tau) \geq \rk(\tau_1\tau\tau_2)$. Therefore $\rk(\tau) = \infty$.
\hfill\end{proof}

Similarly we can define when a term labeling canonical inclusions or canonical surjections of the remaining three types is recursive and prove results like the above for them.

\subsection{The case of meta-$\bigcup$-cyclic string algebras}\label{casemetaucyclic}
In this section we prove that graph maps satisfying certain hypotheses lie in the stable radical. The first result guarantees that the hypotheses are satisfied somewhere.

\begin{proposition}\label{existinfrank}
Let $\Lambda$ be a string algebra and $\bb$ be a vertex of a meta-band. Then there exists a substring $\uu$ of a cyclic permutation of $\bb$ that starts with an inverse syllable, and distinct syllables $\alpha,\beta$ such that the strings $\alpha\uu,\beta\uu$ are defined and are extendable.
\end{proposition}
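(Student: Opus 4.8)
The plan is to locate a vertex $\bb$ of a meta-band that is a prime band, trace the relevant bridge in the meta-band, and exhibit the required branch point inside a cyclic permutation of $\bb$. Since $\bb$ is a vertex of a meta-band, it participates in a directed cycle of positive length in the bridge quiver; in particular there is a non-trivial bridge $\bb\xrightarrow{\uu_0}\bb'$ lying on this cycle. By the definition of a bridge, the word $\bb'\uu_0\bb$ is a string, and the exit of this bridge is a syllable $\gamma$ at which $\prescript{\infty}{}{\bb'}\uu_0\bb$ and $\prescript{\infty}{}{\bb}$ first disagree reading from the right. The key observation is that at the position just before this disagreement we have two distinct continuations available: the one given by $\bb$ itself (since $\bb^2$ is a string, $\bb$ being a band), and the one given by $\uu_0\bb$ (the bridge direction). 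These two continuations must begin with distinct syllables $\alpha,\beta$, precisely because $\gamma$ is the exit, i.e. the first place of difference.

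The next step is to name the common string $\uu$ being extended: it is the maximal common right-substring of $\prescript{\infty}{}{\bb}$ and $\prescript{\infty}{}{\bb'}\uu_0\bb$, which is a finite string (it cannot be all of $\prescript{\infty}{}{\bb}$ since the bridge is non-trivial). Up to a cyclic permutation of $\bb$ we may arrange that $\uu$ is a substring of a cyclic permutation of $\bb$ and, by a further cyclic rotation if needed (using that $\bb$, being a band, can be rotated so that it begins with an inverse syllable as in the definition), that $\uu$ starts with an inverse syllable; here one must check that the rotation does not destroy the ``$\uu$ is a common suffix'' property, which it does not since $\prescript{\infty}{}{\bb}$ is already the bi-infinite periodic string and only finitely much data near the branch point matters. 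Then $\alpha\uu$ is a substring of $\prescript{\infty}{}{\bb}$, hence extendable (it sits inside a cyclic permutation of the band $\bb$, after possibly rotating), and $\beta\uu$ is a substring of $\prescript{\infty}{}{\bb'}\uu_0\bb$; to see $\beta\uu$ is extendable one uses that $\bb'$ is a band and that $\bb\xrightarrow{\uu_0}\bb'$ together with the rest of the meta-band gives, via Lemma \ref{generateallstrings} or a direct concatenation argument, a genuine band $\prescript{}{}{\bb'^q}\uu_0\bb^p$ containing $\beta\uu$ as a substring.

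I expect the main obstacle to be the careful bookkeeping around the exit syllable and the two simultaneous bookkeeping tasks: (i) showing that the two continuations really do differ in their \emph{first} syllable (not merely somewhere further along), and (ii) arranging, by cyclic permutation, that the common string $\uu$ both lies inside a cyclic permutation of $\bb$ and starts with an inverse syllable, without breaking the branching. Point (i) is essentially the definition of ``exit'' once one phrases it correctly: if the two continuations agreed in their first syllable one could push the point of disagreement further and the exit would be later, contradiction. Point (ii) is where one should be most careful, since the exit position need not be adjacent to the ``seam'' of the band $\bb$; the fix is to first choose the cyclic permutation of $\bb$ so that reading $\prescript{\infty}{}{\bb}$ the branch point occurs at the start of a period, then truncate. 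The roles of $\alpha,\beta$ may need to be swapped to make $\alpha$ the one coming from within $\bb$; both are then extendable as explained. Finally, the hypothesis that $\bb$ is a vertex of a meta-band (rather than an arbitrary prime band) is used exactly to guarantee the existence of the non-trivial bridge out of $\bb$ and a return path, which is what makes $\beta\uu$ extendable.
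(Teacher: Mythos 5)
Your approach mirrors the paper's: take a non-trivial bridge $\bb\xrightarrow{\uu_0}\bb'$ lying on the meta-band, use its exit syllable to exhibit the branching into two distinct syllables, and close the meta-band cycle to obtain genuine bands witnessing extendability of both $\alpha\uu$ and $\beta\uu$. There is, however, a real imprecision in your construction of $\uu$. You define $\uu$ as the \emph{maximal} common right-substring of $\prescript{\infty}{}{\bb}$ and $\prescript{\infty}{}{\bb'}\uu_0\bb$; but both of these left $\N$-strings end on the right with a copy of $\bb$, so this common suffix has length at least $|\bb|$. Consequently your $\uu$ is never a substring of a single cyclic permutation of $\bb$, and appealing to ``a further cyclic rotation'' cannot repair this, since rotation does not shorten the string. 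The correct move, which the paper compresses into ``it is easy to find'', is to \emph{truncate from the source (right) end}: deleting syllables there leaves $\alpha\uu$ and $\beta\uu$ defined with $\alpha\neq\beta$ (the branch point sits at the target end and is untouched), and since $\bb$ is mixed one can stop at an inverse syllable within one period, forcing $|\uu|<|\bb|$. This simultaneously repairs your claim that $\alpha\uu$ ``sits inside a cyclic permutation of $\bb$'', which fails whenever $|\alpha\uu|>|\bb|$.

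A smaller issue: the expression $\bb'^{q}\uu_0\bb^{p}$ you write is not a cyclic word, since it does not return from $\bb'$ to $\bb$, so it is not itself a band; one must traverse the whole meta-band cycle. The paper does this explicitly, forming the bands $\uu_n\bb_n\hdots\uu_2\bb_2\uu_1\bb^{p}$ for $p=1,2$, and then reads \emph{both} $\alpha\uu$ and $\beta\uu$ off cyclic permutations of these two bands. That is cleaner than your split treatment, where $\alpha\uu$ is handled via $\bb$ alone (with the length problem above) and $\beta\uu$ via a band that is never correctly written down.
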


\begin{proof}
Suppose $\bb$ is a vertex of the meta-band $\bb_1(=\bb)\xrightarrow{\uu_1}\bb_2\xrightarrow{\uu_2}\hdots\xrightarrow{\uu_{n-1}}\bb_n\xrightarrow{\uu_n}\bb_1$, where $n\geq1$. Let $\alpha$ be the exit syllable of $\uu_1$ and $\beta$ the syllable of $\bb$ such that $s(\alpha)=s(\beta)$. It is easy to find a substring $\uu$ of a cyclic permutation of $\bb$ starting with an inverse syllable such that $\alpha\uu$ and $\beta\uu$ are defined. Then $\uu_n\bb_n\hdots\uu_2\bb_2\uu_1\bb^p$, for $p=1,2$, are distinct bands which extend $\alpha\uu$ and $\beta\uu$ respectively.
\hfill\end{proof}

In the next result we explicitly construct a recursive system to identify some elements of the stable radical.
\begin{lemma}\label{extendinfty}
Let $\Lambda$ be a meta-$\bigcup$-cyclic string algebra. Let $\xx$ be a string such that its first syllable is inverse and $\xx1_{(v,j)}$ is defined. If there is a direct syllable $\alpha$ and an inverse syllable $\beta$ such that $\alpha\xx$ and $\beta\xx$ are defined and are extendable. Then the rank of the canonical inclusion $f:\M(1_{(v,j)})\to\M(\xx)$ is $\infty$.
\end{lemma}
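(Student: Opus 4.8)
The goal is to produce a recursive $\la$-term labeling the canonical inclusion $f\colon\M(1_{(v,j)})\to\M(\xx)$ and then invoke Lemma \ref{recursiveinfty}. First I would note that since $\xx$ begins with an inverse syllable and $\xx1_{(v,j)}$ is defined, Remark \ref{prodli} lets me write $f=\La_{k_p}\cdots\La_{k_1}(v,j)$, so $f$ is labeled by some $\la$-term $\tau:=\tau(v,j)$; equivalently $\xx=\tau(1_{(v,j)})$ in the shorthand of that remark. The substance of the argument is to find $\la$-terms $\tau_1,\tau_2$ and an $\lb$-term $\mu$ with $\tau(1_{(v,j)})=\brac{\mu}{\tau_1\tau\tau_2}(1_{(v,j)})$, i.e., to exhibit $\xx$ as the string obtained by the ``$\la$--then--$\lb$'' zig-zag $\brac{\mu}{\tau_1\tau\tau_2}$ applied to $1_{(v,j)}$.

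To build the zig-zag I would use the two extensions supplied by the hypothesis. Since $\alpha\xx$ is extendable, it is a substring of a cyclic permutation of a band, and — pushing as far left as possible along the hammock poset $H_l(v)$ — there is an $\la$-term $\tau_2$ (a composition of maps $\la_i$) so that $\tau_2(\xx)$ contains $\alpha\xx$ and then continues into $\prescript{\infty}{}{\bb}$ for a prime band $\bb$; here Proposition \ref{evenprim} guarantees the periodic tail is an honest prime band, and meta-$\bigcup$-cyclicity guarantees $\bb$ actually has an (inverse) exit so that the leftward process genuinely ``wraps around.'' Applying $\tau\tau_2$ then reads off $\tau_2(\xx)$ and re-creates a copy of $\xx$ starting past that data — this is exactly where the recursive self-reference $\tau$ appears in the middle. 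A further $\la$-term $\tau_1$ extends this to where the string $\beta\xx$ (the other extension) sits. Now I would use the inverse syllable $\beta$: because $\beta\xx$ is extendable, reading \emph{downward} in the hammock poset from $\tau_1\tau\tau_2(1_{(v,j)})$ — which is what applying an $\lb$-term $\mu$ does — I can strip the extra material back off and land precisely on $\xx=\tau(1_{(v,j)})$. The key point that makes $\mu$ and the $\tau_i$ compatible is that $\alpha$ is direct while $\beta$ is inverse, so that the $\la$-extension of $\beta\xx$ and the $\lb$-extension of $\alpha\xx$ diverge in opposite directions, exactly as in the divergence analysis in the proofs of Proposition \ref{evenprim} and Theorem \ref{norealterm}; this is what lets the composite $\brac{\mu}{\tau_1\tau\tau_2}$ return to $\xx$ rather than overshooting.

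Having produced the recursive system $(\tau,\tau_1,\tau_2,\mu)$, the conclusion is immediate: Lemma \ref{recursiveinfty} gives $\rk(f)=\rk(\tau)=\infty$, provided $f$ is a map between \emph{distinct} string modules — and it is, since $\M(1_{(v,j)})$ is one-dimensional while $\M(\xx)$ is not (as $\xx$ has positive length).

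**Main obstacle.** The delicate step is the explicit construction of the four terms, and in particular verifying that applying $\mu$ to $\tau_1\tau\tau_2(1_{(v,j)})$ returns to $\xx$ on the nose. This requires a careful bookkeeping argument showing that the ``overshoot'' introduced by $\tau_1$ (passing to where $\beta\xx$ lives and into a prime band) is exactly undone by the downward hammock moves of $\mu$, using the uniqueness of extensions provided by Remark \ref{ltermstring}/\ref{prodli} and the dichotomy direct-vs-inverse for $\alpha,\beta$. One must also confirm that $\tau$ is literally the same term on both sides — i.e., that the self-similar copy of $\xx$ produced in the middle of $\tau_1\tau\tau_2$ is attached at the same vertex $(v,j)$ with the same orientation, so that the recursion equation is well-typed. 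Everything else (finiteness of the relevant hammock intervals, primality of the periodic tails, existence of exits) is handed to us by the earlier results.
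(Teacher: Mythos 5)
Your high-level plan matches the paper's: build a recursive system $(\tau,\tau_1,\tau_2,\mu)$ and invoke Lemma \ref{recursiveinfty}, using both extensions $\alpha\xx$ and $\beta\xx$ together with meta-$\bigcup$-cyclicity. However, the proof has a genuine gap at precisely the step you flag as the ``main obstacle'': you describe the shape of the construction but never actually produce the four terms, and the sketch you do give is off the mark in a way that would not survive an attempt to fill it in.

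The paper's construction is quite specific, and the piece you are missing is a \emph{third} band. Let $\bb$ and $\bb'$ be the cyclic permutations of bands extending $\alpha\xx$ and $\beta\xx$ respectively, both ending at $1_{(v,j)}$; since $\alpha$ is direct and $\beta$ is inverse these diverge, so $\bb\neq\bb'$, and while $\bb\bb'$ is always a string, $\bb'\bb$ may not be. Meta-$\bigcup$-cyclicity is used not merely to guarantee an exit from $\bb$ but to produce a directed path in the bridge quiver from $\bb$ to $\bb'$, generating a string $\vv$ so that $\bb'':=\bb'\vv\bb$ is again a cyclic permutation of a band, with both $\bb''\bb'$ and $\bb'\bb''$ defined. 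Only then can one set $\tau_1(1_{(v,j)})=\bb''$, $\tau_2(\xx)=\bb'$, and $\mu(\xx)=\xx\bb'\bb''$, and check that $\brac1\mu(\tau(1_{(v,j)}))=\brac1{\tau_2\tau\tau_1}(1_{(v,j)})$ with $\xx=\tau(1_{(v,j)})$ the fundamental solution (the length comparison ``$p>u$'' in Remark \ref{prodli}). Your sketch omits $\bb''$ entirely. It also misidentifies $\tau_2$: you say ``$\tau_2(\xx)$ contains $\alpha\xx$ and then continues into $\prescript{\infty}{}{\bb}$,'' but $\tau_2(\xx)$ must be a \emph{finite} string, and in the paper's construction $\tau_2(\xx)=\bb'$ is the band extending the \emph{inverse} syllable $\beta$, not the direct syllable $\alpha$. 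Without the concrete choice of $\bb''$ and the resulting explicit terms, the ``careful bookkeeping'' you defer to is the entire content of the proof, so as written this is a plan rather than a proof of the lemma.
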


\begin{proof}
Suppose $\tau:=\tau(v,j)$ is a complex $\la$-term labeling $f$.

Let $\bb,\bb'$ be the cyclic permutations of bands extending $\alpha\xx,\beta\xx$ respectively such that $\bb1_{(v,j)}$ and $\bb'1_{(v,j)}$ are defined. Clearly $\bb\neq\bb'$. The composition $\bb\bb'$ is always defined while $\bb'\bb$ may not be a string. Since $\Lambda$ is meta-$\bigcup$-cyclic, there exists a string $\vv$ generated by a path from $\bb$ to $\bb'$ such that $\bb''=\bb'\vv\bb$ is defined and is a cyclic permutation of a band. Then $\bb''\bb'$ and $\bb'\bb''$ are strings.

Using Remark \ref{ltermstring} we can find complex $\la$-terms $\tau_1,\tau_2$ and a complex $\lb$-term $\mu$ such that $$\tau_1(1_{(v,j)})=\bb'',\ \tau_2(\xx) = \bb',\ \mu(\xx) = \xx\bb'\bb''.$$
Hence we get
\begin{equation*}
    \brac1\mu(\tau(1_{(v,j)}))=\brac1{\tau_2\tau\tau_1}(1_{(v,j)})
\end{equation*}
If $\mu=\lb_{n_p}\hdots\lb_{n_1}$ and $\xx=\lb_{m_u}\hdots\lb_{m_1}(\yy)$ for some substring $\yy$ of $\xx$ obtained using Remark \ref{prodli}, then $p>u$. Hence $\xx=\tau(1_{(v,j)})$ is the fundamental solution of the above equation to yield the following recursive system.
\begin{equation*}
    \tau(1_{(v,j)})=\brac\mu{\tau_2\tau\tau_1}(1_{(v,j)})
\end{equation*}
Thus $\tau$ is a recursive $\la$-term and hence by Lemma \ref{recursiveinfty}, we conclude $\rk(f)=\infty$.
\hfill\end{proof}

The hypotheses of the above lemma are quite restrictive and are relaxed below.

\begin{theorem}\label{infintervalinfrk}
Let $\Lambda$ be a meta-$\bigcup$-cyclic string algebra. Let $k>0$ and $v \in Q_0$ be such that $\xx:=\la_k(1_{(v,1)})$ exists. Suppose $f : \M(1_{(v, 1)}) \to \M(\la_k(1_{(v,1)}))$ denotes the canonical inclusion and $I:=[1_{(v, 1)}, \xx]$ denotes the closed interval in $H_l(v)$. If $I$ has infinitely many elements then $\rk(f)=\infty$.
\end{theorem}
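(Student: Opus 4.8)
The plan is to reduce the general statement to the hypotheses of Lemma~\ref{extendinfty} by "walking along the interval" $I$ and locating, inside it, a string $\xx'$ of the kind required there — one starting with an inverse syllable, extendable to a band on either side with different first syllables. Since $\xx=\la_k(1_{(v,1)})$ is obtained from $1_{(v,1)}$ by repeatedly applying $\la$, the interval $I=[1_{(v,1)},\xx]$ records precisely the branchings of the hammock poset $H_l(v)$ that occur while building $\xx$. The key observation is that $I$ being infinite forces at least one of these branchings to sit at a vertex of a meta-band: if every band-free detour encountered while traversing $I$ led only into bands lying outside any meta-band (i.e. in "terminal" parts of the bridge quiver), then, by finiteness of the bridge quiver (Theorem~\ref{finprimeband}, Proposition~\ref{finbandfree}) together with the fact that $\Lambda$ is meta-$\bigcup$-cyclic, the portion of $H_l(v)$ between $1_{(v,1)}$ and $\xx$ would be finite, contradicting $|I|=\infty$.

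The steps, in order, are as follows. First I would use Lemma~\ref{generateallstrings} to express $\xx$, and each string in the interval $I$, in terms of a path in the extended bridge quiver. Second, I would argue that since $I$ is infinite, at least one of the prime bands $\bb$ appearing on such a generating path must be a vertex of a meta-band (this is the "pigeonhole on the finite bridge quiver" step, combined with the meta-$\bigcup$-cyclic hypothesis: in a meta-$\bigcup$-cyclic algebra every vertex of the bridge quiver lies on a meta-band, so in fact \emph{every} prime band appearing is fine — what one really needs is just that some band appears, which follows because a band-free interval between two fixed strings is finite by Proposition~\ref{finbandfree}). Third, applying Proposition~\ref{existinfrank} to this $\bb$ yields a substring $\uu$ of a cyclic permutation of $\bb$, starting with an inverse syllable, and distinct syllables $\alpha,\beta$ with $\alpha\uu,\beta\uu$ defined and extendable. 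Fourth — and this is where one must be careful — I would need to relate this $\uu$ back to the original interval: one wants $\uu1_{(v,j)}$ to be a \emph{substring-from-the-right} appearing inside $\xx$, so that the canonical inclusion $\M(1_{(v,j)})\to\M(\uu)$ is a "sub-inclusion" of $f$, and then conclude $\rk(f)=\infty$ from $\rk$ of this sub-inclusion being $\infty$ via Lemma~\ref{extendinfty} together with the ideal property of $\rad{\nu}$ and Lemma~\ref{rkarithmetic} (factoring $f$ through $\M(\uu)$).

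The main obstacle I anticipate is precisely the fourth step: matching the string $\uu$ produced abstractly by Proposition~\ref{existinfrank} with an \emph{actual initial segment} of the word $\xx$ read from the right, so that $f$ genuinely factors as $\M(1_{(v,1)})\to\M(\uu 1_{(v,1)})\xrightarrow{} \M(\xx)$ with the first map of infinite rank. It may be that $\uu$ as produced does not literally occur this way, and one has to instead run the argument of Lemma~\ref{extendinfty} afresh with $\xx$ itself — i.e. one must show directly that the branching witnessed inside the infinite interval $I$ provides a direct syllable $\alpha$ and an inverse syllable $\beta$ with $\alpha\xx,\beta\xx$ (or $\alpha\xx',\beta\xx'$ for a relevant prefix $\xx'$ of $\xx$) extendable. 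Unwinding which side of each branching is "direct" versus "inverse" — governed by the $\sigma,\varepsilon$ conventions and the structure of the $\la_k$ construction — is the delicate bookkeeping, but once it is in place, Lemma~\ref{extendinfty} (hence Lemma~\ref{recursiveinfty}) closes the argument.
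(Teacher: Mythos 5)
Your steps (1)--(3) are broadly the paper's steps. The paper first observes that each interior element of $I$ has the form $\yy a\xx$ with $a \in Q_1$, so by Proposition \ref{finbandfree} the interval $I$ is infinite precisely when some $\bb'\zz a\xx$ lies in $I$ for a prime band $\bb'$. It then takes $\uu$ from Proposition \ref{existinfrank}. So far so good.

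The gap is exactly where you suspected, at step (4), and your proposed resolutions do not close it. You want $\uu$ to appear as a right-hand initial segment of $\xx$ (so that $\M(1_{(v,1)})\to\M(\uu1_{(v,1)})\to\M(\xx)$ is an honest factorization of $f$), or, failing that, you want $\alpha\xx,\beta\xx$ themselves to be extendable. Neither of these is under your control: $\uu$ is produced abstractly from a meta-band through $\bb$ and has no reason to align with the tail of $\xx$, and $\xx = \la_k(1_{(v,1)})$ need not itself branch into two bands. What the paper does instead is to \emph{extend $\xx$ outward on the left}. Starting from $\bb'\zz a\xx \in I$, choose (using meta-$\bigcup$-cyclicity) a string $\vv$ generated by a path from $\bb'$ to $\bb$ in the bridge quiver, and write $\bb = \uu''\uu'$ so that $\uu\uu'$ is a string; then $\uu\uu'\bb\vv\bb'\zz a\xx$ is a string. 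The canonical inclusion
\[
g : \M(\uu'\bb\vv\bb'\zz a\xx) \longrightarrow \M(\uu\uu'\bb\vv\bb'\zz a\xx)
\]
is in the stable radical because the relevant hammock interval $[\uu'\bb\vv\bb'\zz a\xx,\ \uu\uu'\bb\vv\bb'\zz a\xx]$ is isomorphic to $[1_{(v',i)},\uu]$, and the latter inclusion has rank $\infty$ by Lemma \ref{extendinfty}. Finally, $1_{(v,1)}$ is an image substring and $\xx$ a factor substring of $\uu\uu'\bb\vv\bb'\zz a\xx$ (the syllable to the left of $\xx$ is the direct arrow $a$), so $f$ factors through $g$ as a composition of canonical maps, and the ideal property gives $\rk(f)=\infty$. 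In short: rather than locate the branching inside $\xx$, embed $\xx$ into a longer string where the branching is manufactured by the bridge quiver, and push the conclusion back through the ideal structure. That is the missing ingredient in your proposal.
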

\begin{proof}
Every element in the interior of $I$ is of the form $\yy a \xx$ where $\yy \in \STR \Lambda$ and $a \in Q_1$. The interval $I$ has infinitely many elements if and only if there exists a prime band $\bb'$ and a string $\zz$ such that $\bb'\zz a \xx$ is a string in $I$, thanks to Proposition \ref{finbandfree}.

Let $\uu$ be the string obtained from Proposition \ref{existinfrank} that is a substring of a cyclic permutation of $\bb$. Let $\bb=\uu''\uu'$ be such that $\uu\uu'$ is defined. Further suppose $\uu1_{(v',i)}$ is defined.
Let $\vv$ be a string generated by a path from $\bb'$ to $\bb$ in the bridge quiver such that $\uu\uu'\bb\vv\bb'\zz a \xx$ is a string. The canonical inclusion map $g : \M(\uu'\bb\vv\bb'\zz a \xx)  \to \M(\uu\uu'\bb\vv\bb'\zz a \xx)$ is in the stable radical by Lemma \ref{extendinfty} since the intervals $[1_{(v',i)},\uu]$ and $[\uu'\bb\vv\bb'\zz a \xx,\uu\uu'\bb\vv\bb'\zz a \xx]$ in appropriate hammocks are isomorphic. Since $f$ lies in the ideal generated by $g$ we also have that $\rk(f) = \infty$.
\hfill\end{proof}

The above two results could be combined to yield an alternate proof of \cite[Theorem~3(iii)]{SchroerInfRadMod}.

Clearly if the interval $I$ has only finitely many elements then $\rk(f)<\omega$. Now we are ready to prove the main result for meta-$\bigcup$-cyclic string algebras.
\begin{theorem}\label{PNDmain}
The rank of a graph map between finite dimensional string modules for a meta-$\bigcup$-cyclic string algebra $\Lambda$ is either finite or equals $\infty$.
\end{theorem}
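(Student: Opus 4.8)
The plan is to reduce the rank computation for an arbitrary graph map to the two already-settled cases contained in Theorem~\ref{infintervalinfrk} and the remark immediately preceding Theorem~\ref{PNDmain}. Recall that by Remark~\ref{sbbbrank} it suffices to treat the basis elements of the $\mathrm{Hom}$-sets, i.e.\ the graph maps themselves; and since we are dealing with string modules only, every such graph map $f:\M(\ww)\to\M(\uu)$ factors canonically as an epimorphism $\M(\ww)\twoheadrightarrow\M(\vv)$ followed by a monomorphism $\M(\vv)\hookrightarrow\M(\uu)$, where $\vv$ is the string associated to $f$ (here $\vv$ is a factor substring of $\ww$ and an image substring of $\uu$). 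So the first step is to dualize: rank is self-dual in the sense that a statement proved for canonical inclusions transfers to canonical surjections, so it is enough to analyze the two canonical monomorphisms/epimorphisms of which $f$ is the composite, and by Lemma~\ref{rkarithmetic}-type arithmetic it is enough to show each of these two pieces has rank that is finite or $\infty$.

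So the core reduces to: given a canonical inclusion $g:\M(\vv)\hookrightarrow\M(\uu)$ where $\vv$ is an image substring of $\uu$, show $\rk(g)$ is finite or $\infty$. The next step is to express $g$ via the hammock-poset language of \S\ref{Hammock} and \S\ref{recsystem}. Writing $\uu=\uu_L\,\vv\,\uu_R$ with $\uu_L$ extending $\vv$ to the left and $\uu_R$ to the right (the image-substring condition controls the shape of the first syllables of $\uu_L$ and the last of $\uu_R$), I would factor $g$ as a composite of two canonical inclusions: one that only grows $\vv$ on the left, one that only grows it on the right. By the left/right symmetry it suffices to handle a purely left-growing inclusion $\M(\vv)\hookrightarrow\M(\uu_L\vv)$. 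Using Remark~\ref{prodli} I would write $\uu_L\vv$ as $\la_{k_p}\cdots\la_{k_1}$ applied to (the appropriate lazy path attached to) $\vv$, and thus decompose the inclusion as a finite composite $\La_{k_p}\cdots\La_{k_1}$ of one-step canonical inclusions of the form $\M(\vv')\hookrightarrow\M(\la_{k}(\vv'))$. Again by Lemma~\ref{rkarithmetic} it is enough to bound the rank of each one-step factor, so the whole problem collapses to the single atom: a canonical inclusion $h:\M(\vv')\hookrightarrow\M(\la_k(\vv'))$.

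For this atom I would invoke the dichotomy from Theorem~\ref{infintervalinfrk} together with the sentence preceding Theorem~\ref{PNDmain}: the closed interval $I:=[\vv',\la_k(\vv')]$ in the relevant hammock $H_l(t(\vv'))$ is either finite, in which case $\rk(h)<\omega$ (the preceding remark), or infinite, in which case $\rk(h)=\infty$ (Theorem~\ref{infintervalinfrk}). Strictly speaking Theorem~\ref{infintervalinfrk} is stated for the base point $1_{(v,1)}$, but the canonical inclusion $\M(\vv')\hookrightarrow\M(\la_k(\vv'))$ depends only on the order-isomorphism type of the interval $[\vv',\la_k(\vv')]$ (this is the `invariance under change of base string' principle underlying the term formalism of \cite{SK}, already used in the proof of Theorem~\ref{infintervalinfrk} via the phrase ``the intervals $\dots$ in appropriate hammocks are isomorphic''), so the same conclusion applies with an arbitrary base string. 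Feeding the finite-or-$\infty$ dichotomy for each atom back up the chain---finitely many atoms per side, two sides, then the epi--mono composite---and using that a finite composite of finite-rank maps has finite rank while any composite involving an $\infty$-rank factor has rank $\infty$, we conclude $\rk(f)$ is finite or $\infty$.

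The main obstacle I anticipate is the bookkeeping in the reduction to atoms, specifically making sure the decomposition $g=\La_{k_p}\cdots\La_{k_1}$ genuinely realizes $g$ as a composite of \emph{canonical} inclusions and that each intermediate string is legitimate (one must check the $\la_k$-operations compose correctly and that the middle string $\vv$ is not disturbed, using the remark that $\la_i(\vv\xx)=\la_i(\vv)\xx$). A secondary subtlety is the degenerate cases: $\vv'$ a lazy path, $\uu_L$ or $\uu_R$ empty, or the graph map $f$ being an isomorphism (rank $0$, vacuously fine). None of these is deep, but they need to be handled so that the appeal to Theorem~\ref{infintervalinfrk} and to Lemma~\ref{rkarithmetic} is clean; the genuinely new content is already contained in Theorems~\ref{infintervalinfrk} and \ref{extendinfty}, and this theorem is the packaging of that content into a statement about arbitrary graph maps.
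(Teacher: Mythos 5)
Your proposal follows the same route as the paper: reduce to the decomposition of Remark~\ref{prodli} into one-step canonical inclusions/surjections labelled by $\la_i$, $\ra_i$, $\lb_i$, $\rb_i$, apply Theorem~\ref{infintervalinfrk} together with the sentence preceding Theorem~\ref{PNDmain} to conclude that each such one-step piece is either of finite rank or lies in the stable radical, and then use that the stable radical is an ideal. The paper compresses this into three sentences; your version makes explicit the intermediate epi--mono factorization through $\M(\vv)$, the left/right split, and the point that Theorem~\ref{infintervalinfrk} as stated is anchored at $1_{(v,1)}$ while the atoms $\M(\vv')\hookrightarrow\M(\la_k(\vv'))$ require the usual change-of-base-string argument via interval isomorphism, which the paper itself invokes inside the proof of Theorem~\ref{infintervalinfrk}. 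One small quibble: the appeal to ``Lemma~\ref{rkarithmetic}-type arithmetic'' early on is misleading, since that lemma only gives lower bounds; the actual closing argument (which you state correctly later) is that an $\infty$-rank factor forces $\infty$-rank because the stable radical is an ideal, while if every one-step interval is finite, the full interval $[\vv,\uu]$ (resp.\ $[\vv,\ww]$ for the epi part) is finite and the remark preceding Theorem~\ref{PNDmain} applies directly -- this is the cleanest way to justify the ``finite composite of finite-rank pieces is of finite rank'' step, which is not a general fact about $\rad{}$ but holds here because of the interval characterization.
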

\begin{proof}
By Theorem \ref{infintervalinfrk} a map labeled by $l_k$ for $k>0$ is either in the stable radical or is of finite rank. Moreover Proposition \ref{existinfrank} together with Lemma \ref{extendinfty} guarantees the existence of at least one graph map in the stable radical.

Using Remark \ref{prodli} we know that a graph map $f$ between finite dimensional string modules of $\Lambda$ is a finite composition of maps labeled by $\la_i$, $\ra_i$, $\lb_i$ and $\rb_i$. If any of these maps lies in the stable radical then so does $f$ since the stable radical is an ideal. Otherwise $f$ is of finite rank being a finite composition of finite rank maps.
\hfill\end{proof}

\subsection{The case of meta-torsion-free string algebras}\label{metatorsionfree}
The proof of Theorem \ref{infintervalinfrk} motivates the following definition.
\begin{definition}
Say that $\Lambda$ is \emph{meta-torsion-free} if each connected component of its bridge quiver contains at least two vertices and every directed path in its bridge quiver with positive finite length can be extended to a directed path indexed by $\Z$.
\end{definition}

This class of non-domestic string algebras can be characterized in terms of the rank of graph maps between its finite dimensional string modules.
\begin{theorem}\label{EPNDmain}
The following are equivalent for a non-domestic string algebra $\Lambda$.
\begin{enumerate}
    \item The algebra $\Lambda$ is meta-torsion-free.
    \item The rank of any graph map between finite dimensional string modules for $\Lambda$ is either finite or $\infty$.
\end{enumerate}
\end{theorem}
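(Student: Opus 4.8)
The plan is to prove the two implications separately, with the bulk of the work going into $(1)\Rightarrow(2)$, which amounts to re-running the analysis of \S\ref{casemetaucyclic} with "meta-$\bigcup$-cyclic" weakened to "meta-torsion-free". The direction $(2)\Rightarrow(1)$ I would prove by contrapositive: assuming some connected component of the bridge quiver is not meta-torsion-free, I exhibit a graph map of intermediate rank (finite-but-unbounded behaviour captured by a real term that is not a finite composition of powers of $\la,\lb,\ra,\rb$). If a directed path $\bb_1\xrightarrow{\uu_1}\cdots\xrightarrow{\uu_{m}}\bb_{m+1}$ of positive length fails to extend to a $\Z$-indexed path, then by torsion-freeness of the hammock posets the obstruction is a "dead end": a genuine real term $\mu$ (in the sense of \cite{SK}) that labels the canonical inclusion $\M(1_{(v,j)})\to\M(\xx)$ for the string $\xx$ read off this maximal path, and this term is not equivalent to any finite composition of finite powers of $\la_i,\lb_i,\ra_i,\rb_i$. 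Such a map has rank strictly between $\omega$ and $\infty$ (it lies in $\rad{\omega}$ since it factors through band modules along the path, but the failure to close up into a cycle prevents a recursive system, so it is not in $\rad{\infty}$); I would extract the precise value from the length of the maximal path as in Schr\"oer's computation, giving a map whose rank is neither finite nor $\infty$, contradicting $(2)$.

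For $(1)\Rightarrow(2)$ the strategy mirrors Theorem \ref{PNDmain}. By Remark \ref{prodli} every graph map between finite dimensional string modules is a finite composition of maps labelled by $\la_i,\ra_i,\lb_i,\rb_i$, and since the stable radical is an ideal it suffices to show each such elementary map is either of finite rank or in $\rad{\infty}$. So fix $k>0$, $v\in Q_0$ with $\xx:=\la_k(1_{(v,1)})$ defined, and let $I:=[1_{(v,1)},\xx]$ in $H_l(v)$. If $I$ is finite the rank is $<\omega$, exactly as before. If $I$ is infinite, then by Proposition \ref{finbandfree} there is a prime band $\bb'$ and a string $\zz$ with $\bb'\zz a\xx\in I$. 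Here is the point where meta-$\bigcup$-cyclicity was used and must now be replaced: in Theorem \ref{infintervalinfrk} we invoked strong connectedness to find a string $\vv$ generated by a path from $\bb'$ back to some band $\bb$ with the required extension property. In the meta-torsion-free setting I would instead argue that the arrow out of $\bb'$ witnessed by the string $\bb'\zz a\xx$ lies on a directed path of positive finite length, which by hypothesis extends to a $\Z$-indexed path; walking far enough along this extension produces a band $\bb$ and substring $\uu$ as in Proposition \ref{existinfrank} together with distinct syllables making $\alpha\uu,\beta\uu$ extendable, and the construction of Lemma \ref{extendinfty} then goes through verbatim to build a recursive system, giving $\rk(f)=\infty$ via Lemma \ref{recursiveinfty}.

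The main obstacle is exactly this localisation step: in the meta-$\bigcup$-cyclic case every arrow lies on a directed \emph{cycle}, which is what makes Lemma \ref{extendinfty}'s recursive equation close up; under meta-torsion-freeness an arrow need only lie on a bi-infinite path, and one must check that a bi-infinite (rather than periodic) path still supplies the two extendable companions $\alpha\uu,\beta\uu$ needed to run Proposition \ref{existinfrank}, and that the resulting term is genuinely recursive rather than merely a real term of large-but-finite rank. I expect this to require a careful argument that along a $\Z$-indexed path in a \emph{finite} bridge quiver some prime band is necessarily repeated (pigeonhole), so that a bi-infinite path always contains a periodic tail; this repeated band is the $\bb$ on which the recursion is set up, and its two distinct outgoing behaviours (one along the path, one along the band itself) furnish $\alpha$ and $\beta$. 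Granting this, both Theorem \ref{infintervalinfrk} and Lemma \ref{extendinfty} adapt, and the dual statements for $\ra,\rb$ follow by the symmetry already noted after Theorem \ref{norealterm}, completing $(1)\Rightarrow(2)$. \hfill$\Box$
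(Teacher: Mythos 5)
Your proposal for $(1)\Rightarrow(2)$ matches the paper's approach: the paper says only that ``after making suitable modifications to the proofs of Proposition~\ref{existinfrank}, Lemma~\ref{extendinfty} and Theorem~\ref{infintervalinfrk} the proof remains essentially the same as that of Theorem~\ref{PNDmain},'' and your observation that a bi-infinite directed path in a finite bridge quiver must, by pigeonhole, repeat a prime band (hence contains a periodic tail that supplies the two extendable companions $\alpha\uu,\beta\uu$) is exactly the right substitute for the strong-connectedness argument. This is a helpful elaboration of what the paper leaves implicit.

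Your proposal for $(2)\Rightarrow(1)$ is where there is a genuine gap. First, the map you should single out is much smaller than ``the canonical inclusion for the string $\xx$ read off a maximal path.'' The paper instead picks a prime band $\bb'$ with no non-trivial outgoing bridge (which exists when some path cannot be $\Z$-extended), then an inverse syllable $\alpha$ of $\bb'$ with $\la_{k-1}(1_{(v,i)})$ a substring of $\bb'$, and considers the single elementary map $f:\M(1_{(v,i)})\to\M(\alpha)$ labelled by $\la_k$. The reason for this economy is that the extended interval $[1_{(v,1)},\alpha]$ in $\hat H_l(v)$ then contains exactly one left $\N$-string $\ww$, which is the pivot for the entire argument. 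Second, your claim that ``the failure to close up into a cycle prevents a recursive system, so it is not in $\rad\infty$'' is not a proof: recursiveness is only a sufficient condition for membership in the stable radical, and its failure proves nothing about the upper bound on rank. The paper's actual argument is a factorization analysis: (a) any factorization of $f$ through a string module $\M(\vv)$ picks a finite $\vv\in(1_{(v,1)},\alpha)$, and since $\ww$ is the unique infinite point of the interval, whichever side of $\vv$ does not contain $\ww$ is a finite interval, forcing one of the two factors to have finite rank; (b) the hypotheses on $\alpha$ and $\bb'$ rule out any factorization through a band module. Together these give $f\in\rad\omega\setminus\rad{\omega+1}$, so $\rk(f)=\omega$. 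Third, your statement that $f$ ``lies in $\rad\omega$ since it factors through band modules along the path'' is the opposite of what happens: the paper shows $f$ does \emph{not} factor through any band module; $f\in\rad\omega$ simply because the interval $[1_{(v,1)},\alpha]$ is infinite, so $f$ factors through arbitrarily long compositions of irreducible maps. Finally, you invoke ``torsion-freeness of the hammock posets,'' but the theorem does not assume $\Lambda$ is torsion-free (that is a distinct hypothesis from meta-torsion-free), so this cannot be used.
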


\begin{proof}
(1)$\implies$(2): This is the analogue of Theorem \ref{PNDmain} for meta-torsion-free string algebras. After making suitable modifications to the proofs of Proposition \ref{existinfrank}, Lemma \ref{extendinfty} and Theorem \ref{infintervalinfrk} the proof remains essentially the same as that of Theorem \ref{PNDmain}.

(1)$\impliedby$(2) We prove the contrapositive of the statement. Suppose $\Lambda$ is not meta-torsion-free. Without loss of generality we may assume that there exists a prime band $\bb$ such that any directed path starting with $\bb$ is finite. Thus there is a prime band $\bb'$ such that there is no non-trivial bridge from $\bb'$.

Let $\alpha$ be an inverse syllable of $\bb'$ such that the canonical inclusion map $f:\M(1_{(v,i)})\to\M(\alpha)$, where $v=s(\alpha),i=-\sigma(\alpha)$, is labeled by $\la_k$ for some $k>0$ and $\la_{k-1}(1_{(v,i)})$ is a substring of a cyclic permutation of $\bb'$. Up to inversion of all values of $\sigma,\epsilon$ maps for $\Lambda$, we may assume that $i=1$. 

If we extend the definition of $<_l$ to include left $\N$-strings we obtain an extension of $H_l(v)$, say $\hat H_l(v)$. Consider the interval $I:=[1_{(v,1)},\alpha]$ in $\hat H_l(v)$. From the choice of $\bb'$ and $\alpha$ it can be easily seen that $I$ contains only one infinite string, say $\ww$.

Any factorization of the graph map $f$ through a string module corresponds to a choice of a finite string $\vv$ in the open interval $(1_{(v,1)},\alpha)$. Depending on whether $\ww$ lies in the interval $[1_{(v,1)},\vv]$ or $[\vv,\alpha]$, we have that the other one has only finitely many elements. 

If the graph map $f$ factors through a band module $\mathrm{B}(\bb,n,\lambda)$, then $\bb$ is necessarily $\bb'$. We also have that $1_{(v,1)}$ is an image substring of $^\infty\bb^\infty$. Dually either $1_{(v,1)}$ or $\alpha$ is a factor substring of $^\infty\bb^\infty$. Clearly this is impossible, and hence no factorisation of $f$ through a band module is possible.

This shows that $f\in\rad\omega\setminus\rad{\omega+1}$. Thus we have obtained a map, namely $f$, that is neither of finite rank nor in the stable radical.
\hfill\end{proof}

As a consequence we can conclude that any map between two string modules which does not lie in the same connected component of the AR quiver is of rank $\infty$. We further analyse graph maps whose source and/or target is a band module to complete our study of the stable radical in case of meta-torsion-free algebras.
\begin{corollary}\label{mtfrk}
For a meta-torsion-free string algebra $\Lambda$ we have  $\omega\leq\st(\Lambda)\leq\omega+2$.
\end{corollary}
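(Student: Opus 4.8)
The plan is to establish the two inequalities $\st(\Lambda)\geq\omega$ and $\st(\Lambda)\leq\omega+2$ separately, and for the lower bound to invoke results already available. Since a meta-torsion-free string algebra is non-domestic, Schröer's theorem \cite{SchroerInfRadMod} (recalled in \S\ref{rad}) gives $\rad{\st(\Lambda)}\neq 0$, so $\st(\Lambda)$ cannot be finite; combined with the existence of graph maps of rank $\infty$ coming from Theorem \ref{EPNDmain} together with the fact that $\rad n\neq\rad{n+1}$ for every finite $n$ (the AR quiver is infinite and has arrows of every finite rank, or more directly the compositions in Lemma \ref{rkarithmetic} produce maps of arbitrarily large finite rank), we get $\st(\Lambda)\geq\omega$. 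So the substance of the corollary is the upper bound $\st(\Lambda)\leq\omega+2$, i.e. showing $\rad{\omega+2}=\rad{\omega+3}$, equivalently $(\rad\omega)^3=(\rad\omega)^4$.

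For the upper bound I would argue that it suffices to show every graph map lying in $\rad{\omega+2}$ already lies in $\rad\infty$ (the stable radical), since then $\rad{\omega+2}=\rad{\st(\Lambda)}=\rad{\omega+2+1}$. By Theorem \ref{EPNDmain}(2) every graph map between \emph{string} modules has rank either finite or $\infty$, so no graph map of type SS contributes a value in $\{\omega,\omega+1,\omega+2\}$: such a map is either killed before $\rad\omega$ or is in $\rad\infty$. The remaining work is to control graph maps whose source or target is a band module, using the analysis already recorded in Remark \ref{sbbbrank}: maps of type SB and BS lie in $\rad\omega$, a composite SB followed by BS lies in $\rad{\omega+1}$, and maps of type BB lie in $\rad{\omega+1}$ (or have finite rank). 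Concatenating at most the three ``basic'' ways a graph map can pass through band modules — string to band, band to band, band to string — one lands in $(\rad\omega)^3=\rad{\omega+2}$, and any strictly longer composition factors through one of these pieces which is already in $\rad\infty$ by the Lemma \ref{extendinfty}/Theorem \ref{infintervalinfrk} machinery (a map that ``enters and leaves'' a band module around a meta-band loops forever and is therefore in the stable radical). So I would show: any graph map $f$ with $\rk(f)\notin\{0,1,2,\ldots\}$ either is of type SS and then has rank $\infty$ by Theorem \ref{EPNDmain}, or it factors through a band module and then, by examining the at most three stages SB/BB/BS and invoking that any genuine ``recursive'' passage through a meta-band forces rank $\infty$ (Lemma \ref{extendinfty}, Lemma \ref{recursiveinfty}), its rank is one of $\omega,\omega+1,\omega+2$ or $\infty$; in every case $f\in\rad{\omega+2}$ implies $f\in\rad\infty$, giving $\rad{\omega+2}=\rad{\omega+3}$.

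Concretely the key steps, in order, are: (i) cite Schröer to get $\rad{\st(\Lambda)}\neq 0$ and hence $\st(\Lambda)\geq\omega$; (ii) reduce the upper bound to the claim $\rad{\omega+2}\subseteq\rad\infty$ on graph maps, using that $\rad\nu$ is an ideal generated by graph maps; (iii) dispose of type SS graph maps via Theorem \ref{EPNDmain}; (iv) for graph maps factoring through band modules, use Remark \ref{sbbbrank} to see that a single SB, BS or BB stage contributes exactly $\omega$ or $\omega+1$ to the rank, so an honest graph map in $\rad{\omega+2}$ uses at least three radical-$\omega$ stages; (v) show that three or more such stages force the map to ``go around a meta-band'', and appeal to Lemma \ref{extendinfty} (via Lemma \ref{recursiveinfty}) to conclude such a map is in $\rad\infty$; (vi) assemble: $\rad{\omega+2}=\rad{\omega+2}\cdot(\text{stuff in }\rad\infty)\subseteq\rad\infty$, hence $\st(\Lambda)\leq\omega+2$.

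The main obstacle I anticipate is step (v) — making rigorous the passage from ``a graph map whose rank exceeds $\omega+1$'' (so it genuinely passes through three band-module stages) to ``this composite realizes a loop in the bridge quiver and hence, by meta-torsion-freeness, is recursive and in $\rad\infty$''. One must be careful because a composite of three maps each of rank $\omega$ might a priori only have rank $\omega+2$ without looping; the point must be that in a meta-torsion-free algebra any band module sitting in the middle of such a composite is a vertex on a meta-band (its band is prime and lies in a connected component with $\geq 2$ vertices, which for a non-trivial factorization must be cyclic), so the relevant string passes through that band and can be prolonged around the meta-band, triggering exactly the recursive-system construction of Lemma \ref{extendinfty}. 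Pinning down that this applies to \emph{every} graph map of rank $\geq\omega+2$ (as opposed to rank exactly $\omega+2$ needing separate treatment) — i.e. that rank $\omega+2$ is genuinely attained by non-recursive maps while rank $\geq\omega+3$ is not — is the delicate combinatorial heart, and is presumably where the hypothesis that each component of the bridge quiver has $\geq 2$ vertices and the torsion-free-type extension property get used in full force.
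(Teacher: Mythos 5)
Your overall skeleton is the right one — get $\st(\Lambda)\geq\omega$ from non-domesticity and reduce the upper bound to showing $\rad{\omega+2}\subseteq\rad{\st(\Lambda)}$, using that the radical powers are controlled by graph maps, Theorem~\ref{EPNDmain} for SS maps and Remark~\ref{sbbbrank} for the band-module cases. But your step~(v) is a genuine gap, and you correctly smell trouble there: there is no need, and no clean way, to argue that ``three rank-$\omega$ stages force the composite to loop around a meta-band, hence it is recursive.'' Lemma~\ref{extendinfty} and Lemma~\ref{recursiveinfty} are not invoked at this stage in the paper at all; their work has already been absorbed into Theorem~\ref{EPNDmain}(1)$\Rightarrow$(2). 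The paper's argument is purely combinatorial on the type word. A morphism in $\rad{\omega+2}$ is a composite $M_0\to M_1\to M_2\to M_3$ with each step in $\rad\omega$, so its type is a length-$4$ word over $\{S,B\}$. If the word has at least two $S$'s, some composite subfactor is an SS morphism lying in $\rad\omega$, hence of rank $\infty$ by Theorem~\ref{EPNDmain}, and you are done by the ideal property. If the word has one or zero $S$'s, it necessarily contains a subword $BB$; the crucial move you miss is that Remark~\ref{sbbbrank} is used not merely to bound ranks but to \emph{refactor}: a BB graph map of non-finite rank factors through a finite direct sum of string modules, so the BB stage can be replaced by BS$\,\cdot\,$SB, manufacturing new $S$'s. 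After this refactoring one again produces an SS morphism lying in $\rad\omega$, and the previous case applies. That is the whole proof. You also slightly overcomplicate the target: the corollary asserts only $\st(\Lambda)\leq\omega+2$, so there is nothing to prove about rank $\omega+2$ being ``genuinely attained''; that is illustrated separately in Examples~\ref{allranks}.
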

\begin{proof}
Let $\Lambda$ be a meta-torsion-free string algebra. Since $\Lambda$ is non-domestic, we know that $\st(\Lambda)\geq\omega$. To complete the proof we will show that $\rad{\omega+2}\subseteq\rad{\st(\Lambda)}$.

A map $f$ in $\rad{\omega+2}$ can be written as the composition of at least three maps in $\rad\omega$, and thus its type is a word of length $4$ using the alphabet $\{S,B\}$.

If such a word has at least $2$ occurrences of the letter $S$ then, in view of Theorem \ref{EPNDmain}, a map of such type is in the stable radical. If the word has one occurrence of the letter $S$, then it also contains at least one occurrence of a subword of type $BB$. Since a graph map of type $BB$ whose rank is not finite must factor through a direct sum of string modules in view of Remark \ref{sbbbrank}, we observe that $f$ admits a factorization through a finite direct sum of maps of type SS whose rank is $\infty$. Similar argument holds if the word has no occurrence of $S$, as there would be two disjoint occurrences of subwords of type $BB$. Thus we can conclude that every map whose type is given by a word of length $4$ must lie in the stable radical.
\hfill\end{proof}

\begin{rmk}\label{sbrank}
If for any band $\bb$, any image substring $\vv$ of $^\infty\bb^\infty$, and any cyclic permutation $\bb'$ of $\bb$ we have $\brac1\la(\vv)\neq\prescript{\infty}{}{\bb'}\vv$ or $\brac1\ra(\vv)\neq\vv\bb'^\infty$, then a graph map of type SB corresponding to the band $\bb$ and whose associated string is $\vv$ lies in the stable radical in view of Theorem \ref{EPNDmain}.

In particular, in view of Proposition \ref{evenprim}, whenever $\bb$ is composite then such map always lies in the stable radical. Dual statements could be made about graph maps of type BS.
\end{rmk}

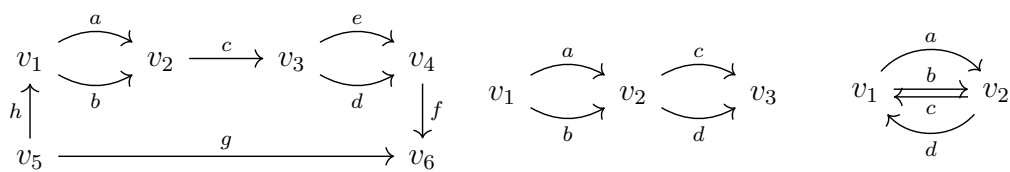
\begin{figure}[h]
\begin{minipage}[c]{0.4\textwidth}
\begin{tikzcd}
    v_1 \arrow[r, "a", bend left] \arrow[r, "b"', bend right] & v_2 \arrow[r, "c"] & v_3 \arrow[r, "e", bend left] \arrow[r, "d"', bend right] & v_4 \arrow[d, "f"] \\
    v_5 \arrow[u, "h"] \arrow[rrr, "g"]  &  &   & v_6           \end{tikzcd}
    \end{minipage}
    \begin{minipage}[c]{0.25\textwidth}
    \begin{tikzcd}
    v_1 \arrow[r, "a", bend left] \arrow[r, "b"', bend right] & v_2 \arrow[r, "c", bend left] \arrow[r, "d"', bend right] & v_3
    \end{tikzcd}
\end{minipage}
\hspace{20pt}
\begin{minipage}[c]{0.25\textwidth}
   \begin{tikzcd}
    v_1 \arrow[r, "b"] \arrow[r, "a", bend left=49] & v_2 \arrow[l, "c", shift left] \arrow[l, "d", bend left=49]
    \end{tikzcd}
\end{minipage}
\caption{Meta-torsion-free string algebras and their stable ranks \hspace{20pt}(L) $\Lambda:\rho=\{cb,dc,fd,bh\}, \st(\Lambda)=\omega$ ,(M) $\Lambda':\rho=\{cb,da\},\ \st(\Lambda')=\omega+1$, (R) $\Lambda'':\ \rho=\{cb,da,ad,bc,ac,bd\},\ \st(\Lambda'')=\omega+2$}
\label{Strkex}
\end{figure}

\begin{examples}\label{allranks}
Figure \ref{Strkex} shows examples of meta-torsion-free string algebras with stable ranks from the range obtained in Corollary \ref{mtfrk}.
\begin{enumerate}
    \item[$\Lambda:$] There are only three prime bands (up to inverse) and for each of them Remark \ref{sbrank} guarantees that the graph maps of type SB or BS, and hence those of type BB with non-finite rank, lie in the stable radical. Thus $\st(\Lambda)=\omega$. 
    
    \item[$\Lambda':$] There are only two prime bands (up to inverse), namely $\bb=cD$ and $\bb'=aB$. In view of Remark \ref{sbrank} the only graph maps of type SB which may not lie in the stable radical have underlying band $\bb$. Any image substring of $^\infty\bb^\infty$ has the form $\bb^n$ for some $n\geq0$. Note that $\bb^n$ cannot be a factor substring of any band. Moreover the canonical inclusion map $\M(\bb^n) \to \M(\bb^m)$ for $n \leq m$ is of finite rank. Hence a graph map from $\M(\bb^n)$ to $\mathrm B(\bb,n,\lambda)$ is of rank $\omega$.
    
    A dual argument for graph maps of type BS works for factor substrings of $^\infty\bb'^\infty$. Note that the maps of rank $\omega$ thus found, cannot be composed. Therefore, $\st(\Lambda') = \omega + 1$.
    
    \item[$\Lambda'':$] As in $\Lambda'$ there are only two prime bands (up to inverse), namely $\bb=cD,\bb'=aB$. As argued there, the only graph maps of rank $\omega$ are of the form $\M(\bb^n)\to\mathrm B(\bb,n,\lambda)$ and $\mathrm B(\bb',n',\lambda')\to\M((Ba)^m)$. For a suitable choice of $j$, the string $1_{(v_1,j)}$ is of the form $\bb^0$ as well as $(Ba)^0$, and this is the only such string. Therefore $\M(1_{(v_1,j)})$ must appear in any finite factorisation of the composition $\mathrm B(\bb',n',\lambda')\to\M(1_{(v_1,j)})\to\mathrm B(\bb,n,\lambda)$, which guarantees that the composition map is of rank $\omega+1$. Thus $\st(\Lambda'')=\omega+2$.
\end{enumerate}
\end{examples}

From the proof of Theorem \ref{EPNDmain} it is easy to obtain the following.
\begin{corollary}
If $\st(\Lambda) = \omega$ for a string algebra $\Lambda$ then $\Lambda$ is meta-torsion-free.
\end{corollary}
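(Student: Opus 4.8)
The plan is to obtain this as an immediate consequence of Theorem~\ref{EPNDmain}. To invoke that theorem I need to know that (a)~$\Lambda$ is non-domestic, and (b)~condition~(2) of Theorem~\ref{EPNDmain} holds, i.e.\ every graph map between finite-dimensional string modules has rank either finite or $\infty$. Once both are in hand, Theorem~\ref{EPNDmain} gives directly that $\Lambda$ is meta-torsion-free.

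Step~(b) is pure radical arithmetic. From $\st(\Lambda)=\omega$ we have $\rad{\omega}=\rad{\omega+1}=(\rad{\omega})^{2}$, and an easy induction on $n$ using $\rad{\omega+n}=(\rad{\omega})^{n+1}$ shows $\rad{\omega+n}=\rad{\omega}$ for all finite $n$; intersecting at limit stages propagates this to $\rad{\nu}=\rad{\omega}$ for every $\nu\geq\omega$, so $\rad{\omega}=\rad{\st(\Lambda)}=\rad{\infty}$ is the stable radical. Hence a graph map $f$ between finite-dimensional string modules either lies in $\rad{\omega}=\rad{\infty}$, so $\rk(f)=\infty$, or it lies outside $\rad{\omega}$, so $\rk(f)$ is a finite ordinal; this is exactly condition~(2).

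Step~(a) is where the one genuine point lies. Minimality of the stable rank forces $\rad{n}\neq\rad{n+1}$, hence $\rad{n}\neq 0$, for every finite $n$; thus the radical of $\modfp\Lambda$ is not nilpotent, so $\Lambda$ is of infinite representation type, and therefore $\rad{\infty}\neq 0$ (otherwise Auslander's theorem would force $\Lambda$ to be representation-finite, against the Harada--Sai bound). Consequently $\rad{\omega}\supseteq\rad{\infty}\neq 0$. If $\Lambda$ were domestic, Schr\"oer's theorem ($\Lambda$ domestic if and only if $\rad{\st(\Lambda)}=0$) would give $\rad{\omega}=\rad{\st(\Lambda)}=0$, a contradiction; hence $\Lambda$ is non-domestic, and Theorem~\ref{EPNDmain} applies.

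The main (indeed only) obstacle is making sure that $\st(\Lambda)=\omega$ genuinely excludes domesticity: one must rule out both the representation-finite case, where the radical is nilpotent and $\st(\Lambda)<\omega$, and the representation-infinite domestic case, where a band is present so $\rad{\omega}\neq 0$ while Schr\"oer's computation forces $\rad{\st(\Lambda)}=0$. Alternatively, one can argue the contrapositive directly, reusing the proof of Theorem~\ref{EPNDmain}: if $\Lambda$ is non-domestic but not meta-torsion-free, the construction in the implication $(1)\Leftarrow(2)$ there produces a graph map in $\rad{\omega}\setminus\rad{\omega+1}$, whence $\st(\Lambda)\geq\omega+1$; together with the domestic case handled as above this yields $\st(\Lambda)\neq\omega$ whenever $\Lambda$ fails to be meta-torsion-free.
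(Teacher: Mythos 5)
Your proposal is correct, and both of the routes you describe work, though neither is literally what the paper says (the paper just remarks that the corollary is ``easy to obtain from the proof of Theorem~\ref{EPNDmain}''). Your secondary suggestion---the contrapositive via the construction in the $(1)\Leftarrow(2)$ direction of that proof, which produces a map in $\rad{\omega}\setminus\rad{\omega+1}$ whenever a non-domestic $\Lambda$ fails to be meta-torsion-free, hence $\st(\Lambda)\geq\omega+1$---is the argument the paper almost certainly has in mind. Your primary route, invoking the \emph{statement} of Theorem~\ref{EPNDmain}, is equally valid and has the virtue of making explicit the step the paper silently elides: one must first rule out the domestic case before Theorem~\ref{EPNDmain} is applicable at all. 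Your handling of that via the radical arithmetic plus Schr\"oer's characterisation (domestic iff $\rad{\st(\Lambda)}=0$) is sound.

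One small piece of the write-up is garbled and worth tightening: the parenthetical ``therefore $\rad{\infty}\neq 0$ (otherwise Auslander's theorem would force $\Lambda$ to be representation-finite, against the Harada--Sai bound)'' conflates the two results and, as stated, uses $\rad{\infty}$ where Auslander's criterion is about $\rad{\omega}$. The clean version is: if $\Lambda$ were domestic, Schr\"oer gives $\rad{\st(\Lambda)}=\rad{\omega}=0$, so by Auslander's theorem $\Lambda$ is representation-finite, hence (Harada--Sai) $\rad{n}=0$ for some finite $n$, giving $\st(\Lambda)\leq n<\omega$, a contradiction. You do not need to pass through $\rad{\infty}$ at all for this step; $\rad{\omega}\neq 0$ suffices, and then $\rad{\omega}=\rad{\st(\Lambda)}\neq 0$ kills domesticity directly. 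With that rephrasing the argument is fully rigorous.
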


\subsection{Future directions}\label{fut}
In this paper we dealt with string algebras whose bridge quivers were essentially made of directed cycles. The other end of the spectrum, namely the class of domestic string algebras, is dealt with by the second and the third author in \cite{SK}. In a future work we hope to combine the ideas in these two papers to investigate the stable rank of an arbitrary special biserial algebra which is an algebra presented by $(\Q,\rho)$ where $\Q$ is a quiver and $\rho$ is a collection of relations that are not necessarily monomials. In particular it would be desirable to obtain the following generalization of \cite[Theorem~2]{SchroerInfRadMod}.
\begin{conj}
The stable rank, $\st(\Lambda)$, of a special biserial algebra $\Lambda$ with at least one band satisfies $\omega\leq\st(\Lambda)<\omega^2$.
\end{conj}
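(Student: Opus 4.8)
We sketch a possible line of attack. The lower bound is the easy half: since $\Lambda$ has a band $\bb$, it is representation-infinite, and for a fixed $\lambda\in k^*$ the band modules $\mathrm B(\bb,n,\lambda)$, $n\geq 1$, form a tube in the AR quiver of $\Lambda$; along a ray of this tube the composite of $n$ consecutive irreducible maps is a nonzero morphism lying in $\rad{n}\setminus\rad{n+1}$. Hence $\rad{n}\supsetneq\rad{n+1}$ for every finite $n$, so no finite ordinal can be $\st(\Lambda)$ and $\st(\Lambda)\geq\omega$.

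For the upper bound I would proceed in two stages. \emph{Stage 1 (reduction to string algebras).} Let $\Lambda'$ be the string algebra obtained from $\Lambda$ by killing the socles of its nonuniserial indecomposable projective--injective modules. By the structure theory of special biserial algebras, the indecomposable $\Lambda$-modules are exactly the indecomposable $\Lambda'$-modules (string and band modules) together with the finitely many nonuniserial projective--injective $\Lambda$-modules, and a band of $\Lambda$ is a band of $\Lambda'$. Each such projective--injective $P$ occurs in the AR quiver of $\Lambda$ with one incoming and one outgoing irreducible map, and any morphism in $\rad2$ factoring through $P$ already factors through $\operatorname{rad}P/\operatorname{soc}P$, a direct sum of string modules. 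Tracking this, one sees that the ideals $\rad\nu$ in $\modfp\Lambda$ and in $\modfp\Lambda'$ agree up to a finite shift of the exponent, so it suffices to show $\st(\Lambda')<\omega^2$.

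\emph{Stage 2 (string algebras).} If $\Lambda'$ is domestic this is \cite[Theorem~3]{SchroerInfRadMod}. Assume $\Lambda'$ is non-domestic; by Theorem \ref{finprimeband} and Proposition \ref{finbandfree} its bridge quiver is finite, say with $c$ vertices. By Remark \ref{prodli} any graph map between finite dimensional string modules is a finite composite of maps labelled by $\la_i,\ra_i,\lb_i,\rb_i$, and by Lemma \ref{generateallstrings} the associated strings are read off from walks in the (extended) bridge quiver. The plan is to establish the following dichotomy, extending Theorem \ref{infintervalinfrk} and Lemma \ref{extendinfty} beyond the meta-$\bigcup$-cyclic case: if the walk associated to a graph map $f$ between string modules traverses a directed cycle of the bridge quiver, then the relevant closed interval in a hammock poset is infinite---because such a cycle is a meta-band, hence lets one extend the associated string to a power of a genuine band and so produces a recursive term---whence $\rk(f)=\infty$ by Lemma \ref{recursiveinfty}. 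Consequently a graph map between string modules that does not lie in $\rad\infty$ has an associated walk with no repeated vertex, hence of length at most $c$; each of its at most $c$ bridge-crossings adds at most $\omega$ to the rank while the two half bridges at the ends add only finite rank, so $\rk(f)<\omega(c+1)$, i.e. $\rad{\omega(c+1)}$ and $\rad\infty$ coincide on morphisms between string modules. Finally, graph maps with a band module as source or target are absorbed exactly as in the proof of Corollary \ref{mtfrk}: a type-BB map of non-finite rank factors through string modules (Remark \ref{sbbbrank}) and types SB, BS cost at most one extra $\omega$ each, raising the bound by at most $\omega\cdot 2$; thus $\st(\Lambda')\leq\omega(c+3)<\omega^2$.

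The main obstacle is the dichotomy in Stage 2. In the domestic case the bridge quiver is acyclic and there is nothing to prove; in the meta-torsion-free and meta-$\bigcup$-cyclic cases the relevant instances are supplied by Theorems \ref{EPNDmain} and \ref{infintervalinfrk}. For an arbitrary non-domestic string algebra, however, one must understand \emph{partial} traversals of meta-bands---and of the non-prime bands they generate---and pin down exactly which of them already force a recursive term (rank $\infty$) and which merely contribute a bounded finite amount before the walk is obliged to leave the cycle. Making this bookkeeping uniform, so that the number of factors of $\omega$ in the rank of every graph map outside $\rad\infty$ is bounded by a function of the finite bridge quiver alone, is the crux; the reduction in Stage 1 should by contrast be routine once the interaction of the projective--injective modules with the powers of the radical is written out carefully.
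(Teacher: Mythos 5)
The statement you were asked to prove is explicitly labeled a \emph{conjecture} in the paper---it appears in the closing ``Future directions'' section (\S\ref{fut}) with no proof offered---so there is no argument of the authors' to compare against; what you have written is itself a program, not a proof, and you are candid that Stage~2 contains an unresolved obstacle. Let me say where the gaps are more serious than you allow.

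Your Stage~1 reduction (killing the socles of the nonuniserial projective--injectives to pass from a special biserial algebra to a string algebra) is the standard move, but the assertion that $\rad\nu$ computed in $\modfp\Lambda$ and in $\modfp\Lambda'$ ``agree up to a finite shift of the exponent'' is unproved for transfinite $\nu$; since $\rad\nu$ at limit ordinals is an intersection, a finite shift at each finite stage does not automatically propagate, and you would need an explicit argument. The more serious problem is Stage~2. Your chain is: walk traverses a directed cycle $\Rightarrow$ the hammock interval is infinite $\Rightarrow$ a recursive term exists $\Rightarrow$ rank $\infty$. The middle implication is exactly what Theorem~\ref{infintervalinfrk} provides, but that theorem is proved only under the meta-$\bigcup$-cyclic hypothesis and its conclusion \emph{fails} in general: the proof of Theorem~\ref{EPNDmain} exhibits, for any non-domestic string algebra that is not meta-torsion-free, a graph map $f$ between string modules whose associated interval in $\hat H_l(v)$ contains an infinite string (hence the interval in $H_l(v)$ is infinite), yet $\rk(f)=\omega$, not $\infty$. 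So ``infinite interval $\Rightarrow$ rank $\infty$'' is simply false outside the meta-torsion-free class, and you cannot route your dichotomy through it. Your cycle-traversal formulation might survive---the counterexample map's walk does not traverse a cycle---but then you must argue directly that a cycle traversal produces the two extendable extensions $\alpha\xx$ and $\beta\xx$ needed to set up the recursive system of Lemma~\ref{extendinfty}, and that is precisely the bookkeeping you defer. Finally, the bound $\rk(f)<\omega(c+1)$ from ``each bridge-crossing adds at most $\omega$'' is asserted, not proved; the analogous bound in the domestic case is Schr\"{o}er's theorem and requires a genuine factorization analysis, nothing of which you carry over to the acyclic part of a non-domestic bridge quiver.
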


\end{document}